\documentclass[12pt]{amsart}
\usepackage{amsmath, amssymb, amsthm}
% Addons to amsmath
\usepackage{mathtools}
% new font: Times New Roman
\usepackage{mathptmx}
% graphics, etc.
\usepackage{tikz}
% active links in the pdf
\usepackage{hyperref}
\definecolor{darkblue}{RGB}{0,0,160}
\hypersetup{
colorlinks,%
citecolor=darkblue,%
filecolor=black,%
linkcolor=darkblue,%
urlcolor=darkblue
}
% Adjust the page
\usepackage[centering,
			textwidth=15cm,
			textheight=22cm,
			top=3.5cm,
			footskip=20pt,
			marginparwidth=2.9cm
			]{geometry}
\usepackage{xspace}
\usepackage[left]{lineno}
%\usepackage{cleveref}
%\usepackage{refcheck}
%-----------------------------------------------------------
% Math Symbols
%-----------------------------------------------------------

\DeclareMathOperator{\cone}{cone}
\DeclareMathOperator{\conv}{conv}
\DeclareMathOperator{\cut}{Cut}

\DeclareMathOperator{\gp}{gp}

\DeclareMathOperator{\id}{id}
\DeclareMathOperator{\inte}{int}

\DeclareMathOperator{\modulo}{mod}
\DeclareMathOperator{\reg}{reg}
\DeclareMathOperator{\Tor}{Tor}
\def\+#1{\relax\ifmmode\if\noexpand #1\relax \mathop{\kern
    0pt^+{#1}}\nolimits\else \kern 0pt^+\!#1 \fi\else$^*$#1\fi}
\newcommand{\Kb}{\mathbb{K}}
\newcommand{\Nb}{\mathbb{N}}
\newcommand{\Rb}{\mathbb{R}}
\newcommand{\Qb}{\mathbb{Q}}
\newcommand{\Zb}{\mathbb{Z}}
\newcommand{\zerobf}{\mathbf{0}}
\newcommand{\onebf}{\mathbf{1}}
\newcommand{\twobf}{\mathbf{2}}
\newcommand{\abf}{\mathbf{a}}
\newcommand{\bbf}{\mathbf{b}}

\newcommand{\vbf}{\mathbf{v}}
\newcommand{\wbf}{\mathbf{w}}
\newcommand{\xbf}{\mathbf{x}}
\newcommand{\ybf}{\mathbf{y}}
\newcommand{\zbf}{\mathbf{z}}
\newcommand{\MC}{\textsc{MaxCut}\xspace}
%-----------------------------------------------------------
\numberwithin{equation}{section}

%-----------------------------------------------------------
\newtheoremstyle{thm}{}{}%
     {\em}%         Body font
     {}%         Indent amount (empty = no indent, \parindent = para indent)
     {\bf}% Thm head font
     {.}%        Punctuation after thm head
     {0.5em}%     Space after thm head (\newline = linebreak)
     {\thmname{#1}\thmnumber{ #2}\thmnote{ #3}}%         Thm head spec

\newtheoremstyle{def}{}{}%
     {\rm}%         Body font
     {}%         Indent amount (empty = no indent, \parindent = para indent)
     {\bf}% Thm head font
     {.}%        Punctuation after thm head
     {0.5em}%     Space after thm head (\newline = linebreak)
     {\thmname{#1}\thmnumber{ #2}\thmnote{ #3}}%         Thm head spec

\theoremstyle{thm}
\newtheorem{theorem}{Theorem}[section]
\newtheorem{lemma}[theorem]{Lemma}
\newtheorem{corollary}[theorem]{Corollary}
\newtheorem{proposition}[theorem]{Proposition}
\newtheorem{conjecture}[theorem]{Conjecture}
\newtheorem{problem}[theorem]{Problem}
\theoremstyle{def}
\newtheorem{definition}[theorem]{Definition}
\newtheorem{example}[theorem]{Example}

%-----------------------------------------------------------
% Content
%-----------------------------------------------------------
\title{Seminormality, canonical modules, and regularity of  cut polytopes}
\author{Mitra Koley}
\address{
Tata Institute of Fundamental Research,
School of Mathematics,
Mumbai, India
}
\email{mitrak@math.tifr.res.in}

\author{Tim R\"omer}
\address{
Osnabr\"uck University,
Institute of Mathematics,
Osnabr\"uck, Germany
}
\email{troemer@uos.de}

%
%\keywords{}
%\subjclass[2010]{}
%-----------------------------------------------------------
\thanks{
The first author was supported by the DAAD programme Research Stays for University Academics and Scientists in 2019, grant number 57442043.
%, and would gratefully acknowledge the support of the Osnabr\"uck University for the research visit.
}
\subjclass[2010]{
Primary~%
05E40, %Combinatorial aspects of commutative algebra
13C05; %Structure, classification theorems for modules and ideals in commutative rings
Secondary~%
14M25, % Toric varieties, Newton polyhedra, Okounkov bodies
52B20%Lattice polytopes in convex geometry (including relations with commutative algebra and algebraic geometry)
}

\begin{document}
%\linenumbers
\begin{abstract}
Motivated by a conjecture of Sturmfels and Sullivant
we study normal cut polytopes. After a brief survey of known results for normal cut polytopes it is in particular observed that
for simplicial and simple cut polytopes
their cut algebras are normal and hence Cohen--Macaulay.
Moreover, seminormality is considered. It is shown that the cut algebra of $K_5$ is not seminormal which implies again the known fact that it is not normal.
%We conjecture that
%seminormality is a minor-closed property and evidences for this statement are discussed.
For normal Gorenstein cut algebras and other cases of interest we determine their canonical modules.
The Castelnuovo--Mumford regularity of a cut algebra is
computed for various types of graphs and bounds for it are provided if normality is assumed.
As an application we classify all graphs for which the cut algebra has regularity less than or equal to 4.

\end{abstract}
\maketitle
\section{Introduction}
The \MC problem in combinatorial optimization
is one of the 21 NP-complete problems of Karp \cite{Karp}
with  interesting applications. For an overview of these we refer to \cite{Deza-Laurent-I, Deza-Laurent-II}. Closely related to this is the polyhedral point of view of the story, i.e.\ the study of the \emph{cut polytope} $\cut^{\Box}(G)$ of a graph $G$ and its geometric properties (see Section \ref{Section-2} for definitions). Facts based on cases of graphs where polynomial time algorithms for \MC are known
or other special knowledge exists
(see, e.g.,
\cite{Barahona-83, BoJuReRi, Kaminski})
% MaxCut K_5 minor-free;
% sparse graphs
% MaxCut H minor-free;
led to a larger number of beautiful results
where all or at least many
facet defining inequalities of cut polytopes
and related objects  can be explicitly described.
See
\cite{Barahona-86, ChJKNoRo, Deza-Dutour-20, Neta},
% facets K_5 minor-free.
% new facets, etc.
% new paper on relevante polytopes
% uniform cases
the book of Deza--Laurent \cite{Deza-Laurent-10}
and also
\cite{ChRe, Deza-Dutour-16} for statements on
enumerations of facets.

The connection to algebraic geometry and commutative
algebra was initiated by Sturmfels and Sullivant, who introduced the
\emph{cut algebra} $\Kb[G]$ over a field $\Kb$ and its defining \emph{cut ideal} $I_G$ for a graph $G$ (see \cite{Sturmfels-Sullivant}). Observe that $\Kb[G]$ is the toric algebra associated to the $0/1$-polytope $\cut^{\Box}(G)$. See Section \ref{Section-2} for these notations
and \cite{Bruns-Gubeladze} as a general reference on toric algebra and geometry. Note that in \cite{Sturmfels-Sullivant}
also applications of cut objects to algebraic statistics
are discussed.
From purely algebraic prospects several conjectures
of that paper are of great interest,
from which we summarize the following combined one:
\begin{conjecture}{(\cite{Sturmfels-Sullivant})}
\label{Conj-StSu}
The following statements are equivalent:
\begin{enumerate}
\item
$G$ is $K_5$-minor-free;
\item
$I_G$ is generated in degrees less than or equal to $4$;
\item
$\Kb[G]$ is Cohen--Macaulay;
\item
$\Kb[G]$ is normal.
\end{enumerate}
\end{conjecture}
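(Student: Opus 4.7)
The implication (iv) $\Rightarrow$ (iii) is immediate from Hochster's theorem, since $\Kb[G]$ is a toric ring. For the forward implications (i) $\Rightarrow$ (ii) and (i) $\Rightarrow$ (iv), the plan is to use Wagner's structure theorem: every $K_5$-minor-free graph is obtained by $k$-clique sums ($k \le 3$) of planar graphs and the Wagner graph $V_8$. One then combines this with the behavior of cut algebras under $k$-clique sums, as initiated by Sturmfels and Sullivant, to reduce to the two base cases. For planar graphs, Barahona's explicit description of $\cut^{\Box}(G)$ via odd-cycle inequalities should allow a direct verification of normality and of the degree bound on $I_G$; the finite graph $V_8$ can be handled by computer algebra.

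For the converse directions, the strategy is to use $K_5$ as the universal obstruction. The base facts needed are that $\Kb[K_5]$ has a minimal ideal generator in degree $5$ (known), $\Kb[K_5]$ is non-normal (known), and by the observation in this paper $\Kb[K_5]$ is not even seminormal. One then needs a minor-restriction principle: if $H$ is a minor of $G$ and $\Kb[H]$ fails either (ii) or (iv), then so does $\Kb[G]$. This is plausible because contracting or deleting an edge corresponds to passing to a coordinate projection or to a face of $\cut^{\Box}(G)$, and both the existence of low-degree binomial generators and the failure of normality behave well under such operations. Granting this principle, the negations of (ii) and of (iv) follow from the existence of a $K_5$ minor by taking $H = K_5$.

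The main obstacle is the implication (iii) $\Rightarrow$ (i), or equivalently, once (i) $\Leftrightarrow$ (iv) is in hand, the statement (iii) $\Rightarrow$ (iv) for cut algebras. Cohen--Macaulayness is a homological invariant that does not restrict to minors in any direct way: there is no general principle by which a failure of Cohen--Macaulayness for $\Kb[K_5]$ propagates to every $G$ containing $K_5$ as a minor, since taking minors corresponds to projections and faces on the polytope side and these operations can create or destroy depth. A plausible approach would be to study Serre's condition $S_2$ and seminormality as intermediate conditions in the toric setting specific to cut algebras, and to show that each of these already coincides with normality for $\Kb[G]$. This is, in my view, where the genuine difficulty of the conjecture lies, and explains why the paper adopts the incremental viewpoint of studying seminormality, canonical modules, and regularity---each a sharper local probe of $\Kb[G]$---rather than attacking the full conjecture directly.
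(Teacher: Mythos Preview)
The statement is a \emph{conjecture}, and the paper does not claim to prove it; it explicitly says that ``beside these facts Conjecture~\ref{Conj-StSu} is open and remains a challenging task.'' So there is no proof in the paper to compare against. What the paper does record is the status of the various implications, and on that your proposal misreads where the difficulty lies.

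You identify (iii)~$\Rightarrow$~(i) as the main obstacle. In fact this implication is already known: the paper recalls that Sturmfels and Sullivant observed that if $\Kb[G]$ is normal, Cohen--Macaulay, or satisfies $\mu(I_G)\le 4$, then $G$ must be $K_5$-minor-free. So each of (ii), (iii), (iv) implies (i). (A small correction: $\mu(I_{K_5})=6$, not $5$.) The open directions are (i)~$\Rightarrow$~(ii) and (i)~$\Rightarrow$~(iv).

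Your plan for (i)~$\Rightarrow$~(iv) via Wagner's theorem has two concrete gaps. First, Ohsugi's result (Theorem~\ref{normality-Ohsugi}(ii) in the paper) only establishes that normality of cut polytopes is preserved under $0$-, $1$-, and $2$-sums; the $3$-sum case required by Wagner's decomposition is not known, and this is not a technicality one can expect to fill in routinely. Second, even the planar base case is not settled: the paper points out that proving normality for planar triangulations is already equivalent to proving it for all $K_5$-minor-free graphs. So Barahona's facet description, while explicit, does not by itself yield normality of the associated monoid; that step is precisely the content of the conjecture. Your outline therefore reduces the problem to two subproblems each of which is as hard as the original.
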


In \cite{Sturmfels-Sullivant} it was already observed that
for (ii), (iv) it is necessary that (i) holds. By a theorem of Hochster \cite{Hochster}
one knows that (iv) implies (iii). Further partial
results are discussed in \cite{Sturmfels-Sullivant}
as well as in \cite{E, NP, Ohsugi, Romer-Madani}.
The characterizations of $\Kb[G]$ being a complete intersection in  \cite[Theorem 6.9]{Romer-Madani}
and of $\Kb[G]$ being a normal Gorenstein algebra
in \cite[Theorem 3.4]{Ohsugi-Gorenstein} yield further support for the conjecture. Beside these facts Conjecture \ref{Conj-StSu} is open and remains a challenging
task. Additionally to the mentioned work,
further properties of $\Kb[G]$
and $I_G$ were studied in particular in
\cite{Potka-Sarmiento, Sakamoto, Shibata}.
% Betti numbers of cut ideals
% quadratic GB-bases
% strong Koszulness

The main goal of this paper is to study
$\Kb[G]$ in the normal situation
and to consider algebraic properties of interest. We start in Section \ref{Section-3} with a brief survey of known facts  related to Conjecture \ref{Conj-StSu}, which deepens the previous discussion. Our first contribution is in Corollary \ref{ring-graph}
the observation that using results of \cite{Ohsugi} one can give a new short proof that for a ring graph $G$ the algebra $\Kb[G]$ is normal.  This is interesting since the first proof of this in \cite{NP} contains an error as was pointed out in \cite{Sakamoto}. We conclude Section \ref{Section-3}
by discussing examples where $G$ is an outerplanar graph
and where $\cut^{\Box}(G)$ is a simplicial or  simple polytope, respectively. We expect that further polytopal properties of $\cut^{\Box}(G)$ should provide further support for Conjecture \ref{Conj-StSu}.

A natural approach to relax the property normality
is to study seminormal algebras.
See \cite{BoEiNi, Bruns-Gubeladze, BrPiRo, GeKaRe, Hochster-Roberts, Nguyen, Nitsche,  Swan, Traverso, Yanagawa} for results related to seminormality
in commutative algebra and related areas.
In Section \ref{Section-4}, we present in Theorem \ref{Theorem-K5} a proof
that $\Kb[K_5]$ is not seminormal,
which complements the discussion in \cite[Example 7.2]{Romer-Madani} and reproves also some (computer based) facts in \cite[Table 1]{Sturmfels-Sullivant}.
If a cut algebra is seminormal, then the underlying graph has to be $K_5$-minor-free (see Corollary \ref{Cor-seminormal}).
Remarkably, using this fact and other clever arguments related to very ampleness,
Laso\'{n} and Micha{\l}ek \cite{Lason-Michalek}
were recently able to prove that $\Kb[G]$ is seminormal if and only if
it is normal. An interesting consequence of this fact is that
results of Ohsugi \cite{Ohsugi} hold in the seminormal situation
like that seminormality is a minor-closed property of the underlying graphs.

%This leads us to
%extend Conjecture \ref{Conj-StSu} and add
%that $K[G]$ is seminormal if and only if the other statements
% hold. A useful observation is
%that if $G=G_1\# G_2$ is a $0$-, $1$- or $2$-clique-sum of two graphs $G_1$ and $G_2$, then $K[G]$ is seminormal
%if and only if $K[G_i]$ is seminormal for $i=1,2$.
%%(see Theorem \ref{clique-sum-seminormality}).
%Recall that Ohsugi \cite{Ohsugi} showed that
%normality is a minor-closed property.
%This leads us to conjecture:
%\begin{conjecture}
%\label{Conj-seminormal}
%Seminormality is a minor-closed property of the underlying graphs.
%\end{conjecture}
%We prove this conjecture for special cases in Proposition \ref{Proposition-minor-closed} including neighborhood minors defined in \cite{Romer-Madani}.
%In general, this conjecture is an open challenge for the future.

For a Cohen--Macaulay algebra there is always an associated canonical module, which is of great importance for the algebra itself (see \cite{Bruns-Gubeladze, Bruns-Herzog} for details).
The canonical module of cut algebras has not been studied before. We determine this module for normal Gorenstein cut algebras of graphs in Theorem \ref{Theorem-canonical-module}
and other cases of interest.

The remaining part of this work is devoted in Section \ref{Section-6} to study
the Castelnuovo--Mumford regularity of cut algebras.
For arbitrary standard graded algebras this
is one of its key homological invariants.
First results related to this invariants
were obtained in \cite{NP} and \cite{Romer-Madani}.
See also \cite{Potka-Sarmiento} for relevant facts.
We compute the Castelnuovo--Mumford regularity
for various types of graphs including ring graphs (see Theorem \ref{regularity-ring-graph})
and
provide upper as well as lower bounds of it
for all normal cut algebras. Extending the classification in \cite{Romer-Madani} of graphs for which the cut algebra has regularity 0 or 1, we classify in Theorem \ref{Theorem-classify-reg-4} all graphs
for which their cut algebra has regularity less than or equal to 4.

Finally, we formulate some research problems in Section \ref{Section-7}.

The authors are grateful to Winfried Bruns
for inspiring discussions in general and especially related
to computations with Normaliz \cite{Bruns-Normaliz}. We thank the anonymous referees for their very helpful comments and constructive remarks on this manuscript.
%----------------------------------------------------------------
% Section 2
%----------------------------------------------------------------
\section{Preliminaries}
\label{Section-2}

In this section we recall basic facts and notation used in the following. For further details to graph theory we refer to the book of Diestel \cite{Diestel}. A general reference for monoids, polytopes, their algebras and properties of them is the book of Bruns and Gubeladze \cite{Bruns-Gubeladze}.

\subsection{Graphs}
Let $G=(V, E)$ be a graph with \emph{vertex set} $V(G)=V\neq \emptyset$ and \emph{edge set} $E(G)=E$. We always consider undirected and simple  graphs (i.e.\ without multiple edges and without loops). An edge $\{v,w\}\in E$ is also denoted by $vw$.

A graph $H$ is a \emph{subgraph} of a graph $G$ if $V(H)\subseteq V(G)$ and $E(H)\subseteq E(G)$. The subgraph is called \emph{induced} if $E(H)$
contains all possible edges $vw \in E(G)$ with $v,w\in V(H)$. In this case we also denote
$H$ by $G_W$ where $\emptyset \neq W=V(H)\subseteq V(G)$.

A \emph{cycle} $C=C_n$ \emph{of length $n$} is a graph with $n$ vertices $\{v_1,\ldots,v_n\}$ and  edges $\{v_iv_{i+1}: 1\leq i\leq n-1\}\cup \{v_nv_1\}$. A \emph{triangle} is a cycle of length $3$. A \emph{cycle of a graph} $G$ is a cycle $C$ of some length which is a subgraph of $G$. A \emph{chord} of such a cycle is an edge $vw\in E(G)\setminus E(C)$ with $v,w \in V(C)$. A cycle of a graph is induced if and only if it is a cycle without chords. A graph is said to be \emph{chordal} if all its induced cycles are triangles. An edge $e$ of a graph is called a \emph{bridge} if there does not exist a cycle which contains $e$.

A \emph{complete} graph with $n$ vertices
is a graph $G$ with $|V(G)|=n$ and   $E(G)=\{vw: v,w\in V(G),\ v\neq w\}$. We denote this graph by $K_n$. An
induced complete subgraph of a graph $G$ is also called a \emph{clique} of it. A graph $G$ is a \emph{bipartite} graph if there exists a partition $V(G)= V_1\cup V_2$  with $V_1\cap V_2=\emptyset$ such that any edge of $G$ is of the type $vw$ with $v\in V_1$, $w\in V_2$. Recall that $G$ is bipartite if and only if $G$ has no cycle of odd length. A \emph{complete bipartite graph} is a bipartite graph $G$ as above where  any $v\in V_1$, $w\in V_2$ yield an edge $vw \in E(G)$.
If $|V_1|=m$, $|V_2|=n$, then
this type of graph is also denoted by $K_{m,n}$.

\emph{Edge deletion} and \emph{edge contraction}
of a graph $G$ at an edge $e\in E(G)$
are denoted by $G\setminus e$ and $G/e$.
A graph is said to be a \emph{minor} of $G$ if it can be obtained from it by a sequence of edge deletions and edge contractions. Recall that if $K_n$ is a minor of $G$, then $K_n$ can be obtained from $G$ by using  only edge contractions. By Kuratowski's Theorem a graph is \emph{planar} if and only if it is $K_5$- and $K_{3,3}$-minor-free (after removing isolated vertices). Maximal planar graphs are called \emph{triangulations}. An
\emph{outerplanar} graph is a graph which has a planar drawing such that all vertices belong to the outer face of that drawing.

Let $G_1$ and $G_2$ be two graphs such that $H=(G_1)_{V(G_1)\cap V(G_2)}=(G_2)_{V(G_1)\cap V(G_2)}$. The new graph $G_1\# G_2=G_1\#_H G_2$ with vertex set $V(G_1)\cup V(G_2)$ and edge set $E(G_1)\cup E(G_2)$ is called the \emph{$H$-sum} of $G_1$ and $G_2$. If
$H=K_{n+1}$, then $G_1\#G_2=G_1\#_{K_{n+1}}G_2$ is called an \emph{$n$-clique-sum} or simply an \emph{$n$-sum} of graphs.

\subsection{Polytopes and Cones}
A \emph{hyperplane} $H=H_{\abf,b}=\{\xbf\in \Rb^d: \abf\cdot \xbf-b=0\}$,
where $\abf\in \Rb^d$, $b\in \Rb$, induces two (closed) half-spaces $H^+$ and $H^-$ defined by $\abf\cdot\xbf-b\geq 0$ and $\abf\cdot \xbf-b\leq 0$. \emph{Polyhedra} are intersections of finitely many half-spaces. Thus, such a polyhedron $P$ is the set of solutions of a system of inequalities, which we write as $A\xbf \leq \mathbf{b}$ where $A$ is a real-valued $n\times d$-matrix for some $n\in \Nb$ , $\xbf\in \Rb^d$ and $\bbf\in  \Rb^n$.
By Weyl--Minkowski, bounded polyhedra are exactly the \emph{polytopes},
i.e.\ convex hulls $\conv(\xbf_1,\dots,\xbf_m)$ of finitely many points in $\xbf_1,\dots,\xbf_m\in\Rb^d$. If a polyhedron $P$ is defined by linear inequalities, i.e.\ $\mathbf{b}=0$,
then we obtain all finitely generated (convex) \emph{cones} of the form $\cone(\ybf_1,\dots,\ybf_m)=\Rb_{\geq 0}\ybf_1+\dots+\Rb_{\geq 0}\ybf_m$ for $\ybf_1,\dots,\ybf_m\in\Rb^d$.

Let $P$ be a polyhedron.
Its \emph{dimension} $\dim P$ is the dimension of the affine hull of $P$.
A hyperplane $H=H_{\abf,b}$ is said to be a \emph{support hyperplane} of $P$ if all  $\xbf\in P$ satisfy $\abf\cdot\xbf\leq b$ and $H \cap P \neq \emptyset$.
The intersection $F = H \cap P$ is called a \emph{face} of $P$ and $H$ is also said to be a \emph{support hyperplane} associated with $F$. Observe that here $P$ is also a face of itself.
Note that faces of polytopes are again polytopes
and faces of cones are cones.
Faces of dimension $0$ and  $1$ are called \emph{vertices} and \emph{edges}, respectively. A face of dimension $\dim P -1$ is a \emph{facet} of $P$. Then the inequality $\abf\cdot\xbf\leq b$ of the support hyperplane is also said to be \emph{facet defining}.

We introduce further notation of polytopes. A polytope of dimension $d$ is a  \emph{simplex} if it is  the convex hull of $d + 1$ affinely independent points. It is \emph{simple} if each vertex of it is contained in exactly $d$ facets. We say that a polytope is \emph{simplicial} if each facet is a simplex. A polytope $P$ is a \emph{lattice polytope}
if its vertices are \emph{lattice points} of the integral lattice $\Zb^d\subseteq \Rb^d$. The set of all lattice points in $P\cap \Zb^d$ of $P$ is written as $L_P$.

\subsection{Monoids and their algebras}

All \emph{monoids} in this paper are commutative, written additively and contain a neutral element. Let $\gp(M)$ be the abelian group induced by $M$. A subset $I$ of $M$ is called an \emph{ideal} of $M$ if $M + I \subseteq I$.

A monoid  $M$ is \emph{affine} if it is finitely generated and isomorphic to a submonoid of some $\Zb^n$.
Let $M\subseteq \Zb^n$ be an affine monoid.  The  (relative) \emph{interior} of $M$ is defined by $\inte(M) = M \cap \inte(\cone(M))$, which is an ideal of $M$ and where $\inte(\cone(M))$ denotes the (relative) interior of the conical hull $\cone(M)$ of $M$. For a face $F$ of $\cone(M)$ the set $F\cap M$ is a submonoid of $M$, which is called a \emph{face} of $M$.

Given a field $\Kb$ one can associate to a monoid $M$ a $\Kb$-algebra $\Kb[M]$, called its \emph{monoid algebra}, as follows. As a vector space $\Kb[M]$ is free with a basis consisting of the symbols $X^a$, $a \in  M$. The elements $X^a$ are called the \emph{monomials} of $\Kb[M]$. The multiplication on $\Kb[M]$ is defined on the monomials by $X^aX^b = X^{a+b}$ and this extends linearly to all element of $\Kb[M]$.
The \emph{normalization} of a monoid $M$, denoted by $\overline{M}$, is the set
%\[
$
\overline{M}=\{x \in\gp(M): nx\in M \text{ for some nonzero } n\in \Nb\}.
$
%\]
Note that $\overline{M}$ is again a monoid. An $x\in \overline{M}$ is also said to be \emph{integral} over $M$. We say that $M$ is \emph{normal} if $M=\overline{M}$. The algebra $\Kb[M]$ is normal if and only if $M$ is a normal monoid.

Let $P\subseteq \Rb^d$ be a lattice polytope. The \emph{polytopal monoid} $M_P$ associated with $P$ is the submonoid of $\Rb^{d+1}$ generated by
$(\xbf, 1)$ for $\xbf\in L_P$.  As $L_P$ is finite, $M_P$ is affine. Observe also that for a lattice polytope $P$, the monoid algebra $\Kb[M_P]$, associated to the polytopal  monoid $M_P$,
equals to its \emph{polytopal algebra} $\Kb[P]$ defined as
\[
\Kb[P] = \Kb[\ybf^{\abf}z : \mathbf{a}\in P\cap \Zb^d].
\]
Here $\ybf^{\abf} = y^{a_1}_1\cdot \ldots\cdot y^{a_d}_d$ for
$\abf = (a_1,\cdots, a_d)\in \Zb^d$ and $z$ is another variable. If $P\subseteq \Rb^d_{\geq 0}$, then
this algebra is a $\Kb$-subalgebra of $\Kb[y_1, \cdots, y_d, z]$. It is standard graded induced by setting $\deg(z)=1$
and $\deg(y_i)=0$ for $i=1,\dots,d$. Observe that $\Kb[P]$ is a toric algebra.
%whose generators correspond to all lattice points of $P$.

\subsection{Cut polytopes and their algebras}
Let $G = (V ,E)$ be a graph. Vectors in $\Rb^E$
are written as $\xbf=(x_e)_{e\in E}$. For any subset $A\subseteq V$ we define its \emph{cut set} as $\cut(A)=\{e\in E: |A\cap e|=1\}$ and its \emph{cut vector} $\delta_A=(\delta_{A,e})_{e\in E} \in  \Rb^{E}$ by
\[
\delta_{A,e}
=
\begin{cases}
1&\text{if } e\in \cut(A),
\\
0&\text{otherwise}.
\end{cases}
\]
We see that cut sets correspond one-to-one to
cut vectors. Note that $\delta_A=\delta_{A^c}$
where $A^c=V\setminus A$.
The \emph{cut polytope} of $G$, denoted by $\cut^{\Box}(G)$, is the convex hull of all $\delta_A$ for $A\subseteq V$. This polytope is full-dimensional
and has at most $2^{|V|-1}$ many vertices with equality if $G$ is connected. Setting $A=\emptyset$ yields that the zero vector is always a vertex.

In general a description of all facet defining inequalities of cut polytopes is not known and one of the main open problems of the field is to get insights to these (see, e.g., \cite{Deza-Laurent-10}). However, for some interesting special cases one can describe all facets. For later use we recall the following
consequence of results of Barahona \cite{Barahona-86} (see also \cite[Theorem 2.3]{BoJuReRi} and \cite[Proposition 2.1]{Ohsugi}).
\begin{proposition}{(\cite[Section 3]{Barahona-86})}
\label{Prop-facets-K5-minor-free}
Let $G$ be a $K_5$-minor-free graph.
Then $\cut^{\Box}(G)$ is the solution set
of the following system of (valid) inequalities:
\begin{eqnarray*}
0\leq x_e &\leq& 1, \hspace{2pt} \text{ where } e\in E(G) \text{ does not belong to a triangle},
\\
\sum_{f\in F}x_f-
\sum_{e\in E(C)\setminus F}
x_e
&\leq& |F |-1,
\end{eqnarray*}
where $C$ ranges over all induced cycles of $G$ and $F\subseteq E(C)$ with $|F|$ odd.
Moreover, these inequalities define all facets of $\cut^{\Box}(G)$.
\end{proposition}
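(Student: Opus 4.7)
The plan is to argue validity of the listed inequalities first, and then establish completeness using the Wagner decomposition of $K_5$-minor-free graphs.

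The bound constraints $0 \leq x_e \leq 1$ are immediate for cut vectors. For an odd-cycle inequality attached to a cycle $C$ and $F \subseteq E(C)$ of odd cardinality, the key observation is that $|\cut(A) \cap E(C)|$ is always even, since traversing $C$ crosses the cut an even number of times. Writing $a = |F \cap \cut(A)|$ and $b = |(E(C)\setminus F)\cap \cut(A)|$, the left-hand side evaluated at $\delta_A$ equals $a - b$, which has the same parity as $a + b = |E(C)\cap \cut(A)|$ and is therefore even. Since $a - b \leq |F|$ trivially and equality would force $|E(C)\cap \cut(A)| = |F|$ to be odd, we conclude $a - b \leq |F| - 1$.

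For completeness I would invoke Wagner's theorem: every $K_5$-minor-free graph is built from planar graphs and the Wagner graph $V_8$ by iterated $k$-clique-sums with $k \leq 3$. Together with the companion observation of Barahona--Mahjoub that the facet-defining inequalities of $\cut^{\Box}(G_1 \# G_2)$ are precisely those inherited, under the natural identification along the common clique, from facets of $\cut^{\Box}(G_1)$ and $\cut^{\Box}(G_2)$, this reduces the claim to two base cases. For planar $G$ I would use planar duality: $\cut^{\Box}(G)$ is affinely equivalent to the $T$-cut polyhedron of the planar dual $G^*$, and the Edmonds--Johnson description of $T$-cut polyhedra via odd-edge-set inequalities translates exactly into odd-cycle inequalities of $G$. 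The passage from arbitrary cycles to \emph{induced} ones is a routine chord-elimination: an inequality coming from a cycle with a chord $e$ is implied by the inequalities of the two shorter cycles obtained by cutting at $e$, together with $0 \leq x_e \leq 1$. For $V_8$ the description can be confirmed by direct polyhedral computation exploiting its vertex-transitive symmetry.

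Facet-definingness of each listed inequality would then be checked by producing $\dim \cut^{\Box}(G) = |E(G)|$ affinely independent cut vectors at which it is tight: one saturates the inequality on the relevant cycle $C$ and varies cuts freely on the rest of $G$. The main obstacle will be the planar base case, where correctly matching cuts of $G$ with $T$-cuts of $G^*$ and isolating the non-redundant odd-cycle inequalities requires careful handling of planar duality; the clique-sum induction and the chord-elimination reduction afterwards are essentially bookkeeping.
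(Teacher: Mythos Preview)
The paper does not give its own proof of this proposition; it is recorded as a known result and attributed to Barahona--Mahjoub \cite{Barahona-86}. Your outline is essentially the original argument from that source: Wagner's structure theorem reduces a $K_5$-minor-free graph to clique-sums (over cliques of size at most~$3$) of planar graphs and copies of the Wagner graph $V_8$; the Barahona--Mahjoub lemma on how facets of $\cut^{\Box}(G_1\#G_2)$ arise from those of the summands then reduces completeness to the two base cases, of which the planar one is handled via the Edmonds--Johnson/Seymour description of the $T$-join polyhedron of the planar dual, and $V_8$ by direct inspection. Your validity argument and the chord-elimination reducing arbitrary cycles to induced ones are both correct as stated.

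One comment on the last paragraph: establishing that every listed inequality is facet-defining by exhibiting $|E(G)|$ affinely independent tight cuts directly in an arbitrary $K_5$-minor-free $G$ is not really ``bookkeeping''; in practice this is most cleanly obtained from the same clique-sum induction (facets of the summands lift to facets of the sum), together with the base-case verifications, rather than by an independent global construction. Framing it that way would tighten the sketch and avoid duplicating work already implicit in the decomposition argument.
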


The affine  monoid $M_G$ associated to $\cut^{\Box}(G)$ is generated by $(\delta_A, 1)$ for  $A\subseteq V$.
As noted in \cite[(1)]{Ohsugi}
it follows from \cite[Page 258]{Laurent}
that for $\xbf\in \Zb^{E}$ and $\alpha\in \Zb$
we have that
\begin{equation}
\label{elt-in-group}
(\xbf,\alpha) \in\gp(M_G)
\textrm{ if and only if }
\sum_{e\in C} x_e \equiv 0
\modulo 2,
\end{equation}
where $C$ ranges over all cycles of $G$.
The following consequence of Proposition \ref{Prop-facets-K5-minor-free} was observed in \cite[Corollary 2.2]{Ohsugi}: let $G$ be a $K_5$-minor-free graph, $\xbf\in \Qb^{E}$
and $\alpha\in \Zb_{\geq 0}$.
Then $(\xbf, \alpha) \in \Qb_{\geq 0} M_G$
if and only if
\begin{eqnarray}
\label{InEq-facets1}
0\leq\ x_e &\leq& \alpha, \hspace{2pt} \text{ where } e\in E(G) \text{ does not belong to a triangle},
\\
\label{InEq-facets2}
\sum_{f\in F}x_f-
\sum_{e\in E(C)\setminus F}
x_e
&\leq& \alpha(|F |-1),
\end{eqnarray}
where $C$ ranges over all induced cycles of $G$ and $F\subseteq E(C)$ with $|F|$ odd.
Note that $(\xbf, \alpha)$ is an element in the interior of $\cone(M_G)$ if and only if all inequalities are strictly satisfied.

The polytopal algebra associated to $\cut^{\Box}(G)$ is
called the \emph{cut algebra} of $G$
and denoted in the following by $\Kb[G]$ for a given field $\Kb$. Observe that
$\cut^{\Box}(G)$ has dimension $|E|$ and thus $\dim\Kb[G]=|E|+1$. Note that the definition
of this algebra in \cite{Sturmfels-Sullivant}
is equivalent to the one given here by \cite[Lemma 4.2]{Romer-Madani}. For a presentation
choose a polynomial ring
\[
S_G=\Kb[q_{A}: A\subseteq V]
\]
where we set $q_{A}=q_{A^c}$. Thus, this ring
is generated only by $2^{|V|-1}$ many variables.
Consider the following surjective
$\Kb$-algebra homomorphism:
\[
\varphi_G \colon S_G \to \Kb[G],\
q_{A} \mapsto \ybf^{\delta_A}z
.
\]
The kernel $I_G$ of $\varphi_{G}$ is a graded ideal
which is  called the \emph{cut ideal} of $G$.
It is a toric ideal and thus generated by (pure) binomials.
Recall
that many algebraic properties of interest of $\Kb[G]$
can be characterized by corresponding monoidal properties of $M_G$ and vice versa.

\section{Normality of cut polytopes}
\label{Section-3}

In this section we present a brief survey of known results related to Conjecture \ref{Conj-StSu}.
Also some new contributions to this conjecture are discussed. In the following we say
that a cut polytope satisfies some algebraic
property if its cut algebra has this property.
For a graph $G$ let $\mu(I_G)$ be the maximal
degree of a minimal (binomial) generator of $I_G$.

It is known that the cut polytope of $K_5$ is not normal, not Cohen--Macaulay and $\mu(I_{K_5})=6$
(see \cite[Table 1]{Sturmfels-Sullivant} for a computational approach and, e.g., \cite[Example 7.2]{Romer-Madani} for a proof).

In \cite[Pages 699--700]{Sturmfels-Sullivant} it was observed that if $\Kb[G]$ is normal, Cohen--Macaulay or
$\mu(I_G)\leq 4$, then the given graph $G$ has to be $K_5$-minor-free (see also \cite[Sections 4--5]{Romer-Madani} for additional remarks).
Here we concentrate on the Cohen--Macaulay part of the conjecture. As mentioned in the introduction, by Hochster's result on affine monoid rings normality
implies Cohen--Macaulayness for cut algebras.
Hence, one key part of the conjecture is to show
that cut algebras of $K_5$-minor-free graphs are normal. Note that in \cite{Sturmfels-Sullivant}  Conjecture \ref{Conj-StSu} is verified computationally for graphs with up to $6$  vertices. In \cite[Theorem 3.2]{sullivant} Sullivant proved that $\cut^{\Box}(G)$ is a compressed polytope if and only if $G$ is $K_5$-minor-free and has no induced cycle of length greater than $4$. One should note that Sullivant's notion of compressed refers to general affine lattices and not just
to $\mathbb{Z}^r$. This differs from the definition of compressed polytopes that has also appeared in the literature. It is known that compressed polytopes are normal, which supports further Conjecture \ref{Conj-StSu} in this case. In particular, this can be applied to $K_5$-minor-free chordal graphs.
Ohsugi proved in \cite[Corollary 2.4 and Theorem 3.2]{Ohsugi} the following two important results with respect to normality of cut polytopes.

\begin{theorem}[(\cite{Ohsugi})]
\label{normality-Ohsugi}
Let $G$ be a graph.
\begin{enumerate}
\item
Let $H$ be a minor of $G$. If $\cut^{\Box}(G)$ is normal, then $\cut^{\Box}(H)$ is normal.
Thus, normality is a minor-closed property.
\item
Let $G=G_1 \#G_2$ be a $0$-, $1$- or $2$-sum of $G_1$ and $G_2$. Then the cut polytope
$\cut^{\Box}(G)$ is normal if and only if the cut polytopes $\cut^{\Box}(G_1)$ and
$\cut^{\Box}(G_2)$ are normal.
\end{enumerate}
\end{theorem}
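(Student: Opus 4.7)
For part (i), my plan is to reduce to single-edge operations, since every minor arises from a finite sequence of edge deletions and contractions. Edge contraction $G\to G/e$ is a face operation: a subset $A\subseteq V(G)$ satisfies $\delta_{A,e}=0$ exactly when both endpoints of $e$ lie in $A$ or both outside, which is precisely the condition for $A$ to descend to a subset of $V(G/e)$. Hence $\cut^{\Box}(G/e)$ is affinely isomorphic to the face of $\cut^{\Box}(G)$ cut out by $x_e=0$, and $M_{G/e}$ is the corresponding face submonoid of $M_G$. Since faces of normal affine monoids are again normal, contraction preserves normality.

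Edge deletion $G\to G\setminus e$ is the harder case: it corresponds to the coordinate projection $\pi\colon\Rb^{E(G)}\to\Rb^{E(G)\setminus e}$, which is not a face map. Given $(\xbf',\alpha)\in\overline{M_{G\setminus e}}$, the plan is to lift it to some $(\xbf,\alpha)\in M_G$ by selecting an appropriate integer $x_e$ and then projecting back. The parity characterization \eqref{elt-in-group} applied to cycles of $G$ through $e$ determines $x_e$ modulo $2$, while the linear inequalities defining $\cone(M_G)$ that involve $x_e$ impose integer upper and lower bounds on it. The main step is to show that some integer $x_e$ of the correct parity satisfying all these bounds exists; normality of $M_G$ then yields a decomposition of $(\xbf,\alpha)\in M_G$ whose projection is the desired decomposition of $(\xbf',\alpha)$.

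For part (ii), the ``only if'' direction is immediate from (i), since both $G_1$ and $G_2$ are minors of $G=G_1\#G_2$. For the converse I would treat the three sum types in order of increasing complexity. The $0$-sum case (gluing at a single vertex $v$) is easiest: after using $\delta_A=\delta_{A^c}$ to fix $v$ on a chosen side, cut vectors of $G$ biject with pairs of cut vectors of $G_1$ and $G_2$, so $\cut^{\Box}(G)$ is affinely isomorphic to the Cartesian product $\cut^{\Box}(G_1)\times\cut^{\Box}(G_2)$, and Cartesian products of normal lattice polytopes are normal.

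The $1$- and $2$-sum cases, where the gluing is along a common edge or triangle, are the main obstacles. Gluing along a common clique $H=K_{n+1}$ enforces that the cut values on the edges of $H$ agree on both sides, so $M_G$ embeds in $M_{G_1}\oplus M_{G_2}$ as the sub-monoid cut out by matching the coordinates on $E(H)$. Given $(\xbf,\alpha)\in\overline{M_G}$, I would restrict to each side to obtain $(\xbf_i,\alpha)\in\overline{M_{G_i}}$, apply normality of $M_{G_i}$ to decompose each as $\sum_{j=1}^{\alpha}(\delta_{A_{i,j}},1)$, and then re-pair the cuts of $G_1$ and $G_2$ so that each matched pair $(A_{1,j},A_{2,\sigma(j)})$ has compatible restriction to $V(H)$. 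A counting argument shows that the number of cuts with any prescribed trace on $V(H)$ (up to complement) coincides on the two sides, so a matching exists; using the involution $A\mapsto A^c$ on one summand then further aligns the placements of the vertices of $H$ where needed. Carrying out this recombination precisely, so that the paired cuts glue to genuine cuts $A_j\subseteq V(G)$ satisfying $\delta_A=\sum\delta_{A_j}$, is where the main combinatorial work lies.
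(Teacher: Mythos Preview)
The paper does not reprove (i) or the ``if'' direction of (ii); both are cited from \cite{Ohsugi}. The only argument the paper supplies is a repair of the ``only if'' direction of (ii), since Ohsugi's original proof there rested on \cite[Lemma~3.2(1)]{Sturmfels-Sullivant}, which is incorrect. Your route for that direction---invoke (i)---is valid once (i) is available, but the paper's fix is more economical: it observes that $G_1$ and $G_2$ are obtainable from $G_1\#G_2$ using \emph{edge contractions alone} (followed by removal of multiple edges, loops, and isolated vertices, none of which affect the cut polytope), and then applies \cite[Lemma~3.2(2)]{Sturmfels-Sullivant}. That lemma is exactly your face argument for contraction, so the paper's point is that the ``only if'' direction needs nothing beyond the easy half of (i); the edge-deletion case is irrelevant here.

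On your edge-deletion sketch for (i): the step you flag as ``the main step'' is the entire content, and your framing does not obviously resolve it. You speak of the facet inequalities of $\cone(M_G)$ imposing integer bounds on $x_e$, but for general $G$ no facet description is available; all you know abstractly is that the fiber over $(\xbf',\alpha)$ in $\cone(M_G)$ is a nonempty closed interval, with no a priori control on its endpoints or length. Producing an integer of the required parity inside that interval is the substance of Ohsugi's argument and does not fall out of the setup you describe.

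Your plan for the ``if'' direction of (ii) is in the spirit of Ohsugi's proof. In particular, the counting argument for the $2$-sum succeeds because the four cut vectors of a triangle are affinely independent: the multiset of restrictions $\bigl(\delta_{A_{i,j},e}\bigr)_{e\in E(K_3)}$ is uniquely determined by $(x_{e_1},x_{e_2},x_{e_3},\alpha)$ and hence coincides on the two sides, so a matching exists.
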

In \cite[Theorem 3.2]{Ohsugi} the proof of ``the only if'' direction of (ii) relies on \cite[Lemma 3.2(1)]{Sturmfels-Sullivant} which is not correct
as was pointed out in \cite{Romer-Madani}.
Note that all other results in \cite{Ohsugi} use only the ``if part'' of (ii). But this can easily be repaired. In (ii) the graphs $G_1$, $G_2$ are minors of $G_1 \#G_2$, which can be obtained by using only edge contractions (and removing possible multiple edges, loops and isolated vertices, which all do not change cut polytopes). Then one can apply \cite[Lemma 3.2(2)]{Sturmfels-Sullivant} to conclude the proof of (ii).
The following proposition summarizes further related and important results:

\begin{proposition}
\label{known-cases-conjecture}
Let $G$ be a graph.
 \begin{enumerate}
  \item (\cite[Theorem 3.8]{Ohsugi})
  If $G$ is $K_5\setminus e$-minor-free, then
  $\cut^{\Box}(G)$ is normal.
  \item (\cite[Corollary 2.8]{E})
  The cut ideal $I_G$ is generated in degrees less than or equal to $2$ if and only if $G$ is $K_4$-minor-free.
  \item (\cite[Section 4]{KNP})
  Suppose that G has a universal vertex. Then $I_G$ is generated in degrees
less than or equal to $4$ if and only if G is $K_5$-minor-free.
 \end{enumerate}

\end{proposition}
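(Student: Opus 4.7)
The statement collects three results proved in three different sources, so my plan is to sketch three separate approaches; in each case the sum-decomposition and minor-closedness recorded in Theorem \ref{normality-Ohsugi} play a central role.

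For (i), the plan is to exploit a structural decomposition of $K_5 \setminus e$-minor-free graphs: such graphs are obtained by iterated $0$-, $1$- and $2$-clique sums from planar graphs (and possibly a short explicit list of exceptional pieces arising, e.g., from a Wagner-type theorem). By Theorem \ref{normality-Ohsugi}(ii) normality is preserved under these sums, so the problem reduces to verifying normality on each building block. For a building block $G$ I would combine the facet description of Proposition \ref{Prop-facets-K5-minor-free} with \eqref{elt-in-group} to show that every $(\xbf,\alpha) \in \gp(M_G) \cap \cone(M_G)$ is an honest nonnegative integer combination of the generators $(\delta_A,1)$. The main obstacle is controlling the exceptional pieces, for which one likely has to do a direct polytopal computation; the planar pieces themselves can be handled by induction on the number of edges inside a cycle.

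For (ii), I would use that $K_4$-minor-free graphs are exactly the series-parallel graphs, and hence built from $K_2$ by iterated $0$-, $1$- and $2$-clique sums. The central lemma to prove is the sum bound
\[
\mu(I_{G_1 \#_H G_2}) \leq \max\{\mu(I_{G_1}),\ \mu(I_{G_2}),\ 2\}
\]
whenever $H$ is a clique of size at most two. Combined with the base cases $\mu(I_{K_2}) = 0$ and $\mu(I_{C_n}) = 2$ (the latter being a direct toric calculation using Proposition \ref{Prop-facets-K5-minor-free}), this yields the ``if'' direction. For the converse I would show that $I_{K_4}$ has a minimal generator of degree $3$ and invoke the minor-closedness of the degree bound on $\mu(I_G)$, an instance of \cite[Lemma 3.2]{Sturmfels-Sullivant}. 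The hard part is the sum bound on $\mu$: one must argue that any binomial relation in $\ker \varphi_{G_1 \# G_2}$ that uses variables indexed by cuts of both sides can be rewritten, modulo degree-$\leq 2$ binomials supported on the clique $H$, as a combination of relations coming individually from $G_1$ and from $G_2$.

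For (iii), the strategy is to reduce to (ii). The implication ``$\mu(I_G) \leq 4 \Rightarrow G$ is $K_5$-minor-free'' is already recorded in the excerpt and uses no universal-vertex hypothesis. Conversely, suppose $v$ is universal in $G$ and $G$ is $K_5$-minor-free; then $G - v$ must be $K_4$-minor-free, since otherwise reattaching $v$ would supply a $K_5$-minor. By (ii), $I_{G - v}$ is generated in degrees $\leq 2$. I would then relate $I_G$ to $I_{G - v}$ by partitioning the cut variables $q_A$ according to whether $v \in A$ or not; using the involution $q_A = q_{A^c}$ one can normalize to $v \in A$ and view the remaining variables as a ``doubled'' copy of the cut variables of $G - v$. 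Each degree-$2$ generator of $I_{G - v}$ then lifts to one or two binomials of degree at most $4$ in $I_G$, and one must add a family of low-degree relations expressing that the two copies are compatible with the cut vectors on the edges incident to $v$. The main obstacle is showing that these lifts together with the compatibility relations generate $I_G$; I would attempt this via a Gröbner-basis argument, as is carried out in \cite{KNP}.
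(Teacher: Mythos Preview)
The paper does not prove Proposition~\ref{known-cases-conjecture} at all: it is stated purely as a summary of results from \cite{Ohsugi}, \cite{E}, and \cite{KNP}, with no argument given in the present paper. So there is nothing to compare your proposal against here; your sketches are attempts to reconstruct the cited proofs, not to reproduce anything the authors do.

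That said, a couple of points in your sketches would not go through as written. In~(ii), your converse relies on exhibiting a minimal generator of $I_{K_4}$ in degree~$3$; in fact $\mu(I_{K_4})=4$, not~$3$ (see, e.g., \cite[Table~1]{Sturmfels-Sullivant} or \cite[Table~1]{Romer-Madani}). This still gives $\mu(I_{K_4})>2$, which is all you need, but the specific claim is wrong. More seriously, the sum bound you propose,
\[
\mu(I_{G_1 \#_H G_2}) \leq \max\{\mu(I_{G_1}),\ \mu(I_{G_2}),\ 2\},
\]
is strictly stronger than the bound $\max\{\mu(I_{G_1}),\mu(I_{G_2}),4\}$ of \cite[Theorem~2.1]{Sturmfels-Sullivant} for $2$-sums, and since $K_4$-minor-free graphs genuinely require $2$-sums in their decomposition, you cannot avoid this. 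Proving the sharper bound with~$2$ is essentially the content of Engstr\"om's argument in \cite{E}, so your ``central lemma'' is the whole theorem rather than a stepping stone. For~(i) and~(iii) your outlines are broadly in the spirit of \cite{Ohsugi} and \cite{KNP}, respectively, though in~(i) the exceptional pieces in the Wagner-type decomposition (notably the prism $K_{3,3}$ and related graphs) require genuine work that your sketch defers.
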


An interesting class of graphs are ring graphs
defined as follows.

Recall that a \emph{cut vertex}
$v\in V(G)$ of a graph $G$ is a vertex
such that induced subgraph on $V(G)\setminus v$
has more connected components than $G$.
A \emph{block} of $G$ is a maximal connected subgraph
of $G$ without cut vertices.
Then $G$ is called a \emph{ring graph} if
any block of $G$ which is not a bridge or a vertex
can be constructed from a cycle by
adding cycles using the operation of taking $1$-sums.
One can see that ring graphs are exactly those
graphs which one can obtain (up to isolated vertices) from trees and cycles
using $0$- or $1$-sums. Moreover, ring graphs are
$K_4$-minor-free. See \cite{NP} for some further details
and their study related to cut objects.
In the latter paper it is stated that cut algebras of ring graphs are Cohen--Macaulay (see \cite[Theorem 6.2]{NP}).
As was pointed out in \cite{Sakamoto} this proof contains a mistake which makes it interesting to give an
alternative argument. Using Theorem \ref{normality-Ohsugi} this is an easy task:

\begin{corollary}
\label{ring-graph}
Let $G$ be a ring graph. Then $\cut^{\Box}(G)$ is normal.
\end{corollary}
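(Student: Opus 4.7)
The plan is to induct on the structural decomposition of a ring graph, using Theorem \ref{normality-Ohsugi}(ii) as the gluing principle. By definition, a ring graph $G$ is obtained, up to isolated vertices (which do not affect $\cut^{\Box}$ since they contribute no edges), from a collection of trees and cycles by iterated $0$- and $1$-sums. Since Theorem \ref{normality-Ohsugi}(ii) guarantees that normality of the cut polytope is preserved under $0$- and $1$-sums, it suffices to verify normality of $\cut^{\Box}(G)$ when $G$ is a tree or a single cycle.

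For a tree $T$, apply the same inductive principle once more: any tree with $n$ edges is built up from $n$ single edges by a sequence of $0$- and $1$-sums (a disjoint union of two trees is a $0$-sum, and gluing a pendant edge to a vertex is a $1$-sum). The cut polytope of a single edge is the segment $\conv(0,1)\subseteq\Rb$, whose associated monoid is obviously normal. Applying Theorem \ref{normality-Ohsugi}(ii) iteratively, $\cut^{\Box}(T)$ is normal for every tree $T$. For a cycle $C_n$, I would observe that $C_n$ is $K_4$-minor-free, hence in particular $K_5\setminus e$-minor-free, so Proposition \ref{known-cases-conjecture}(i) immediately yields that $\cut^{\Box}(C_n)$ is normal.

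Combining these base cases with Theorem \ref{normality-Ohsugi}(ii) gives normality of $\cut^{\Box}(G)$ for every ring graph $G$. There is no serious obstacle: the heavy lifting is done by the results of Ohsugi already quoted, and the only thing to verify is that the structural definition of ring graphs aligns with the class of graphs built from bridges and cycles via the operations covered by Theorem \ref{normality-Ohsugi}(ii), which is immediate from the characterization recalled just before the corollary.
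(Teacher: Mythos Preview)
Your proof is correct and follows essentially the paper's first argument: verify normality for the building blocks (trees and cycles) and then glue via Theorem~\ref{normality-Ohsugi}(ii). The paper also records a second, shorter route you did not take: since every ring graph is $K_4$-minor-free (hence $K_5\setminus e$-minor-free), Proposition~\ref{known-cases-conjecture}(i) applies directly to the whole graph, bypassing the decomposition entirely.
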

\begin{proof}
There are at least two arguments using Ohsugi's results.
For the first one observe that cut polytopes of trees
and cycles are normal as one might check as an exercise.
Then one concludes the proof using Theorem \ref{normality-Ohsugi}(ii) and the definition of ring graphs.
There exists also an even quicker second proof
by applying Proposition \ref{known-cases-conjecture}(i) (which was proved by Ohsugi using Theorem \ref{normality-Ohsugi}(i)), since ring graphs are $K_4$-minor-free graphs.
\end{proof}
%Note that Proposition \ref{known-cases-conjecture} (i), which was used in the previous proof, is one of the strongest results related to Conjecture \ref{Conj-StSu}.
Studying this conjecture further and having Kuratowski’s Theorem in mind, one could try to prove it in the case of planer graphs.
%If one could prove that cut polytopes of planar triangulations are normal, then using Theorem \ref{normality-Ohsugi}(i), one can show that this holds for planar graphs.
But proving the case of planar triangulations is equivalent to showing that any $K_5$-minor-free graph yields a normal cut polytope (see \cite[Conjecture 4.3]{Ohsugi}).

\begin{example}
A partial related result to the latter discussion is
that for example cut polytopes of outerplanar graphs are normal
as one can prove using the results mentioned in this section. Details are left to the reader.
\end{example}

%\begin{proof}
%Let $G$ be an outerplanar graph.By adding suitable edges we can construct a maximal outerplanar graph $\tilde{G}$ which is known to be chordal. Trivially it is $K_5$-minor-free. Thus, the discussion on compressed polytopes above yields that the cut polytope of $\tilde{G}$ is normal. Since $G$ is a minor of $\tilde{G}$ and applying Theorem \ref{normality-Ohsugi}(i) one concludes the proof.
%\end{proof}

Further evidence to Conjecture \ref{Conj-StSu} are given by the characterizations of $\Kb[G]$ being a complete intersection in  \cite{Romer-Madani}
and being a normal Gorenstein algebra
in \cite{Ohsugi-Gorenstein}.
Special knowledge about polytopes can also help to support \ref{Conj-StSu}.
\begin{example}
\label{thm-simple-simplicial}
Let $G$ be a graph such that the cut polytope  $\cut^{\Box}(G)$ is either
simplicial or simple, then Conjecture \ref{Conj-StSu} is true for $\cut^{\Box}(G)$.  Indeed, the simplicial case is obtained
by \cite[Theorem 4.5]{ChJKNoRo}, where all possible polytopes are listed,
and then using \cite[Examples 3.7]{Ohsugi}
as well as \cite[Table 1]{Romer-Madani} to verify all parts of the conjecture. For simple polytopes
one either applies \cite[Theorem 1]{KW} or
\cite[Lemma 4.2 and Theorem 4.3]{ChJKNoRo}
to obtain a list of possible polytopes, which are $k$-sums
of normal cut polytopes with $k\leq 2$. Then normality follows from
Theorem \ref{normality-Ohsugi}(ii) and $\mu(I_G)\leq 4$ by applying \cite[Theorem 2.1]{Sturmfels-Sullivant}. Note that in both cases normality alone could also be deduced using Proposition \ref{known-cases-conjecture}(i).
\end{example}
%\begin{proof}
%(i)  Assume that  $\cut^{\Box}(G)$ is simplicial
%and without loss of generality $G$ has no isolated vertices. It follows from \cite[Theorem 4.5]{ChJKNoRo}
%that $G$ is one of the following graph:
%$
%K_2,\hspace{0.1cm} K_2\mathbin{\dot{\cup}} K_2,\hspace{0.1cm} K_2\#_{K_1} K_2,\hspace{0.1cm} K_3,\hspace{0.1cm} K_4 \textrm{ or } C_4.
%$ Here the only important thing to observe is that every graph in this list has less than or equal to $5$ many vertices and no one is equal to $K_5$.
%Thus, \cite[Examples 3.7]{Ohsugi} as a corollary
%of Theorem \ref{normality-Ohsugi} yields
%that $\cut^{\Box}(G)$  is normal.
%Moreover, $\mu(I_G)\leq 4$ follows from \cite[Table 1]{Romer-Madani} and related discussions.

%(ii)
%Next assume that  $\cut^{\Box}(G)$ is simple.
%By \cite[Lemma 4.2 and Theorem 4.3]{ChJKNoRo}
%$G$ can then be decomposed as a $K_1$-sum
%as
%\[
%G= G_1\#_{K_1}\cdots \#_{K_1} G_k \text{ with } G_i=K_2 \text{ or } G_i=K_3 \text{ for all } i=1,\dots, k.
%\]
%The cut polytopes of $K_2$, $K_3$ are normal. We conclude the proof by
%applying Theorem \ref{normality-Ohsugi}(ii).
%Finally, $\mu(I_G)\leq 4$ follows by applying $k$-times  \cite[Theorem 2.1]{Sturmfels-Sullivant}.
%\end{proof}
%The polytopes in (ii) are in generally not Gorenstein as follows from \cite[Example 1.8]{Ohsugi-Gorenstein}.

\section{Seminormality of cut polytopes}
\label{Section-4}

A natural approach to relax the  normality property
in Conjecture \ref{Conj-StSu} is to study seminormal cut algebras. In particular, see \cite{Bruns-Gubeladze, BrPiRo, Hochster-Roberts} for references of related results to affine monoid rings, which are of relevance in the following. Compared to normality, the definition
of seminormality seems to be less known
and we recall it here in the context of monoids (see, e.g., \cite[Definition 2.39]{Bruns-Gubeladze}):

\begin{definition}
A monoid $M$ is called \emph{seminormal}
if $x \in\gp(M)$ with $2x, 3x\in M$
implies that $x\in M$.
The \emph{seminormalization} $\+M$ of $M$ is the intersection of all seminormal submonoids
of $\gp(M)$ which contain $M$.
\end{definition}
One can see that $\+M$ is a monoid which is affine if
$M$ is affine. We have $M\subseteq \+M$ with equality
if and only if $M$ is seminormal. Note that every normal monoid is seminormal and in general
we have $M\subseteq \+M \subseteq \overline{M}$.
Hochster and Roberts proved in \cite[Proposition 5.32]{Hochster-Roberts} that an affine monoid is seminormal if and only if $\Kb[M]$ is seminormal in the algebraic sense. For an affine monoid $M$, set $M_\ast=\inte(M)\cup \{0\}$. Recall the following result:

\begin{proposition}{(\cite[Proposition 2.20]{Bruns-Gubeladze})}
\label{Prop-seminormal-criterion}
An affine monoid $M$ is seminormal if and only if $(M \cap F)_\ast$ is a normal monoid for every face $F$ of $\cone(M)$.
In particular, if  $M$ is seminormal, then $M_\ast=\overline{M}_\ast$, i.e. $M_\ast$ is normal.
\end{proposition}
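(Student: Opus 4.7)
The plan is to establish the two directions of the equivalence separately, and then obtain the ``in particular'' assertion as the special case $F=\cone(M)$. For the easy direction, assume $(M\cap F)_\ast$ is normal for every face $F$ of $\cone(M)$, and let $x\in\gp(M)$ satisfy $2x,3x\in M$. I would let $F$ be the smallest face of $\cone(M)$ containing $x$, so that $x\in\inte(F)$ automatically; then $2x,3x\in M\cap\inte(F)\subseteq(M\cap F)_\ast$, and since $x=3x-2x\in\gp((M\cap F)_\ast)$ while $2x\in(M\cap F)_\ast$, normality of $(M\cap F)_\ast$ forces $x\in(M\cap F)_\ast\subseteq M$, proving seminormality.

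For the harder direction, assume $M$ is seminormal and fix a face $F$. First, $M\cap F$ inherits seminormality: given $y\in\gp(M\cap F)$ with $2y,3y\in M\cap F$, seminormality of $M$ gives $y\in M$, and $y=\tfrac{1}{3}(3y)\in F$ because $F$ is a cone, so $y\in M\cap F$. This reduces the problem to the claim that \emph{every seminormal affine monoid $N$ has $N_\ast$ normal}. I would attack this claim via two reformulations: (a) $N_\ast$ is normal if and only if $N\cap\inte(\cone(N))=\overline{N}\cap\inte(\cone(N))$, since $\gp(N_\ast)=\gp(N)$ and a positive multiple of $y$ lying in $N_\ast$ automatically places $y$ in $\inte(\cone(N))$; (b) $N$ is seminormal if and only if $y\in\gp(N)$ with $ky\in N$ for all $k\gg 0$ implies $y\in N$, which one deduces from the original definition by descending induction applied to $(k_0-1)y$ once $ky\in N$ for all $k\geq k_0$ is known.

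The main obstacle is the key lemma that every $y\in\overline{N}\cap\inte(\cone(N))$ satisfies $ky\in N$ for all sufficiently large $k$. Here I plan to exploit that the conductor $\mathfrak{c}=\{z\in N:z+\overline{N}\subseteq N\}$ is a nonempty ideal of $N$ (for affine $N$, since $\overline{N}$ is covered by finitely many translates of $N$). Choosing any $z\in\mathfrak{c}$ and any $k$ large enough that $y-z/k\in\inte(\cone(N))$, one has $ky-z=k(y-z/k)\in\cone(N)\cap\gp(N)=\overline{N}$, and hence $ky=z+(ky-z)\in z+\overline{N}\subseteq N$. Combined with the two reformulations this yields $y\in N_\ast$, proving the claim. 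Applying the claim to $N=M\cap F$ completes the hard direction, and specializing the equivalence to $F=\cone(M)$ gives the ``in particular'' assertion $M_\ast=\overline{M}_\ast$.
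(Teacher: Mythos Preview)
The paper does not prove this proposition; it is quoted verbatim from Bruns--Gubeladze \cite[Proposition~2.20]{Bruns-Gubeladze} and used as a black box. So there is no in-paper argument to compare against.

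Your proof is correct and follows the standard route one finds in Bruns--Gubeladze. Two small points are worth making explicit. First, in the easy direction you silently use that $x\in\cone(M)$ (so that ``the smallest face containing $x$'' makes sense) and that $\cone(M\cap F)=F$ (so that $\inte(M\cap F)=M\cap\inte(F)$); both are immediate for affine $M$ but deserve a sentence. Second, your justification that the conductor $\mathfrak{c}$ is nonempty (``$\overline{N}$ is covered by finitely many translates of $N$'') is the right ingredient but is not by itself the conclusion: one still needs to produce a single $z\in N$ with $z+a_i\in N$ for all of the finitely many $a_i$. The cleanest way is to pass to the algebra side: $\Kb[\overline{N}]$ is a finite $\Kb[N]$-module, so the ring-theoretic conductor is a nonzero ideal, and it is a monomial ideal, hence contains some $X^z$ with $z\in\mathfrak{c}$. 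With these two clarifications your argument is complete.
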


In this section we study (non-) seminormal cut algebras
and related questions.
One of our results  is that
the cut polytope of $K_5$ is not seminormal. In particular,
this complements the discussion in
\cite[Example 7.2]{Romer-Madani}
and reproves also some (computer based) facts in \cite[Table 1]{Sturmfels-Sullivant} related to $K_5$.
%Here we show $\cut^{\Box}(K_5)$ is not seminormal by producing an element in the $\gp (M_{K_5})\setminus \inte (M_{K_5})$ which is integral over $\inte(M_{K_5})$.
For sets $\{1,\ldots,n\}$ we write also $[n]$.
\begin{theorem}
\label{Theorem-K5}
The polytope $\cut^{\Box}(K_5)$ is not seminormal. In particular, it is not normal.
\end{theorem}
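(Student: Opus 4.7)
The plan is to verify non-seminormality directly from the definition by exhibiting an explicit element $\wbf = (\xbf,\alpha) \in \gp(M_{K_5}) \setminus M_{K_5}$ such that both $2\wbf$ and $3\wbf$ lie in $M_{K_5}$. The ``in particular'' assertion is then automatic, since every normal monoid is seminormal, so failure of seminormality forces failure of normality.

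Setup and candidate: Label $V(K_5) = [5]$ and order the ten edges lexicographically. By \eqref{elt-in-group} the group $\gp(M_{K_5})$ consists of those $(\xbf,\alpha) \in \Zb^{10}\times\Zb$ with $x_{ij} + x_{ik} + x_{jk}$ even for every triangle $\{i,j,k\}$, since the cycle space of $K_5$ is generated mod $2$ by triangles. Guided by the computer-algebra evidence in \cite[Table 1]{Sturmfels-Sullivant} and the discussion in \cite[Example 7.2]{Romer-Madani}, I would pick a small level $\alpha$ (presumably $2$ or $3$) and a symmetric vector $\xbf$ obtained as half the sum of an odd number of cut vectors of $K_5$; such a choice produces an integer point in $\gp(M_{K_5})$ by the triangle parity test but, precisely because of the odd count, has no integer decomposition of length $\alpha$ into cut vectors.

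Easy verifications: Once the witness $\wbf$ is fixed, one checks $\wbf \in \gp(M_{K_5})$ by inspecting the triangle sums, and one displays explicit decompositions $2\wbf = \sum_{i=1}^{2\alpha}(\delta_{A_i},1)$ and $3\wbf = \sum_{i=1}^{3\alpha}(\delta_{B_i},1)$ as sums of cut generators. These are finite, routine checks.

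Main obstacle: The heart of the argument is to prove $\wbf \notin M_{K_5}$. Because $K_5$ has only $2^{|V|-1}=16$ distinct cut vectors and $\alpha$ is small, only finitely many multisets of cut vectors could in principle sum to $\wbf$, so a brute-force enumeration is feasible but unenlightening. A cleaner route exploits the structure of $K_5$: restrict $\wbf$ to a carefully chosen induced subgraph (a triangle, or an induced $C_5$, on which cut vectors take only a handful of values) and use a parity or odd-cycle argument valid on every cut vector to derive a contradiction with any hypothetical decomposition. This step is delicate precisely because $\wbf$ lies in $\overline{M_{K_5}}$, so no linear facet-defining inequality separates $\wbf$ from $M_{K_5}$; the obstruction must be \emph{congruence-theoretic} (e.g.\ a mod-$2$ invariant on certain subsums) rather than convex. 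Once this finite obstruction is in place, the whole argument assembles into a short direct proof, and in view of Proposition \ref{Prop-seminormal-criterion} one sees moreover that $(M_{K_5})_\ast$ itself fails to be normal.
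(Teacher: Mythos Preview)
Your proposal is a plan, not a proof: you never actually name the witness $\wbf$, you never write down the decompositions of $2\wbf$ and $3\wbf$, and you explicitly leave the ``main obstacle'' (showing $\wbf\notin M_{K_5}$) unresolved. The heuristics you offer for finding $\wbf$ are also off-target. The guess ``$\alpha$ presumably $2$ or $3$'' is wrong: the natural witness lives at level $\alpha=4$, namely $\xbf=(2,2,\dots,2,4)$, and the construction ``half the sum of an odd number of cut vectors'' does not produce it. Moreover, the obstruction to $\xbf\in M_{K_5}$ is \emph{not} a mod-$2$ parity on a subgraph as you suggest; it is a short counting argument: the ten coordinates of $\xbf$ sum to $20$, cut vectors of $K_5$ have entry-sum $0$, $4$, or $6$, so any degree-$4$ decomposition must use exactly two singleton cuts and two $2$-subset cuts, and then restricting to the triangle on the complement of the two singletons shows that no two cut vectors of a triangle can sum to $(2,2,2)$.

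For comparison, the paper does not use the $2\wbf,3\wbf$ definition directly. Instead it invokes Proposition~\ref{Prop-seminormal-criterion}: it shows that $\xbf=(2,\dots,2,4)\in\overline{M}_\ast\setminus M_\ast$ (using $4\xbf=\sum_{i=0}^{15}(\abf_i,1)\in\inte(M)$ for the first inclusion and the counting argument above for the second), so $M_\ast$ is not normal and hence $M$ is not seminormal. Your direct route is viable with the \emph{same} witness---one can check $2\xbf=\sum_{i=1}^4(\abf_i,1)+\sum_{j=1}^4(\delta_{\{j,5\}},1)$ and $3\xbf=\sum_{|A|=2}(\delta_A,1)+2(\abf_0,1)$ both lie in $M_{K_5}$---but you would still need the non-membership argument, and that is precisely the step you have not supplied.
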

\begin{proof}
The cut polytope for $K_5$,
$\cut^{\Box}(K_5)$ is the convex hull of the
following cut vectors:
\begin{align*}
\abf_0
&=
(   0,\
    0,\
    0,\
    0,\
    0,\
    0,\
    0,\
    0,\
    0,\
    0
),
&
\abf_1
&=
(   1,\
    1,\
    1,\
    1,\
    0,\
    0,\
    0,\
    0,\
    0,\
    0
),
\\
\abf_2
&=
(
    1,\
    0,\
    0,\
    0,\
    1,\
    1,\
    1,\
    0,\
    0,\
    0
),
&
\abf_3
&=
(
    0,\
    1,\
    0,\
    0,\
    1,\
    0,\
    0,\
    1,\
    1,\
    0
),
\\
\abf_4
&=
(
    0,\
    0,\
    1,\
    0,\
    0,\
    1,\
    0,\
    1,\
    0,\
    1
),
&
\abf_5
&=
(
    0,\
    0,\
    0,\
    1,\
    0,\
    0,\
    1,\
    0,\
    1,\
    1
),
\\
\abf_6
&=
(
    0,\
    1,\
    1,\
    1,\
    1,\
    1,\
    1,\
    0,\
    0,\
    0
),
&
\abf_7
&=
(
    1,\
    0,\
    1,\
    1,\
    1,\
    0,\
    0,\
    1,\
    1,\
    0
),
\\
\abf_8
&=
(
    1,\
    1,\
    0,\
    1,\
    0,\
    1,\
    0,\
    1,\
    0,\
    1
),
&
\abf_9
&=
(
    1,\
    1,\
    1,\
    0,\
    0,\
    0,\
    1,\
    0,\
    1,\
    1
),
\\
\abf_{10}
&=
(
    1,\
    1,\
    0,\
    0,\
    0,\
    1,\
    1,\
    1,\
    1,\
    0
),
&
\abf_{11}
&=
(
    1,\
    0,\
    1,\
    0,\
    1,\
    0,\
    1,\
    1,\
    0,\
    1
),
\\
\abf_{12}
&=
(
    1,\
    0,\
    0,\
    1,\
    1,\
    1,\
    0,\
    0,\
    1,\
    1
),
&
\abf_{13}
&=
(
    0,\
    1,\
    1,\
    0,\
    1,\
    1,\
    0,\
    0,\
    1,\
    1
),
\\
\abf_{14}
&=
(
    0,\
    1,\
    0,\
    1,\
    1,\
    0,\
    1,\
    1,\
    0,\
    1
),
&
\abf_{15}
&=
(
    0,\
    0,\
    1,\
    1,\
    0,\
    1,\
    1,\
    1,\
    1,\
    0
).
\end{align*}
The cut vector $\abf_0$ corresponds to the choice $\emptyset\subseteq V$.
The cut vectors $\abf_1, \ldots,\abf_5$ are induced by $\{1\}, \ldots, \{5\}$. These cut vectors have exactly $4$ nonzero entries as each vertex belongs to $4$ edges. The other cut vectors correspond to the cardinality $2$ subsets of $[5]$ and they have $6$ nonzero entries.
Set
\[
M=
\biggl\{\sum_{i=0}^{15}z_i
(\abf_i, 1):
z_i\in \Zb_{\geq 0}
\biggr\}.
\]
Observe that $\cut^{\Box}(K_5)$ is seminormal if and only if the $M$ is a seminormal affine monoid.

Consider the element
\[
\xbf=(2,\ 2,\ 2,\ 2,\ 2,\ 2,\ 2,\ 2,\ 2,\ 2,\ 4)
    \in \Zb^{11}.
\]
We claim that $\xbf\in \overline{M}_\ast\setminus M_\ast$,
where $\overline{M}_\ast$ is the normalization of $M_\ast$.
Then $M_\ast$ is not normal and Proposition \ref{Prop-seminormal-criterion} yields that $M$ cannot be seminormal.

At first one sees that $\xbf\in \gp(M)$, since, e.g.,
$
\xbf=
\sum_{i=1}^5
(\abf_i, 1)
-
(\abf_0, 1)
.
$ Thus, $\xbf\in \gp(M_\ast)$ by \cite[Corollary 2.25]{Bruns-Gubeladze}. Note that
\[
4\xbf=\sum_{i=0}^{15}
(\abf_i, 1)
\in \inte(M)\subseteq M_\ast.
\]
That $4\xbf\in\inte(M)$ can, e.g., be deduced from the fact that it is a multiple of a lifting of the
barycenter of the vertices of $\cut^{\Box}(K_5)$ to $M$.
Alternatively, one checks the facet description of $\cut^{\Box}(K_5)$ in \cite[Chapter 30.6]{Deza-Laurent-10}
to deduce that statement. Hence, $\xbf\in \overline{M}_\ast$.

Assume that $\xbf\in M_\ast$. Then $\xbf\in M$ and one gets
\begin{equation}
\label{Eq-K5}
\xbf=
(\abf_{i_1}, 1)
+
(\abf_{i_2}, 1)
+
(\abf_{i_3}, 1)
+
(\abf_{i_4}, 1)
\text{ for some } i_j\in [15].
\end{equation}
Observe that the sum of the first 10 entries of $\xbf$ is $20$ and sum of the entries of
$\abf_i$'s are either $4$ or $6$.
The only possibility is then that exactly two of the involved $\abf_{i_j}$ have the property that the sum of their entries is equal 4 and for the other two it is $6$.
Without loss of generality we may assume that
$i_1,i_2 \in [5]$ and $i_3,i_4 \in [15]\setminus[5]$.
Next we see that $\abf_{i_1}+\abf_{i_2}$ has exactly three entries equal to zero.  Indeed, they correspond to the edges of the triangle with vertices $[5]\setminus\{i_1,i_2\}$.
If one compares $\abf_{i_3}$, $\abf_{i_4}$ at those
entries, then the possibilities are
$
(0,\ 0,\ 0),\
(1,\ 1,\ 0),\
(0,\ 1,\ 1),\
(1,\ 0,\ 1).
$
Note that these correspond to all cut vectors of a triangle. Anyhow, adding two of such vectors one cannot get $(2,\ 2,\ 2)$. Thus, (\ref{Eq-K5}) cannot exists, which concludes the proof.
\end{proof}

Recall that for two (graded) $\Kb$-algebras $A,B$
and an injective (homogeneous) homomorphism $\iota\colon A\to B$ one calls $A$ \emph{an algebra retract} of $B$,
if there exists a
(homogeneous) homomorphism $\pi\colon B\to A$
such that $\pi \circ \iota=\id_A$.
Note that given a face $F$ of a polytope $P$,
then it follows from \cite[Corollary 4.34]{Bruns-Gubeladze}
that $\Kb[F]$ is a graded retract of $\Kb[P]$, which
we call a \emph{face retract}. This is a useful concept, since, e.g., if in the retract situation above $B$ is seminormal or normal, then $A$ has also this property (see, e.g., \cite[Proposition 3.1]{Epstein-Nguyen}).

\begin{corollary}
\label{Cor-seminormal}
Let $G$ be a graph such that $\cut^{\Box}(G)$ is seminormal. Then $G$ is $K_5$-minor-free.
\end{corollary}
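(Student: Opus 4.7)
I would argue the contrapositive: assuming $K_5$ is a minor of $G$, show that $\cut^{\Box}(G)$ cannot be seminormal, the target contradiction being Theorem~\ref{Theorem-K5}. Two ingredients are already in place. First, as recorded in the preliminaries, whenever $K_n$ is a minor of $G$ it can be obtained from $G$ by a sequence of edge contractions (together with deletions of possible multi-edges, loops and isolated vertices, none of which affect the cut polytope). Second, by the remark preceding the corollary (via \cite[Corollary 4.34]{Bruns-Gubeladze} and \cite[Proposition 3.1]{Epstein-Nguyen}), a graded face retract of a seminormal algebra is again seminormal.

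The central step is to show that for every edge $e \in E(G)$ the cut algebra $\Kb[G/e]$ is a graded face retract of $\Kb[G]$. Since every cut vector has $0/1$-entries, the inequality $x_e \geq 0$ is valid and attained on $\cut^{\Box}(G)$, so
\[
F_e := \cut^{\Box}(G) \cap \{x_e = 0\}
\]
is a face. Its vertices are exactly those $\delta_A$ for which the endpoints of $e = vw$ lie on the same side of the bipartition, and these are in natural bijection with the bipartitions of $V(G/e)$. If $G$ contains edges $f = vx$ and $f' = wx$ sharing a common neighbor $x$ of $v$ and $w$, then $f, f'$ become parallel in $G/e$ and on $F_e$ one has $x_f = x_{f'}$, because both coordinates record the same cut of $G/e$. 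Collapsing each such duplicated pair of coordinates produces a unimodular lattice isomorphism $F_e \cong \cut^{\Box}(G/e)$, and hence $\Kb[F_e] \cong \Kb[G/e]$ as graded $\Kb$-algebras. Being a face, $F_e$ supplies a graded face retract $\Kb[G] \to \Kb[F_e] \cong \Kb[G/e]$.

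Iterating this construction along the sequence of edge contractions that realizes $K_5$ as a minor of $G$, and using that a composition of graded retracts is again a graded retract, yields $\Kb[K_5]$ as a graded retract of $\Kb[G]$. If $\Kb[G]$ were seminormal, then $\Kb[K_5]$ would be too, contradicting Theorem~\ref{Theorem-K5}. The only mildly technical point is the identification $F_e \cong \cut^{\Box}(G/e)$ when $v, w$ share common neighbors and parallel edges are created by the contraction; once that bookkeeping is carried out, the rest is a direct application of the face-retract principle.
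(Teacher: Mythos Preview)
Your argument is correct and follows essentially the same route as the paper: contract edges to reach a $K_5$-minor, identify $\cut^{\Box}(K_5)$ with a face of $\cut^{\Box}(G)$, pass seminormality along the face retract, and invoke Theorem~\ref{Theorem-K5}. The only difference is that the paper cites \cite[Lemma~3.2(2)]{Sturmfels-Sullivant} directly for the fact that $\cut^{\Box}(G/e)$ is a face of $\cut^{\Box}(G)$, whereas you reprove this inline via the face $F_e=\{x_e=0\}$ and the coordinate-collapsing isomorphism.
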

\begin{proof}
Assume that $G$ has a $K_5$-minor. Recall that this minor can be obtained by taking edge contractions only (plus possible deleting isolated vertices). Hence, the cut polytope corresponding to that $K_5$-minor
$\cut^{\Box}(K_5)$ is a face of $\cut^{\Box}(G)$
by \cite[Lemma 3.2(2)]{Sturmfels-Sullivant}. Since $\cut^{\Box}(G)$ is seminormal,
then so is $\cut^{\Box}(K_5)$ by the retract situation described above.
This is a contradiction to Theorem \ref{Theorem-K5}
and hence $G$ has to be $K_5$-minor-free.
\end{proof}

Note that if Conjecture \ref{Conj-StSu} is true, then normality and seminormality of cut polytopes are equivalent.
Remarkably, Laso\'{n} and Micha{\l}ek \cite{Lason-Michalek}
were able to prove the latter fact. Their proof is based on Corollary \ref{Cor-seminormal} and very interesting additional facts related to
the question in which situations cut polytopes are very ample.

\begin{theorem}[(\cite{Lason-Michalek})]
Let $G$ be a graph. Then the following statements are equivalent:
\begin{enumerate}
\item
$\cut^{\Box}(G)$ is normal;
\item
$\cut^{\Box}(G)$ is seminormal.
\end{enumerate}
In particular,
Theorem \ref{normality-Ohsugi} is true
by replacing the word normal with seminormal everywhere.
\end{theorem}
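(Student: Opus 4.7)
The direction (i) $\Rightarrow$ (ii) is immediate, since by definition every normal monoid is seminormal, and by the Hochster--Roberts criterion seminormality transfers from $M_G$ to the algebra $\Kb[G]$. The content is therefore the converse. Assume $\cut^{\Box}(G)$ is seminormal; by Corollary \ref{Cor-seminormal}, $G$ must be $K_5$-minor-free, so we may work with the explicit facet description of $\cut^{\Box}(G)$ from Proposition \ref{Prop-facets-K5-minor-free} and the matching description of $\cone(M_G)$ via the inequalities \eqref{InEq-facets1}--\eqref{InEq-facets2}.

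The proposed strategy uses \emph{very ampleness} as the intermediate concept: a lattice polytope $P$ is very ample if $(\overline{M_P})_k=(M_P)_k$ for all $k\gg 0$, so that the discrepancy between $M_P$ and its normalization is confined to finitely many graded components. The plan then splits into two steps. First, establish that $\cut^{\Box}(G)$ is very ample for every $K_5$-minor-free graph $G$; a natural route is to verify directly (in the spirit of Theorem \ref{normality-Ohsugi}(ii)) that very ampleness is preserved under $k$-sums for $k\leq 2$, thereby reducing to indecomposable pieces whose integer points in $\cone(M_G)$ are controlled, degree by degree, by the odd-cycle inequalities \eqref{InEq-facets2}. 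Second, combine very ampleness with seminormality to conclude normality: given $\xbf\in\overline{M_G}\setminus M_G$, very ampleness confines such $\xbf$ to a bounded range of degrees, and one iteratively invokes the defining implication $2\xbf,3\xbf\in M_G\Rightarrow \xbf\in M_G$, together with the face-wise normality statement of Proposition \ref{Prop-seminormal-criterion}, to propagate the equality $(\overline{M_G})_k=(M_G)_k$ from the high-degree range down to every degree, contradicting the existence of $\xbf$.

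The principal obstacle is the first step. Since Theorem \ref{Theorem-K5} shows that seminormality (hence any property strong enough to imply it) already fails for $K_5$, any proof of very ampleness for $K_5$-minor-free cut polytopes must exploit the minor-exclusion hypothesis in an essential way; the most promising leverage is the explicit combinatorial control over $\cone(M_G)\cap\gp(M_G)$ furnished by \eqref{InEq-facets1}--\eqref{InEq-facets2} together with the $0$-, $1$- and $2$-sum reduction, which together should reduce very ampleness to a finite check on a small list of base cases. Once the equivalence (i) $\Leftrightarrow$ (ii) is in place, the \emph{in particular} clause is immediate: each occurrence of the word \emph{normal} in the statement of Theorem \ref{normality-Ohsugi} may be replaced by \emph{seminormal} with no change in truth value, since the two properties now coincide for all cut polytopes.
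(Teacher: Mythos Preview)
The paper does not give its own proof of this theorem; it is quoted as a result of Laso\'{n} and Micha{\l}ek, with only the remark that their argument rests on Corollary~\ref{Cor-seminormal} together with facts about very ampleness of cut polytopes. Your outline is thus consistent with that description, and your second step is essentially sound: once one knows $(\overline{M_G})_k=(M_G)_k$ for all $k\ge N$, a straightforward downward induction on $k$ using only the defining implication $2\xbf,3\xbf\in M_G\Rightarrow\xbf\in M_G$ (for $\xbf$ of degree $k\ge 1$ one has $2k,3k\ge k+1$) forces equality in every degree. The invocation of Proposition~\ref{Prop-seminormal-criterion} is superfluous here.

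The genuine gap is your first step. You propose to show that $\cut^{\Box}(G)$ is very ample for every $K_5$-minor-free $G$ by reducing along $k$-sums with $k\le 2$, claiming this ``should reduce very ampleness to a finite check on a small list of base cases''. It does not: the graphs indecomposable with respect to $0$-, $1$- and $2$-sums are the $3$-connected ones, and by Wagner's theorem the $3$-connected $K_5$-minor-free graphs comprise \emph{all} $3$-connected planar graphs together with the Wagner graph $V_8$---an infinite family. So your proposed reduction does not terminate in a finite verification, and you have not supplied any argument for very ampleness of these base pieces. Establishing very ampleness for $K_5$-minor-free cut polytopes is precisely the substantive content of the Laso\'{n}--Micha{\l}ek paper; it requires a different idea than clique-sum reduction, and your proposal, while correctly identifying this as ``the principal obstacle'', leaves it unresolved.
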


\section{Canonical modules of cut polytopes}
\label{Section-5}

Given a (graded) Cohen--Macaulay algebra
there exists its (graded) \emph{canonical module}
which captures many important properties of the algebra. For its algebraic definition
and the theory itself we refer to \cite{Bruns-Gubeladze, Bruns-Herzog}.
Let $M$ be a normal and thus a Cohen--Macaulay
affine monoid (i.e. its algebra $\Kb[M]$ has the property). The canonical module of $\Kb[M]$ has in this case a nice description due to Danilov and Stanley (see \cite[Theorem 6.7]{Stanley} or \cite[Theorem 6.31]{Bruns-Gubeladze}). Indeed, it is the ideal generated by $\inte(M)$ inside $\Kb[M]$.
The main goal of this section is to determine this module for certain normal cut polytopes.

A special class of Cohen--Macaulay algebras
are the Gorenstein ones, which are of great interest in commutative algebra. For a normal affine monoid $M$ it is well-known that $\Kb[M]$ is Gorenstein
if and only if
$$\inte(M) = \xbf + M \text{ for some } \xbf\in \inte(M)
$$
(see, e.g., \cite[Theorem 6.33]{Bruns-Gubeladze}).
Then we call $x$ also a \emph{generator} for $\inte(M)$.
Observe that Ohsugi classified in \cite[Theorem 3.4]{Ohsugi-Gorenstein} all normal Gorenstein cut polytopes:
\begin{theorem}
[(\cite{Ohsugi-Gorenstein})]
\label{Ohsugi-Gorenstein}
The cut polytope $\cut^{\Box}(G)$ of a graph $G$ is normal and Gorenstein if and only if  $G$ is $K_5$-minor-free and
$G$ satisfies one of the following conditions:
\begin{enumerate}
\item
$G$ is a bipartite graph without induced cycle of length greater or equal to $6$.
\item
$G$  is a bridgeless chordal graph.
\end{enumerate}
\end{theorem}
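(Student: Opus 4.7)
The plan is to apply the Danilov--Stanley description of the canonical module of a normal affine monoid ring, together with its consequence that $\Kb[G]$ is Gorenstein if and only if there exists $(\xbf,\alpha) \in \inte(M_G)$ whose value on every primitive inner facet normal of $\cone(M_G)$, taken in the dual of $\gp(M_G)$, equals one. With this criterion, the theorem reduces to a concrete calculation using the facet description of $\cone(M_G)$ from Proposition~\ref{Prop-facets-K5-minor-free} (equations~\eqref{InEq-facets1}--\eqref{InEq-facets2}) and the description of $\gp(M_G)$ from~\eqref{elt-in-group}. Since normality already forces $G$ to be $K_5$-minor-free by Corollary~\ref{Cor-seminormal}, both directions may assume this condition from the start.

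First, I would identify the primitive normals. For an edge $e$ not contained in any triangle, the functionals $x_e$ and $\alpha-x_e$ already take all integer values on $\gp(M_G)$ and are therefore primitive. For an induced cycle $C$ and odd $F \subseteq E(C)$, however, the functional $(|F|-1)\alpha - \sum_{f\in F} x_f + \sum_{e\in E(C)\setminus F} x_e$ takes only even values on $\gp(M_G)$: by~\eqref{elt-in-group}, $\sum_{e\in C}x_e$ is always even, so the $x$-part of the functional differs from $2\sum_{e\in E(C)\setminus F}x_e$ by an even integer, and the $\alpha$-coefficient $|F|-1$ is itself even. Hence the primitive normal is one-half of the written one, and the slack-one condition reads $(|F|-1)\alpha - \sum_{f\in F}x_f + \sum_{e\in E(C)\setminus F}x_e = 2$.

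Next comes the case analysis on the generator. Any edge $e$ outside every triangle forces $x_e=1$ and $\alpha=2$; the three slack equations for $|F|=1$ combined with the one for $|F|=3$ around a triangle $\{e_1,e_2,e_3\}$ force $x_{e_1}=x_{e_2}=x_{e_3}=2$ and $\alpha=4$. Since $\alpha$ is a single scalar, $G$ is either triangle-free or has every edge in a triangle. In the triangle-free case, substituting $x_e=1$, $\alpha=2$ into the cycle slack equation for an induced cycle of length $n$ simplifies to $n=4$; combined with the fact that a shortest odd cycle is always induced, this forces $G$ to be bipartite with no induced cycle of length $\geq 6$, i.e., condition~(i). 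Otherwise, substituting $x_e=2$, $\alpha=4$ forces every induced cycle to have length three, so $G$ is chordal; and in a chordal graph \emph{every edge lies in a triangle} is equivalent to \emph{every edge lies in a cycle}, i.e., to $G$ being bridgeless, which is condition~(ii).

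For the converse, the candidate generator in case~(i) is $(\onebf,2)$ and in case~(ii) is $(2\cdot\onebf,4)$; both lie in $\gp(M_G)$ by~\eqref{elt-in-group} (case~(i): every cycle of a bipartite graph is even, so $\sum_{e\in C}1$ is even; case~(ii): all entries are even), and both are strictly feasible on every facet, hence interior. The normality of $\Kb[G]$ in these cases can be deduced from Sullivant's compressed criterion \cite[Theorem~3.2]{sullivant}, since in both situations every induced cycle of $G$ has length at most $4$. Normality then gives $M_G = \gp(M_G) \cap \cone(M_G)$, placing the candidate in $M_G$ and completing the proof. The main obstacle is the correct identification of the primitive normals in the index-two sublattice $\gp(M_G)\subseteq\Zb^{E+1}$: a naive computation in $\Zb^{E+1}$ produces slack values that are off by a factor of two and would wrongly exclude both cases~(i) and~(ii); beyond this, the argument is a direct inspection of the facet equations from Proposition~\ref{Prop-facets-K5-minor-free}.
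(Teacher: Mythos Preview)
The paper does not give its own proof of this theorem; it is quoted verbatim from Ohsugi \cite{Ohsugi-Gorenstein} and used as a black box for the subsequent results (in particular Theorem~\ref{Theorem-canonical-module}). Hence there is no proof in the paper to compare your argument against.

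That said, your outline is the natural approach and the forward direction is carried out correctly. The decisive observation is that the cycle-type facet normals from Proposition~\ref{Prop-facets-K5-minor-free} are primitive on $\gp(M_G)$ only after division by $2$, because of the parity constraint~\eqref{elt-in-group}; this is precisely what makes the slack-one equations collapse to $n=4$ in the triangle-free case and $n=3$ in the every-edge-in-a-triangle case. Your identification of ``every edge lies in a triangle'' with ``bridgeless'' for chordal graphs is correct (the shortest cycle through a non-bridge edge must be a triangle, since a chord would produce a shorter cycle through that edge). The appeal to Sullivant's compressed criterion for normality in the converse is also legitimate, since in both cases every induced cycle has length at most $4$.

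There is one small gap in the converse. You check that the candidate $(\onebf,2)$, respectively $(2\cdot\onebf,4)$, lies in $\inte(M_G)$, but lying in the interior is not by itself the Gorenstein condition: you must verify that it \emph{generates} $\inte(M_G)$, equivalently that every primitive facet normal takes the value exactly $1$ (not merely a positive value) on it. This follows immediately from the same arithmetic you did in the forward direction---the slack equations become identities for these specific vectors---but it should be stated, since ``strictly feasible on every facet, hence interior'' is a weaker conclusion than what is needed. With that one line added, the argument is complete.
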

Using this result we can show:
\begin{theorem}
\label{Theorem-canonical-module}
Let $G$ be a graph such that $\cut^{\Box}(G)$ is normal and Gorenstein. Then the generator $\xbf$ of the canonical module $\inte(M_G)$ is either
\begin{enumerate}
\item
$\xbf=(1,\ 1,\ldots,\ 1,\ 2)$ if
$G$ is a bipartite graph without induced cycle of length greater or equal to $6$, or
\item
$\xbf=(2,\ 2,\ldots,\ 2,\ 4)$ if
$G$  is a bridgeless chordal graph.
\end{enumerate}
\end{theorem}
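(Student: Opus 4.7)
The plan is to use the description of $\inte(M_G)$ via the strict versions of the inequalities (\ref{InEq-facets1}) and (\ref{InEq-facets2}), combined with the parity condition (\ref{elt-in-group}). By Theorem \ref{Ohsugi-Gorenstein} the graph $G$ is $K_5$-minor-free and $M_G$ is normal, so $\inte(M_G) = \inte(\cone(M_G)) \cap \gp(M_G)$. The Gorenstein hypothesis guarantees a unique $\xbf$ with $\inte(M_G) = \xbf + M_G$, and since the last coordinate of any nonzero element of $M_G$ is a positive integer, this generator is characterised as the unique element of $\inte(M_G)$ of smallest last coordinate $\alpha$. The task therefore reduces to identifying that element.

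For case (i), the hypothesis forces every induced cycle to be a $4$-cycle and every edge to lie outside any triangle. For the candidate $\xbf = (1, \ldots, 1, 2)$ I would verify (a) the strict edge bounds $0 < 1 < 2$, (b) the strict cycle inequality for each induced $4$-cycle $C$ and odd $F \subseteq E(C)$, which reduces to $2|F| - 4 < 2(|F|-1)$, and (c) the parity condition on every cycle, which holds because cycles in a bipartite graph have even length. This places $\xbf$ in $\inte(M_G)$ by normality. Minimality of $\alpha = 2$ follows since at $\alpha = 1$ there is no integer $x_e$ with $0 < x_e < 1$, and $G$ has at least one edge.

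For case (ii), chordality forces every induced cycle to be a triangle, and the bridgeless assumption guarantees that every edge belongs to some triangle. Hence there are no edge bounds from (\ref{InEq-facets1}) and the conditions to check come only from triangles. For $\xbf = (2, \ldots, 2, 4)$ the strict triangle inequalities for odd $F$ read $-2 < 0$ (when $|F| = 1$) and $6 < 8$ (when $|F| = 3$), and every cycle sum $2|C|$ is even, so $\xbf \in \inte(M_G)$. Minimality requires a small case analysis on a single triangle $\{e_1, e_2, e_3\}$: the strict inequalities amount to $x_{e_i} < x_{e_j} + x_{e_k}$ for every cyclic permutation, the bound $x_{e_1} + x_{e_2} + x_{e_3} < 2\alpha$, and even parity of that sum. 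One checks that no integer triple satisfies all three conditions for $\alpha \in \{1, 2, 3\}$, while for $\alpha = 4$ the unique admissible triple is $(2, 2, 2)$. Since every edge lies in some triangle and must take the value forced there, $\xbf = (2, \ldots, 2, 4)$ is the unique interior element at minimal height.

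The main obstacle is the minimality step in case (ii): ruling out $\alpha \in \{1, 2, 3\}$ against the strict triangle inequalities, the upper bound on the sum, and the parity constraint is the only step requiring genuine arithmetic. The rest of the argument is a fairly mechanical verification built on Proposition \ref{Prop-facets-K5-minor-free}, the normality hypothesis, and the observation that the Gorenstein generator is pinned down by being the unique interior lattice point of smallest height.
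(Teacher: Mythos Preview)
Your proposal is correct and follows essentially the same route as the paper, which splits the argument into Lemmas \ref{canonical_module_bipartite} and \ref{canonical_ module_bridgeless_chordal} and verifies in each case that the candidate lies in $\inte(M_G)$ via the strict versions of (\ref{InEq-facets1})--(\ref{InEq-facets2}) together with (\ref{elt-in-group}), then rules out smaller heights. The only cosmetic difference is in the minimality step of case~(ii): the paper argues that an element of height $k$ restricted to a triangle must be a sum of $k$ triangle cut vectors from $\{(0,0,0),(1,1,0),(1,0,1),(0,1,1)\}$ and checks these sums directly, whereas you use the equivalent inequality/parity description to eliminate $\alpha\le 3$---both are the same short arithmetic check, just phrased on opposite sides of the normality equivalence $M_G=\cone(M_G)\cap\gp(M_G)$.
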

The above theorem is also stated in \cite[Remark 5.2]{Ohsugi-Gorenstein} but without a proof. For the convenience of the reader we show it in the following. We subdivide the proof of Theorem \ref{Theorem-canonical-module} and discuss at first the following two lemmas.
\begin{lemma}
\label{canonical_module_bipartite}
With the assumptions of Theorem \ref{Theorem-canonical-module}(i)
and $E(G)=\{e_1,\dots,e_m\}$
the generator $\xbf$ of the canonical module $\inte(M_G)$ is
\[
\xbf=(1,\ 1,\ldots,\ 1,\ 2)
\in \Zb^{m+1}.
\]
\end{lemma}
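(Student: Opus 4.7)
The plan is to exhibit $\xbf = (1, \ldots, 1, 2)$ as an element of $\inte(M_G)$ and then deduce from the Gorenstein structure that it must be the unique generator. Throughout I rely on Theorem \ref{Ohsugi-Gorenstein}(i), which guarantees that $M_G$ is normal Gorenstein, and on Proposition \ref{Prop-facets-K5-minor-free} together with the cone description (\ref{InEq-facets1})--(\ref{InEq-facets2}), since $G$ is $K_5$-minor-free. Note also that $G$ bipartite implies no edge of $G$ lies on a triangle, and that the hypotheses force every induced cycle of $G$ to have length exactly $4$.

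First I would verify $\xbf \in \inte(M_G)$. Since $M_G$ is normal, it suffices to show that $\xbf \in \gp(M_G)$ and that $\xbf$ lies in the (relative) interior of $\cone(M_G)$. The group condition (\ref{elt-in-group}) holds because for every cycle $C$ of the bipartite graph $G$ the sum $\sum_{e \in E(C)} 1 = |E(C)|$ is even. Interior membership amounts to the strict versions of (\ref{InEq-facets1})--(\ref{InEq-facets2}) with $\alpha = 2$ and all $x_e = 1$: the bounds $0 < 1 < 2$ are immediate, and for any induced $4$-cycle $C$ with $F \subseteq E(C)$ of odd cardinality one checks that $\sum_{f \in F} 1 - \sum_{e \in E(C) \setminus F} 1$ equals $-2 < 0$ when $|F| = 1$ and $2 < 4 = \alpha(|F|-1)$ when $|F| = 3$.

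Finally, let $\xbf_0 = (\ubf, r)$ denote the unique generator with $\inte(M_G) = \xbf_0 + M_G$. Since $\xbf \in \inte(M_G)$, we have $\xbf - \xbf_0 \in M_G$; as every element of $M_G$ has non-negative last coordinate, this forces $r \leq 2$, while $r \geq 1$ because the only element of $M_G$ with last coordinate zero is the origin, which is not in $\inte(M_G)$. If $r = 1$, then $\ubf \in \inte(\cut^{\Box}(G)) \cap \Zb^m$ would need to satisfy $0 < u_e < 1$, impossible for integer coordinates. Hence $r = 2$, and the strict inequalities $0 < u_e < 2$ force $u_e = 1$ for every edge, giving $\xbf_0 = \xbf$. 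The main subtlety is the clean translation between the facet description of $\cut^{\Box}(G)$ and strict interior conditions on its homogenization $\cone(M_G)$; this is where the $K_5$-minor-free hypothesis and the explicit inequalities (\ref{InEq-facets1})--(\ref{InEq-facets2}) are indispensable.
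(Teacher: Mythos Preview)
Your proof is correct and follows essentially the same approach as the paper: verify that $\xbf=(1,\ldots,1,2)$ lies in $\inte(M_G)$ via the strict facet inequalities (\ref{InEq-facets1})--(\ref{InEq-facets2}) for $\alpha=2$ (using that induced cycles have length $4$), and then use the Gorenstein property to conclude it is the generator. The only minor difference is that the paper exhibits $\xbf\in M_G$ directly as $(\delta_{V_1},1)+(\mathbf{0},1)$ using the bipartition $V=V_1\cup V_2$, whereas you appeal to normality and (\ref{elt-in-group}) to get $\xbf\in\gp(M_G)\cap\inte(\cone(M_G))\subseteq M_G$; both routes are valid and the rest of the argument is the same.
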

\begin{proof}
Since  $G$ is bipartite there is a partition of vertices $V$ of $G$ say
$V=V_1\cup V_2$ with $V_1\cap V_2=\emptyset$. All edges of $G$ are then of the form $v_1v_2$ with $v_1\in V_1$ and $v_2 \in V_2$.
Note that
$(1,\ 1,\ldots,\ 1,\ 1) \in \Zb^m$ is a cut vector
corresponding to $V_1$
and
$(0,\ 0,\ldots,\ 0,\ 0) \in \Zb^m$ is a cut vector corresponding to $\emptyset$, which yields
\[
\xbf
=
(1,\ 1,\ldots,\ 1,\ 2)
=
(1,\ 1,\ldots,\ 1,\ 1)
+
(0,\ 0,\ldots,\ 0,\ 1)
\in M_G  \subseteq \Zb^{m+1}.
\]
$\inte(M_G)$ contains no vectors of the form
$(\abf,1)\in \Zb^{m+1}$, which follows, e.g., from (\ref{InEq-facets1})
since $G$ does not contain triangles.
It remains to observe that $\xbf$ is an element of $\inte(M_G)$ to conclude that it is the generator of $\inte(M_G)$,
since we know by assumption that there can be at most one element in $\inte(M_G)$ with last coordinate $2$.

For this we recall again that $G$ contains no triangles and more generally it contains only induced cycles of even length, because it is bipartite. By assumption on the length of induced cycles, the only possible length for such a cycle is $4$.
It follows from (\ref{InEq-facets1}) and (\ref{InEq-facets2}) that $\xbf \in \inte(M_G)$ if and only if $0<x_e < 2$  where $e\in E(G)$ and
\begin{align*}
x_a <&\ x_b+x_c+x_d, & x_a+x_b+x_c<&\ x_d+4,
\\
x_b <&\ x_a+x_c+x_d, & x_a+x_b+x_d<&\ x_c+4,
\\
x_c <&\ x_a+x_b+x_d, & x_a+x_c+x_d<&\ x_b+4,
\\
x_d <&\ x_a+x_b+x_c, & x_b+x_c+x_d<&\ x_a+4
\end{align*}
for each induced cycle $C$ of $G$ of length $4$ with edges $\{a,b,c,d\}$ corresponding to some of the first $m$ coordinates in $\Zb^{m+1}$. This is easily verified for $\xbf$
and concludes the proof.
\end{proof}

\begin{lemma}
\label{canonical_ module_bridgeless_chordal}
With the assumptions of Theorem \ref{Theorem-canonical-module}(ii)
and $E(G)=\{e_1,\dots,e_m\}$
the generator $\xbf$ of the canonical module $\inte(M_G)$ is
\[
\xbf=(2,\ 2,\ldots,\ 2,\ 4)
\in \Zb^{m+1}.
\]
\end{lemma}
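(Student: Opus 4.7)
The plan is to mirror the structure of Lemma~\ref{canonical_module_bipartite}, with triangles playing the role that induced $4$-cycles played there. Three steps suffice: (1) exhibit $\xbf=(2,\dots,2,4)$ as an element of $M_G$; (2) verify $\xbf\in\inte(M_G)$ using the facet inequalities~(\ref{InEq-facets1}) and~(\ref{InEq-facets2}); and (3) show that no element of $\inte(M_G)$ has last coordinate smaller than $4$, and that $\xbf$ is the only element with last coordinate equal to $4$. The Gorenstein hypothesis gives $\inte(M_G)=\xbf_0+M_G$ for a unique generator $\xbf_0$, which must be the element of $\inte(M_G)$ with minimal last coordinate; by (2) and (3) this forces $\xbf_0=\xbf$.

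For step~(1), I would use that $G$ is chordal, hence perfect, so $\chi(G)=\omega(G)$; combined with $K_5$-minor-freeness this gives $\omega(G)\le 4$ and a proper $4$-coloring $V(G)=I_1\sqcup\cdots\sqcup I_4$ (some classes possibly empty). Each $I_j$ being an independent set yields $\delta_{I_j}=\sum_{v\in I_j}\delta_{\{v\}}$; summing the four identities and using the elementary fact $\sum_{v\in V}\delta_{\{v\}}=2\cdot\onebf$ (each edge is counted by its two endpoints) produces $\sum_{j=1}^{4}(\delta_{I_j},1)=\xbf$.

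Step~(2) relies on the observation that every edge of $G$ lies in a triangle: a shortest cycle through a given edge is necessarily induced, and chordality then forces it to have length~$3$. Consequently~(\ref{InEq-facets1}) is vacuous and the induced cycles relevant to~(\ref{InEq-facets2}) are triangles only; direct verification with all $x_e=2$ and $\alpha=4$ shows strictness for both $|F|=1$ and $|F|=3$.

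The main obstacle is step~(3). Given $(\ybf,\alpha)\in\inte(M_G)$ and any triangle $C$ with edges $e_1,e_2,e_3$, each generator $(\delta_A,1)$ crosses $0$ or $2$ edges of $C$, so $y_{e_1}+y_{e_2}+y_{e_3}$ is always even. Combining this parity with the strict form of~(\ref{InEq-facets2}) for $|F|=1$ yields $y_{e_i}\le y_{e_j}+y_{e_k}-2$ for every permutation $(i,j,k)$; pairwise addition of two such inequalities forces $y_{e_i}\ge 2$. The strict case $|F|=3$ together with parity gives $y_{e_1}+y_{e_2}+y_{e_3}\le 2\alpha-2$, hence $6\le 2\alpha-2$ and $\alpha\ge 4$. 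At $\alpha=4$ the bounds $y_{e_i}\ge 2$ and $y_{e_1}+y_{e_2}+y_{e_3}\le 6$ force $y_{e_i}=2$ on every triangle edge; since every edge of $G$ lies in a triangle, $(\ybf,4)=\xbf$, which concludes the uniqueness argument.
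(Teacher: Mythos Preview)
Your proof is correct and takes a genuinely different route from the paper's in two of the three steps.

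For step~(1), the paper does not write $\xbf$ explicitly as a sum of generators; instead it verifies $\xbf\in\gp(M_G)$ via~(\ref{elt-in-group}) and $\xbf\in\Qb_{\ge 0}M_G$ via the facet inequalities, and then invokes normality to conclude $\xbf\in M_G$. Your $4$-coloring argument is more constructive: it actually exhibits $\xbf=\sum_{j=1}^4(\delta_{I_j},1)$ and so establishes $\xbf\in M_G$ without appealing to normality at all. Step~(2) is essentially the same in both proofs. For step~(3), the paper argues by direct case analysis: for $k=2$ and $k=3$ it restricts to a triangle, lists the four possible cut patterns $(0,0,0),(1,1,0),(1,0,1),(0,1,1)$, and checks that no sum of $k$ such vectors satisfies the strict facet inequalities. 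Your parity argument---using that $y_{e_1}+y_{e_2}+y_{e_3}$ is even together with the strict inequalities to force $y_{e_i}\ge 2$ and $\sum y_{e_i}\le 2\alpha-2$---reaches the same conclusion more systematically and simultaneously yields the uniqueness at $\alpha=4$, which the paper instead deduces from the Gorenstein hypothesis alone. Both approaches are short; yours trades a small amount of combinatorial bookkeeping for the outside input that chordal graphs are perfect.
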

\begin{proof}
Set
$\xbf=(2,\ 2,\ldots,\ 2,\ 4)
\in \Zb^{m+1}$.
Since $G$ is bridgeless and chordal,
each edge belongs to a cycle, triangles are the only induced cycles of $G$ and thus each edge belongs to a triangle. At first observe that for any cycle $C$ of $G$
the element $\xbf$ satisfies trivially
$
\sum_{e\in C} x_e  \equiv 0 \modulo 2
$
and thus $\xbf\in\gp(M_G)$ by  (\ref{elt-in-group}).
By (\ref{InEq-facets1}) and (\ref{InEq-facets2})
we have  $\xbf \in\Qb_{\geq 0} M_G$ if and only if
\[
\sum_{f\in F}x_f-
\sum_{e\in E(C)\setminus F}
x_e
\leq 4(|F |-1),
\]
where $C$ ranges over all induced cycles of $G$ and $F\subseteq E(C)$ with $|F|$ odd.
As mentioned above, $C$ has to have length $3$ with edges $\{a,b,c\}$.

Hence, the inequalities
look like $x_a+x_b+x_c\leq 8$
and $x_a\leq x_b+x_c$ and others  by permuting the indices.
We see that $\xbf$ satisfies all desired inequalities. In particular,
\[
\xbf \in\Qb_{\geq 0} M_G\cap \gp(M_G)=M_G,
\]
because $M_G$ is normal by assumption. Since all inequalities are even strictly satisfied,
one obtains that
%\[
$\xbf\in \inte(M_G)$.
%\]
Assume that $\inte(M_G)$ contains a vector
$\ybf=(\abf,\ k)$ for some integer $1\leq k \leq 3$
and $\abf \in \Zb^{m}$.
It is easy to see that $k \geq 2$, e.g., by reducing this to the connected case and using the structure of cut polytopes.
Choose a triangle $C$ of $G$ with edges $\{a,b,c\}$.
Assume that $k=2$. Then by (\ref{InEq-facets1}) and (\ref{InEq-facets2}) we have
\[
y_a+y_b+y_c< 4,\hspace{4pt}
y_a< y_b+y_c,\hspace{4pt}
y_b< y_a+y_c,\hspace{4pt}
y_c< y_a+y_b.
\]
On the other hand by comparing the possible cut vectors
and their coordinates at $a,b,c$, we see that
$\abf$ has to be a sum of exactly two of the vectors
\[
(0,\ 0,\ 0), \hspace{4pt}
(1,\ 1,\ 0), \hspace{4pt}
(1,\ 0,\ 1), \hspace{4pt}
(0,\ 1,\ 1)
\]
and can verify that this is not possible. Hence, $k\geq 3$.
A similar argument with $4$ replaced by $6$ and considering
sums of three of the latter vectors also rules out the possibility $k=3$. This concludes the proof.
\end{proof}

\begin{proof}
[(Proof of Theorem \ref{Theorem-canonical-module})]
This follows from Theorem \ref{Ohsugi-Gorenstein},
Lemma \ref{canonical_module_bipartite} and
Lemma \ref{canonical_ module_bridgeless_chordal}.
\end{proof}
Observe that alternatively one can use also \cite[Theorem 6.33]{Bruns-Gubeladze} to show Theorem \ref{Theorem-canonical-module}.
\begin{example}
\
\begin{enumerate}
\item
Note that for a complete bipartite and $K_5$-minor-free graph every induced cycle is of length $4$. Hence, Theorem \ref{Ohsugi-Gorenstein}
yields that the cut polytope is normal and Gorenstein. By Theorem \ref{Theorem-canonical-module}(i) the generator of  $\inte(M_G)$ is
\[
(1,\ 1,\ldots,\ 1,\ 2).
\]
Note that complete bipartite graphs are
in particular Ferrers graphs (see, e.g., \cite[Example 5.10(3)]{Romer-Madani}) and one can see that the statements
from here also hold in this more general case.
\item
Let $G=G_1 \# G_2$ be a 0- or 1-sum
of two graphs $G_1$ and $G_2$, which are both $K_5$-minor-free and bipartite graphs without induced cycle of length $\geq 6$.
Then $G$ has the same properties as the $G_k$'s and thus the generator $\xbf$ of the canonical module $\inte(M_G)$ is
by Theorem \ref{Theorem-canonical-module}(i)
\[
\xbf=(1,\ 1,\ldots,\ 1,\ 2).
\]
\item
Let $G=G_1 \# G_2$ be a $i$-sum for $i=0,1,2,3$
of two graphs $G_1$ and $G_2$, which are both $K_5$-minor-free and bridgeless chordal graphs.
Then $G$ has the same properties as the $G_k$'s and thus
the generator $\xbf$ of the canonical module $\inte(M_G)$ is
by Theorem \ref{Theorem-canonical-module}(ii)
\[
\xbf=(2,\ 2,\ldots,\ 2,\ 4)
\]
\end{enumerate}
\end{example}
For certain clique-sums one can determine $\inte(M_G)$
for a given graph $G$ explicitly like in the previous example.
Another case of this type is:
\begin{lemma}
\label{zero-sum}
Let $G_1$ and $G_2$ be two $K_5$-minor-free graphs, $G=G_1 \#_0 G_2$ and $W_1, W_2 \subseteq \mathbb{N}$ be two finite sets. Suppose that a system of generators of
$\inte(M_{G_1})$ has last coordinates $\beta_1$ with
$\beta_1 \in W_1$
and a system of generators of
$\inte(M_{G_2})$ has last coordinates $\beta_2$ with
$\beta_2 \in W_2$.
Then $\inte(M_{G})$ has a system of generators with elements of the form
$(\xbf,\ \ybf,\ \beta)$ with
\[
\beta \in W_1\cup W_2,\
\beta\geq \max(\min W_1,\min W_2),\
(\xbf,\ \beta) \in \inte(M_{G_1})
\text{ and }
(\ybf,\ \beta) \in \inte(M_{G_2}),
\]
where
for any choice $\beta\in W_1\cap W_2$,
any generator
$(\xbf,\ \beta) \in \inte(M_{G_1})$
and any generator
$(\ybf,\ \beta) \in \inte(M_{G_2})$
a vector
$(\xbf,\ \ybf,\ \beta)$ is part of that system.
\end{lemma}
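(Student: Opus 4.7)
The plan is to exploit the fiber product structure forced by the $0$-sum and then transport the given generating systems of $\inte(M_{G_1}), \inte(M_{G_2})$ componentwise.

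First I would establish a fiber product identity. Since $G = G_1 \#_0 G_2$ is the disjoint union, every $A \subseteq V(G)$ decomposes uniquely as $A = A_1 \cup A_2$ with $A_i \subseteq V(G_i)$, and the absence of edges between $G_1$ and $G_2$ forces $\delta_A = (\delta_{A_1}, \delta_{A_2})$. Hence every generator of $M_G$ lies in
\[
M_{G_1} \times_\Nb M_{G_2} := \{(\xbf, \ybf, \alpha) : (\xbf, \alpha) \in M_{G_1},\ (\ybf, \alpha) \in M_{G_2}\}.
\]
The reverse inclusion uses integer transportation: given $(\xbf, \alpha) = \sum_{A_1} c_{A_1}(\delta_{A_1}, 1)$ and $(\ybf, \alpha) = \sum_{A_2} d_{A_2}(\delta_{A_2}, 1)$ with common total $\alpha$, integer coefficients $e_{A_1, A_2} \in \Nb$ with marginals $c_{A_1}, d_{A_2}$ exist, giving $(\xbf, \ybf, \alpha) = \sum e_{A_1, A_2}(\delta_{A_1}, \delta_{A_2}, 1) \in M_G$. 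Because $\cut^{\Box}(G) = \cut^{\Box}(G_1) \times \cut^{\Box}(G_2)$, at each height $\beta > 0$ the relative interior of $\cone(M_G)$ splits as a product, which together with the fiber product identity above yields
\[
\inte(M_G) = \{(\xbf, \ybf, \beta) : (\xbf, \beta) \in \inte(M_{G_1}),\ (\ybf, \beta) \in \inte(M_{G_2})\}.
\]

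Next, given any $(\xbf, \ybf, \beta) \in \inte(M_G)$ I would use the hypothesised systems to write
\[
(\xbf, \beta) = (\xbf_0, \beta_1) + (\mathbf{u}, \beta - \beta_1), \qquad (\ybf, \beta) = (\ybf_0, \beta_2) + (\mathbf{v}, \beta - \beta_2)
\]
with $(\xbf_0, \beta_1), (\ybf_0, \beta_2)$ generators (so $\beta_i \in W_i$) and the residuals in $M_{G_1}, M_{G_2}$. Assume $\beta_1 \geq \beta_2$ (the other case is symmetric), so $\beta_1 \geq \max(\min W_1, \min W_2)$. The residual $(\mathbf{v}, \beta - \beta_2) = \sum c_{A_2}(\delta_{A_2}, 1)$ is a multiset of level-one generators and can be partitioned into a sub-sum of total count $\beta_1 - \beta_2$ and one of total count $\beta - \beta_1$; this gives $\mathbf{v} = \mathbf{v}' + \mathbf{v}''$ with $(\mathbf{v}', \beta_1 - \beta_2), (\mathbf{v}'', \beta - \beta_1) \in M_{G_2}$. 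Setting $\ybf_0' = \ybf_0 + \mathbf{v}'$, the fact that $\inte(M_{G_2})$ is a monoid ideal of $M_{G_2}$ yields $(\ybf_0', \beta_1) \in \inte(M_{G_2})$, and
\[
(\xbf, \ybf, \beta) = (\xbf_0, \ybf_0', \beta_1) + (\mathbf{u}, \mathbf{v}'', \beta - \beta_1)
\]
expresses $(\xbf, \ybf, \beta)$ as a base of the asserted form ($\beta_1 \in W_1 \subseteq W_1 \cup W_2$, both projections in the corresponding interiors, and $\beta_1 \geq \max(\min W_1, \min W_2)$) plus an element of $M_G$. Restricting to the finitely many minimal generators of $\inte(M_G)$ (which exist by a Gordan-type argument) yields a finite system of the required form.

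Finally, for $\beta \in W_1 \cap W_2$ and any generators $(\xbf, \beta)$ of $\inte(M_{G_1})$ and $(\ybf, \beta)$ of $\inte(M_{G_2})$, I would verify that $(\xbf, \ybf, \beta)$ is itself a minimal generator of $\inte(M_G)$, forcing its membership in the system: any decomposition $(\xbf, \ybf, \beta) = (\xbf_0, \ybf_0, \beta_0) + (\mathbf{u}, \mathbf{v}, \beta - \beta_0)$ with $\beta - \beta_0 > 0$ projects to $(\xbf, \beta) = (\xbf_0, \beta_0) + (\mathbf{u}, \beta - \beta_0)$ with $(\xbf_0, \beta_0) \in \inte(M_{G_1})$ and $(\mathbf{u}, \beta - \beta_0) \in M_{G_1}$, contradicting the minimality of $(\xbf, \beta)$ as a generator. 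The main obstacle is the fiber product identification in the first step together with the height-matching splitting argument in the second step; both are elementary once the structure is set up, but demand careful bookkeeping to align levels on the $G_2$-side.
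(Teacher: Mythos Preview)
Your approach is essentially the same as the paper's: identify $M_G$ as the fiber product $M_{G_1}\times_{\Nb} M_{G_2}$, project an interior element to both factors, pick generators $(\xbf_0,\beta_1)$, $(\ybf_0,\beta_2)$ from the given systems, and then pad the side with the smaller height by peeling off level-one generators from the residual so the two heights match. The paper does exactly this (padding the $G_1$-side when $\beta_1<\beta_2$, you pad the $G_2$-side when $\beta_1\geq\beta_2$; symmetric). One minor difference: you obtain the interior splitting from the product decomposition $\cut^\Box(G)=\cut^\Box(G_1)\times\cut^\Box(G_2)$, whereas the paper invokes the $K_5$-minor-free facet description (Proposition~\ref{Prop-facets-K5-minor-free}); both give the same conclusion.

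Your final paragraph, however, is off. You argue that for $\beta\in W_1\cap W_2$ the vector $(\xbf,\ybf,\beta)$ must be a \emph{minimal} generator of $\inte(M_G)$, by projecting a hypothetical decomposition to $M_{G_1}$ and contradicting minimality of $(\xbf,\beta)$. But the lemma only assumes $(\xbf,\beta)$ lies in some given system of generators of $\inte(M_{G_1})$, not that it is irreducible; nothing prevents $(\xbf,\beta)$ from decomposing further in $\inte(M_{G_1})+M_{G_1}$. Fortunately this argument is unnecessary: the constructed system is simply the collection of all witnesses $(\xbf_0,\ybf_0',\beta_1)$ (or their symmetric counterparts) produced by the algorithm, and in the case $\beta_1=\beta_2=\beta\in W_1\cap W_2$ the witness is literally $(\xbf_0,\ybf_0,\beta)$ with both components the chosen generators, so those vectors are automatically included. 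The paper handles the $\beta_1=\beta_2$ case in precisely this way, without any minimality claim.
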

\begin{proof}
Let $G=G_1\#_0 G_2$ be a 0-sum of two graphs $G_1$ and $G_2$.
The cut vectors of $G$ are exactly of the form
$(\delta_{A_1},\ \delta_{A_2})$
for $A_1\subseteq V(G_1)$ and $A_2\subseteq V(G_2)$.
Thus,
\[
M_G
=
\{
(\xbf,\ \ybf,\ \alpha):
(\xbf,\ \alpha)\in M_{G_1}
\text{ and }
(\ybf,\ \alpha)\in M_{G_2}
\}.
\]
Suppose that
$
(\xbf,\ \ybf,\ \alpha)\in \inte(M_G).
$
Using Proposition \ref{Prop-facets-K5-minor-free}
one sees that
$(\xbf,\ \alpha)\in \inte(M_{G_1})$
and
$(\ybf,\ \alpha)\in \inte(M_{G_2})$.
There exists a generator
$(\Tilde{\xbf},\ \beta_1)$ of $\inte(M_{G_1})$
and a generator
$(\Tilde{\ybf},\ \beta_2)$ of $\inte(M_{G_2})$
such that
\[
(\vbf,\ \alpha-\beta_1)
=
(\xbf,\ \alpha)
-(\Tilde{\xbf},\ \beta_1) \in M_{G_1}
\text{ and }
(\wbf,\ \alpha-\beta_2)
=(\ybf,\ \alpha)
-(\Tilde{\ybf},\ \beta_2) \in M_{G_2},
\]
where $\beta_1\in W_1$ and $\beta_2\in W_2$. Assume that $\beta_1=\beta_2=\beta$. Then
\[
(\xbf,\ \ybf,\ \alpha)
=
(\Tilde{\xbf},\ \Tilde{\ybf},\ \beta)
+
(\vbf,\ \wbf,\ \alpha-\beta)
\in
(\Tilde{\xbf},\ \Tilde{\ybf},\ \beta)
+M_G
\]
with
$
(\Tilde{\xbf},\ \Tilde{\ybf},\ \beta)
\in \inte(M_G)$
and
$(\vbf,\ \wbf,\ \alpha-\beta)\in M_G$.

Next suppose that $\beta_1<\beta_2\leq \alpha$.
Note that $M_{G_1}$ is generated by certain elements
of the form
$(\abf_1,\ 1),\dots,(\abf_n,\ 1)$.
There exists a presentation
$
(\vbf,\ \alpha-\beta_1)
=
\sum_{k=1}^{\alpha-\beta_1}
(\abf_{i_k},\ 1)
$
with $i_k \in [n]$. Set
\[
(\Tilde{\zbf},\ \beta_2)
=
(\Tilde{\xbf},\ \beta_1)
+
\sum_{k=1}^{\beta_2-\beta_1}
(\abf_{i_k},\ 1)
\in \inte(M_{G_1}).
\]
Then
$
(\xbf,\ \alpha)
\in (\Tilde{\zbf},\ \beta_2)
+M_{G_1}
$
and thus, as above,
$
(\xbf,\ \ybf,\ \alpha)
\in
(\Tilde{\zbf},\ \Tilde{\ybf},\ \beta_2)
+M_G.
$
The case $\beta_1>\beta_2$ is treated in the same way
and this concludes the proof.
\end{proof}
This rather technical lemma can for example be used in the following situation:

\begin{example}
\label{canonical-module-chordal}
Assume that $G$ is a $K_5$-minor-free and chordal graph.
The property chordal is by  \cite[Proposition 5.5.1]{Diestel} equivalent to the fact that $G$ is a clique-sum of complete graphs.
Using the $K_5$-minor-freeness this means
that $G$ has to be a clique-sum of certain $K_2$, $K_3$ and $K_4$.
Reordering this we see that $G$ is a $0$-sum
of certain $K_5$-minor-free bridgeless chordal graphs (where no $K_2$ is allowed to use) and some trees. Thus,
\[
G=G_1\#_0 T_1\#_0 G_2\#_0T_2\#_0\dots G_n\#_0T_n,
\]
where $G_i$ are $K_5$-minor-free bridgeless chordal graphs and  $T_i$ are trees.

For a tree $T$ it follows from (\ref{elt-in-group}),
(\ref{InEq-facets1}) and (\ref{InEq-facets2})
that its elements $(\xbf,\alpha)\in \inte(M_T)$
have to satisfy $1\leq x_e\leq \alpha-1$ for $e\in E(T)$ and $x_e\in \Zb$. As a (not minimal) system of generators
we can choose
$(\xbf,4)$ with $1\leq x_e\leq 3$, $x_e\in \Zb$
together with all elements in $\inte(M_T)$ with $2\leq \alpha \leq 3$. Let $\twobf_k=(2,\ldots,2)\in \Zb^{E(G_k)}$ for $k\in [n]$.
The observation for trees, Lemma \ref{canonical_ module_bridgeless_chordal} and Lemma \ref{zero-sum} imply that $\inte(M_G)$ has the following system of generators
\[
\biggl\{
(\twobf_1,\ \xbf_1,\ldots,\ \twobf_n,\ \xbf_n,\ 4)
: 1\leq (\xbf_k)_e \leq 3,\ (\xbf_k)_e \in \Zb \text{ for } e\in E(T_k)
\biggr\}.
\]
\end{example}
%--------------------------------------------------------------------------------
% new
%--------------------------------------------------------------------------------
\begin{corollary}
\label{zero-sum-corollary}
 Let $G_1$ be a graph which is both $K_5$-minor-free and bridgeless chordal and $G_2$ be a graph which is both $K_5$-minor-free and bipartite without induced cycle of length $\geq 6$. Let $G=G_1\#_0 G_2$.
 Then $\inte(M_{G})$ has a system of generators of the form
\[
\bigl\{
(\twobf,\ \ybf,\ 4):\
(\twobf,\ 4) \in \inte(M_{G_1})\
\text{ and }\
(\ybf,\ 4) \in \inte(M_{G_2})
\bigr\}.
\]
\end{corollary}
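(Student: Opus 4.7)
My plan is to combine Theorem~\ref{Theorem-canonical-module} with Lemma~\ref{zero-sum}. First, since $G_1$ is $K_5$-minor-free and bridgeless chordal, Theorem~\ref{Theorem-canonical-module}(ii) tells me that $\inte(M_{G_1})$ is generated by the single element $(\twobf,4)$; and since $G_2$ is $K_5$-minor-free, bipartite, and has no induced cycle of length $\geq 6$, Theorem~\ref{Theorem-canonical-module}(i) tells me that $\inte(M_{G_2})$ is generated by $(\onebf,2)$. Hence in the notation of Lemma~\ref{zero-sum} I may take $W_1=\{4\}$ and $W_2=\{2\}$.

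I would then feed these singletons into Lemma~\ref{zero-sum}. The lemma supplies a generating set for $\inte(M_G)$ consisting of triples $(\xbf,\ybf,\beta)$ with $\beta\in W_1\cup W_2=\{2,4\}$, $\beta\geq\max(\min W_1,\min W_2)=4$, $(\xbf,\beta)\in\inte(M_{G_1})$, and $(\ybf,\beta)\in\inte(M_{G_2})$. The inequality immediately forces $\beta=4$, so I am left with generators of the form $(\xbf,\ybf,4)$.

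To finish, I would pin down $\xbf$. Because $\inte(M_{G_1})=(\twobf,4)+M_{G_1}$, the equation $(\xbf,4)=(\twobf,4)+(\abf,0)$ must hold for some $(\abf,0)\in M_{G_1}$. Since $M_{G_1}$ is generated by vectors whose last coordinate equals $1$, the only element of $M_{G_1}$ with last coordinate $0$ is the zero vector, so $\xbf=\twobf$. The only remaining freedom is then in $\ybf$ subject to $(\ybf,4)\in\inte(M_{G_2})$, which is precisely the set described in the corollary.

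The only real wrinkle is the asymmetric case split inside the proof of Lemma~\ref{zero-sum}: since here $\min W_1>\min W_2$, the only case that is triggered is the one in which the smaller generator $(\onebf,2)$ of $\inte(M_{G_2})$ has to be lifted to a height-$4$ element by adjoining two generators of $M_{G_2}$. This bookkeeping is routine and no further combinatorial input beyond the preceding lemmas is required.
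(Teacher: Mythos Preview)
Your proof is correct and follows exactly the same approach as the paper, which simply states that the corollary follows from Theorem~\ref{Theorem-canonical-module} and Lemma~\ref{zero-sum}. You have merely spelled out the details: identifying $W_1=\{4\}$, $W_2=\{2\}$, deducing $\beta=4$, and observing that $(\xbf,4)\in\inte(M_{G_1})=(\twobf,4)+M_{G_1}$ forces $\xbf=\twobf$ since $M_{G_1}$ has no nonzero degree-zero elements. Your final paragraph about the internal case split of Lemma~\ref{zero-sum} is unnecessary, since you are applying the lemma as a black box, but it does no harm.
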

\begin{proof}
The corollary follows from Theorem \ref{Theorem-canonical-module}
and Lemma \ref{zero-sum}.
\end{proof}
\begin{lemma}
\label{one-sum}
 Let $G_1$ be a graph which is both $K_5$-minor-free and bridgeless chordal and $G_2$ be a graph which is both $K_5$-minor-free and bipartite without induced cycle of length $\geq 6$. Let $G=G_1\#_1 G_2$.
 Then $\inte(M_{G})$ has a system of generators with elements of the form
\[
(2,\ \twobf,\  \ybf,\ 4)
\text{ such that }\
(2,\ \twobf,\  4) \in \inte(M_{G_1})\
\text{ and }\
(2,\ \ybf,\ 4) \in \inte(M_{G_2}).
\]
\end{lemma}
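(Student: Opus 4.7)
The plan is to combine the explicit Gorenstein descriptions of $\inte(M_{G_1})$ and $\inte(M_{G_2})$ from Theorem \ref{Theorem-canonical-module} with a fiber-product description of $M_G$ along the shared edge. Let $e$ be the shared edge of the $1$-sum and order the edges of $G$ so that $e$ comes first; then elements of $\Zb^{|E(G)|+1}$ read $(z,\xbf',\ybf',\alpha)$, with $z$ the value at $e$ and $\xbf'$, $\ybf'$ the remaining $G_1$- and $G_2$-coordinates. The first step is to verify
\[
M_G = \{(z,\xbf',\ybf',\alpha) : (z,\xbf',\alpha)\in M_{G_1},\ (z,\ybf',\alpha)\in M_{G_2}\},
\]
with the analogous equality for the interiors. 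This rests on the combinatorial fact that every induced cycle of $G=G_1\#_1 G_2$ is an induced cycle of $G_1$ or of $G_2$: since $V(G_1)\cap V(G_2)$ consists exactly of the two endpoints of $e$, a cycle visiting vertices of both sides either contains $e$ (and then removing $e$ confines the remainder to a single $G_i$) or uses $e$ as a chord. Combined with the cycle condition \eqref{elt-in-group} and Ohsugi's facet description \eqref{InEq-facets1}--\eqref{InEq-facets2}, this yields both the monoid and interior equalities.

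Given $(z,\xbf',\ybf',\alpha)\in\inte(M_G)$, Theorem \ref{Theorem-canonical-module}(ii) applied to $G_1$ writes $\inte(M_{G_1})=(2,\twobf,4)+M_{G_1}$, and hence $(z-2,\xbf'-\twobf,\alpha-4)\in M_{G_1}$. In particular $\alpha\geq 4$ and $2\leq z\leq \alpha-2$, where the upper bound uses that degree-$k$ elements of $M_{G_1}$ have coordinates in $[0,k]$. The main task is then to find a generator $(2,\ybf,4)\in\inte(M_{G_2})$ with first coordinate $2$ and such that $(z-2,\ybf'-\ybf,\alpha-4)$ lies in $M_{G_2}$.

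For this, Theorem \ref{Theorem-canonical-module}(i) gives $(z-1,\ybf'-\onebf,\alpha-2)\in M_{G_2}$, which I decompose as a sum $\sum_{i=1}^{\alpha-2}(\delta_{A_i},1)$ of cut vectors of $G_2$ whose $e$-coordinates sum to $z-1$. Since $z-1\in[1,\alpha-3]$, strictly between $0$ and $\alpha-2$ of the $(\delta_{A_i})_e$ equal $1$, so I can select indices $i_0,i_1$ with $(\delta_{A_{i_0}})_e=0$ and $(\delta_{A_{i_1}})_e=1$. Setting
\[
(2,\ybf,4) \;:=\; (1,\onebf,2) + (\delta_{A_{i_0}},1) + (\delta_{A_{i_1}},1)
\]
produces a degree-$4$ element of $\inte(M_{G_2})$ with first coordinate $2$, and the leftover cut vectors sum to $(z-2,\ybf'-\ybf,\alpha-4)\in M_{G_2}$. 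By the fiber-product description, $(z,\xbf',\ybf',\alpha) - (2,\twobf,\ybf,4)\in M_G$, so every element of $\inte(M_G)$ lies in $(2,\twobf,\ybf,4) + M_G$ for a generator of the stated form. The principal difficulty is this $G_2$-side construction, and the crucial input is the bound $z\leq \alpha-2$ inherited from the Gorenstein structure of $M_{G_1}$, which provides exactly the slack needed to pick two cut vectors with opposite $e$-coordinates.
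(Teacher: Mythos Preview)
Your proposal is correct and follows essentially the same approach as the paper's proof: both establish the fiber-product description of $M_G$ along the shared edge, use Theorem~\ref{Theorem-canonical-module} to peel off $(2,\twobf,4)$ on the $G_1$ side and $(1,\onebf,2)$ on the $G_2$ side, and then exploit the bound $z\le \alpha-2$ to select two cut vectors of $G_2$ with opposite $e$-coordinates and assemble the required degree-$4$ interior element. If anything, you are slightly more explicit than the paper in justifying the lower bound $z\ge 2$ (needed to guarantee a cut vector with $e$-coordinate $1$) and in arguing why induced cycles of the $1$-sum lie in one summand.
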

\begin{proof}
Assume that with respect to a suitable ordering
the common edge $e$ of $G_1$ and $G_2$,
at which the 1-sum is performed, is the first edge of $E(G)$,
then the remaining edges from $E(G_1)$ follow
and finally we see the edges from $E(G_2)$.
Observe that
\[
M_G
=
\{
(\gamma,\ \xbf,\ \ybf,\ \alpha):
(\gamma,\ \xbf,\ \alpha)\in M_{G_1}
\text{ and }
(\gamma,\ \ybf,\ \alpha)\in M_{G_2}
\}.
\]
Suppose that
$
(\gamma,\ \xbf,\  \ybf,\ \alpha)\in \inte(M_G).
$
It follows from Proposition \ref{Prop-facets-K5-minor-free}
that
$(\gamma,\ \xbf,\  \alpha)\in \inte(M_{G_1})$
and
$(\gamma, \ \ybf,\ \alpha)\in \inte(M_{G_2})$. Hence,  using Theorem \ref{Theorem-canonical-module},
\[
(\gamma,\ \xbf,\  \alpha)
=
(2,\ \twobf,\  4)
+
(\gamma-2,\ \vbf,\  \alpha-4),
\text{ where }
(\gamma-2,\ \vbf,\ \alpha-4)
 \in M_{G_1}.
 \]
In particular, one obtains $\gamma-2 \leq \alpha-4$ by (\ref{InEq-facets2}) and thus
$\gamma \leq  \alpha-2$. We see also that
 \[
(\gamma,\ \ybf,\ \alpha)
=
(1,\ \onebf,\  2)
+(\gamma-1,\ \wbf,\ \alpha-2),
\text{ where }
(\gamma-1,\ \wbf,\ \alpha-2)
\in M_{G_2}.
\]
Recall that the affine monoid $M_{G_1}$ is generated by elements of the form
\[
(\delta^1_1(e),\ \abf_1,\  1), \ldots, (\delta^1_{n_1}(e),\ \abf_{n_1},\  1),
\]
and the affine monoid $M_{G_2}$ is generated by elements
of the form
\[
(\delta^2_1(e),\ \bbf_1,\ 1), \ldots, (\delta^2_{n_2}(e),\ \bbf_{n_2}, \ 1).
\]
There exists a presentation,
 $(\gamma-1,\ \wbf,\ \alpha-2)=\sum_{k=1}^{\alpha-2}
(\delta^2_{i_k}(e),\ \bbf_{i_k},\ 1)
\text{ with }
i_k \in [n]$.
As noted above $\gamma\leq \alpha-2$. Hence, $ \gamma-1< \alpha-2$
and there must exist  $j$  and $j'$ such that $\delta^2_{i_j}(e)=0$ and
$\delta^2_{i_{j'}}(e)=1$.
Set
\[
(2,\ \Tilde{\zbf},\ 4)
=(1,\ \onebf,\ 2)
+(0,\ \bbf_{i_j},\ 1)
+(1,\  \bbf_{i_{j'}},\ 1)
\in \inte(M_{G_2}).
\]
Then
$
(\gamma,\ \ybf, \ \alpha)
\in (2,\ \Tilde{\zbf},\ 4)
+M_{G_2}
$
and thus,
$
(\gamma,\ \xbf,\  \ybf,\ \alpha)
\in
(2,\ \twobf,\   \Tilde{\zbf},\ 4)
+M_G.
$
\end{proof}
\begin{example}
Let $G$ be a ring graph without induced cycle of length $\geq 5$. Since it is a $0$- or $1$- sum of cycles and trees, $G$ can be obtained from a sequence of $0$- or $1$- sum of triangles, squares, and trees.
Recall that $\inte(M_C)$ is generated by
$(2,\ 2,\ 2,\ 4)$ for a triangle $C$ and $\inte(M_D)$ is generated by  $(1,\ 1,\ 1,\ 1,\ 2)$ for a square $D$. Then by Corollary \ref{zero-sum-corollary} and Lemma \ref{one-sum},
the canonical module $\inte(M_G)$ has generators whose last coordinates are $4$.

In Example \ref{canonical-module-chordal}, for a tree $T$, we have discussed
the structure of elements $(\xbf,4)\in \inte(M_T)$. For a square $D$, by analyzing its cut vectors, it can be seen that
$(\xbf,\ 4) \in \inte(M_D)$ if and only if for each $e\in E(D)$ we have $1\leq x_e\leq 3$ with $x_e\in \Zb$, and either all entries of $\xbf$ are equal or they are pairwise equal. Hence, again using Corollary \ref{zero-sum-corollary} and Lemma \ref{one-sum},  one gets a complete description of a set of generators of $\inte(M_G)$.
\end{example}
%--------------------------------------------------------------------------------
\section{Castelnuovo--Mumford regularity of cut polytopes}
\label{Section-6}

In this section we study the Castelnuovo--Mumford regularity of cut polytopes,
i.e.  the Castelnuovo--Mumford regularity of their cut algebras.
Let us recall the definition. For a polynomial ring
$R=\Kb[x_1,\ldots,x_n]$ and a finitely generated graded
$R$-module $M\neq 0$ its  \emph{graded Betti numbers} are
\[
\beta_{i,j}^R(M):=\dim_{\Kb} \Tor_i^R(\Kb,M)_j \text{ for } i=0,\dots,n \text{ and } j\in \Zb.
\]
The \emph{Castelnuovo--Mumford regularity}  of $M$ is defined as
\[
\reg_R M =\max\{j-i:\beta_{i,j}^R(M)\neq 0\}.
\]
For a graph $G$ with cut algebra $\Kb[G]$  and cut ideal $I_G$
we determine their regularities always with respect to the polynomial ring
$S_G$ as defined in Section \ref{Section-2}.
To simplify the notation we omit $S_G$ as an index and simply write
$\reg \Kb[G]$ and $\reg I_G$ for the regularities. Observe that by
\cite[Proposition 3.2]{Romer-Madani} we have $(I_G)_1=0$ if and only if $G$ is connected.
So in this case $\reg I_G \geq 2$ or $I_G=0$. Recall that $\dim \Kb[G]=|E(G)|+1$.

In the following we need the following useful fact.
Let $T=S/J$ be a standard graded algebra where $S$ is a polynomial ring and $J$ is a graded ideal.
If $J\neq 0$, then it follows from the definition that
\[
\reg_S T =\reg J -1.
\]
If $T$ is Cohen--Macaulay with graded canonical module $\omega_T$
and  (Krull-)dimension $d$, then (e.g., by \cite[Equation (6.6)]{Bruns-Gubeladze})
\begin{equation}
\label{Eq-reg}
\reg_S T = d- \min\{i\in \Zb:\ (\omega_T)_i \neq 0 \}.
\end{equation}

At first we discuss a general lower bound for the regularity in the normal case.
\begin{proposition}
\label{lower-bound}
Let $G=(V,E)$ be a graph such that $\cut^{\Box}(G)$ is normal. Then
\[
\reg \Kb[G]\geq |E|-3.
\]
\end{proposition}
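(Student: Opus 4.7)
The plan is to apply equation (\ref{Eq-reg}) to the cut algebra. Since $\cut^{\Box}(G)$ is normal, Hochster's theorem guarantees that $\Kb[G]$ is Cohen--Macaulay of dimension $|E|+1$, and by the Danilov--Stanley description the graded canonical module $\omega_{\Kb[G]}$ is the $\Kb$-span of the monomials $X^{(\xbf,\alpha)}$ with $(\xbf,\alpha)\in \inte(M_G)$, where the grading is given by the last coordinate. Consequently
\[
\reg \Kb[G] = (|E|+1) - \min\{\alpha\in\Zb:\ \inte(M_G)\text{ contains some }(\xbf,\alpha)\},
\]
so the proposition reduces to exhibiting a single element of $\inte(M_G)$ whose last coordinate is at most $4$.

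My candidate is $\xbf=(2,2,\ldots,2,4)\in \Zb^{|E|+1}$. To justify it, I first note that normality forces $G$ to be $K_5$-minor-free: by Theorem \ref{normality-Ohsugi}(i) normality is minor-closed, and $\cut^{\Box}(K_5)$ is not normal by Theorem \ref{Theorem-K5}. Hence the facet/cone description (\ref{InEq-facets1})--(\ref{InEq-facets2}) is available. Membership in the lattice is trivial from (\ref{elt-in-group}) since every coordinate is even, so $\sum_{e\in C}x_e\equiv 0\pmod 2$ for every cycle $C$. For the strict inequalities describing $\inte(\cone(M_G))$ one checks (\ref{InEq-facets1}) as $0<2<4$, and (\ref{InEq-facets2}) for an induced cycle $C$ and odd $F\subseteq E(C)$ becomes
\[
2|F|-2(|C|-|F|)=4|F|-2|C| \;<\; 4(|F|-1),
\]
which simplifies to $|C|>2$ and hence holds for every induced cycle.

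Thus $\xbf \in \gp(M_G)\cap \inte(\cone(M_G))$. Using normality of $M_G$, i.e.\ $M_G = \gp(M_G)\cap \cone(M_G)$, we conclude $\xbf \in M_G$, and combined with $\xbf \in \inte(\cone(M_G))$ this yields $\xbf\in \inte(M_G)$. Plugging into the reduction above gives $\reg \Kb[G] \geq (|E|+1)-4 = |E|-3$.

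There is no real obstacle here: the argument is essentially a direct verification, and the only step one must be careful with is invoking the $K_5$-minor-free description of the cone, which is precisely where the normality hypothesis enters (beyond giving Cohen--Macaulayness and the equality $M_G=\gp(M_G)\cap \cone(M_G)$).
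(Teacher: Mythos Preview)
Your proof is correct and follows essentially the same route as the paper: both exhibit the element $(2,\ldots,2,4)$, verify via (\ref{elt-in-group}), (\ref{InEq-facets1}), (\ref{InEq-facets2}) that it lies in $\inte(M_G)$, and then apply (\ref{Eq-reg}). The only cosmetic difference is that you make the $K_5$-minor-freeness of $G$ explicit before invoking the facet description, whereas the paper leaves this implicit.
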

\begin{proof}
Set
\[
\vbf=(\xbf,\ \alpha)
=
(2,\ 2,\ \ldots,\ 2,\ 4)\in \Zb^{|E|+1}.
\]
Note that $\vbf \in \gp(M_G)$ by (\ref{elt-in-group}).
Trivially $\vbf$ satisfies strictly the inequalities in (\ref{InEq-facets1}). For those in (\ref{InEq-facets2}) let us consider
an induced cycle $C$ of $G$ of length $n\geq 3$ and let $F\subseteq E(C)$ be an odd subset.
For $\vbf$ we compute
\[
\sum_{f\in F}x_f-
\sum_{e\in E(C)\setminus F}
x_e
=
2
|F|
-
2(n-|F|)
<
4(|F |-1).
\]
Thus, also the inequalities in (\ref{InEq-facets2}) are strictly satisfied
and this implies
\[
\vbf\in \Qb_{\geq 0}(M_G) \cap \inte \bigl(\cone(M_G)\bigr).
\]
By assumption $\cut^{\Box}(G)$ is normal.
Hence, $\Qb_{\geq 0}(M_G)\cap \gp(M_G)=M_G$ and we get
\[
\vbf\in M_G \cap \inte \bigl(\cone(M_G)\bigr)=\inte(M_G).
\]
Let $\omega$ be the canonical module of $\Kb[G]$.
Then $\vbf\in \inte(M_G)$ implies $\omega_4\neq 0$.
Using (\ref{Eq-reg}) this yields
\[
\reg \Kb(G)
=
\dim \Kb[G]- \min\{i\in \Zb:\ \omega_i \neq 0 \}
\geq
|E|+1-4=|E|-3.
\]
\end{proof}

For our next main result we observe before:

\begin{lemma}
\label{regularity-bipartite}
Let $G=(V,E)$ be a bipartite graph with $|E|\geq 1$ such that $\cut^{\Box}(G)$ is normal.
Then
\[
\reg \Kb[G]\geq |E|-1.
\]

\end{lemma}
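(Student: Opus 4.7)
The plan is to sharpen the argument of Proposition \ref{lower-bound} by exhibiting a lattice point in $\inte(M_G)$ whose last coordinate equals $2$ rather than $4$. By the Danilov--Stanley description of the canonical module $\omega$ of $\Kb[G]$ (available since normality makes $M_G$ Cohen--Macaulay), any such witness in degree $k$ yields $\omega_k\neq 0$, and then the formula (\ref{Eq-reg}) gives $\reg\Kb[G]\geq (|E|+1)-k$. For $k=2$ this is precisely $|E|-1$.

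First I would reduce to the $K_5$-minor-free setting: because $\cut^{\Box}(G)$ is normal, Theorem \ref{normality-Ohsugi}(i) together with Theorem \ref{Theorem-K5} forces $G$ to be $K_5$-minor-free, so Proposition \ref{Prop-facets-K5-minor-free} applies and an element of $\gp(M_G)$ lies in $\inte(M_G)$ exactly when the inequalities (\ref{InEq-facets1}) and (\ref{InEq-facets2}) are all satisfied strictly. The natural candidate is
\[
\vbf=(1,\ 1,\ \ldots,\ 1,\ 2)\in\Zb^{|E|+1}.
\]
Membership in $M_G$ is immediate: fixing a bipartition $V=V_1\cup V_2$, one has $\vbf=(\delta_{V_1},1)+(\delta_\emptyset,1)$. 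For strictness I would check the two families of inequalities separately. Since $G$ is bipartite, it is triangle-free, so (\ref{InEq-facets1}) applies to every edge and reads $0<1<2$. For (\ref{InEq-facets2}), every induced cycle $C$ of $G$ has even length $n\geq 4$; for an odd subset $F\subseteq E(C)$ the left-hand side evaluates at $\vbf$ to $2|F|-n$ while the right-hand side is $2(|F|-1)$, so strictness reduces to $n>2$, which holds. Hence $\vbf\in\inte(M_G)$, giving $\omega_2\neq 0$ and the claimed bound.

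I do not anticipate a serious obstacle: the one substantive computation is the bipartite cycle inequality, where strictness is exactly the absence of triangles. The remaining ingredients—Theorems \ref{normality-Ohsugi}(i) and \ref{Theorem-K5}, Proposition \ref{Prop-facets-K5-minor-free}, Danilov--Stanley, and (\ref{Eq-reg})—are imported directly from earlier in the paper. The only mild caveat worth recording is that the construction of $\vbf$ already uses the bipartition, so normality enters only through the $K_5$-minor-free reduction and through the canonical-module description.
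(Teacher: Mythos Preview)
Your proof is correct and follows essentially the same route as the paper: exhibit $\vbf=(1,\ldots,1,2)$ as a sum of the two cut vectors $(\delta_{V_1},1)+(\delta_\emptyset,1)$, verify strictness of (\ref{InEq-facets1}) and (\ref{InEq-facets2}) using that every induced cycle has even length $n\geq 4$, and then invoke Danilov--Stanley together with (\ref{Eq-reg}). The one small addition in your write-up is the explicit reduction to the $K_5$-minor-free case via Theorem~\ref{normality-Ohsugi}(i) and Theorem~\ref{Theorem-K5}, which the paper leaves implicit but which is indeed needed for the facet description of Proposition~\ref{Prop-facets-K5-minor-free} to be exhaustive.
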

\begin{proof}
Let $\omega$ be the canonical module of $\Kb[G]$.
As in the beginning of the proof of Lemma \ref{canonical_module_bipartite}
we see that $\xbf=(1,\ \dots,\ 1,\ 2)\in M_G$.
We claim that $\xbf \in \inte(M_G)$.
Let $C$ be an induced cycles of $G$ of even length $n\geq 4$
and $F\subseteq E(C)$ with $|F|$ odd.
Note that $|F|-(n-|F|)< 2(|F|-1)$ and thus
\[
\sum_{f\in F} x_f -\sum_{e\in E(C)} x_e < 2(|F|-1).
\]
Moreover,  $0<x_e < 2$  where $e\in E(G)$.
Hence, it follows from (\ref{InEq-facets1}) and (\ref{InEq-facets2})
that $\xbf \in \inte(M_G)$ and so $\omega_2\neq 0$.
Using Equation (\ref{Eq-reg}) this yields
\[
\reg \Kb[G] \geq \dim\Kb[G]-2 =|E|+1-2=|E|-1,
\]
which concludes the proof.
\end{proof}

We can also provide upper bounds for normal cut polytopes,
which often are equalities.

\begin{theorem}
\label{regularity-main}
Let $G=(V,E)$ be a graph such that $\cut^{\Box}(G)$ is normal. Then
\[
\reg \Kb[G]
\begin{cases}
=|E|-1&\text{if any induced cycles of $G$ is of even length (i.e., it is bipartite)},\\
\leq |E|-2&\text{if $G$ has no triangles and has an induced cycle of odd length},\\
=|E|-3&\text{if $G$ contains a triangle}.
\end{cases}
\]
\end{theorem}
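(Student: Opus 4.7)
The plan is to apply Equation (\ref{Eq-reg}), giving
\[
\reg \Kb[G] \;=\; \dim \Kb[G] - \min\{i \in \Zb : (\omega)_i \neq 0\} \;=\; (|E|+1) - \alpha_{\min}(G),
\]
where $\omega$ denotes the canonical module of the Cohen--Macaulay algebra $\Kb[G]$ and, by the Danilov--Stanley description, $\alpha_{\min}(G)$ is the smallest last coordinate occurring in $\inte(M_G)$. Since normality forces $G$ to be $K_5$-minor-free, the characterization of $\inte(\cone(M_G))$ as the set of points satisfying the strict forms of the inequalities (\ref{InEq-facets1}) and (\ref{InEq-facets2}), recalled after Proposition \ref{Prop-facets-K5-minor-free}, is available. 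The theorem thus reduces to showing $\alpha_{\min}(G) \geq 2,\ 3,\ 4$ in Cases 1, 2, 3, respectively, with the equalities in Cases 1 and 3 then following from the already-established lower bounds in Lemma \ref{regularity-bipartite} and Proposition \ref{lower-bound}.

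Cases 1 and 2 share the same opening step. In both, $G$ contains no triangle, so every edge $e$ produces an inequality $0 \leq x_e \leq \alpha$ from (\ref{InEq-facets1}); its strict form at $\alpha = 1$ demands $0 < x_e < 1$ with $x_e \in \Zb$, a contradiction, yielding $\alpha_{\min}(G) \geq 2$. In Case 2 I would push further to $\alpha_{\min}(G) \geq 3$: the strict form of the same inequality at $\alpha = 2$ forces $x_e = 1$ for every edge, so if $C$ is an induced odd cycle of $G$, then $\sum_{e \in E(C)} x_e = |E(C)|$ is odd. This contradicts the parity criterion (\ref{elt-in-group}) for membership in $\gp(M_G)$, so no degree-2 element of $M_G$ lies in its interior.

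Case 3 is the main content. Fix a triangle $T$ with edges $\{a, b, c\}$ in $G$; its four cut vectors form the set $S = \{(0,0,0),\ (1,1,0),\ (1,0,1),\ (0,1,1)\}$. For any $(\xbf, \alpha) \in M_G$, expressing $\xbf$ as a sum of $\alpha$ cut vectors of $G$ and restricting to $T$ gives $\xbf|_T = v_1 + \cdots + v_\alpha$ with each $v_i \in S$. The clean observation I would exploit is a bijection between the four strict triangle inequalities and the four elements of $S$: a direct evaluation of $x_a+x_b+x_c$ and of the differences $x_a - x_b - x_c$, $x_b - x_a - x_c$, $x_c - x_a - x_b$ on each element of $S$ shows that
\[
x_a + x_b + x_c < 2\alpha,\quad x_a < x_b + x_c,\quad x_b < x_a + x_c,\quad x_c < x_a + x_b
\]
hold, respectively, if and only if $(0,0,0),\ (0,1,1),\ (1,0,1),\ (1,1,0)$ each appear at least once among the $v_i$. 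Hence all four can hold simultaneously only when every element of $S$ occurs, forcing $\alpha \geq 4$; this gives $\alpha_{\min}(G) \geq 4$ and, combined with Proposition \ref{lower-bound}, $\reg \Kb[G] = |E|-3$. The main obstacle of the proof is noticing this four-to-four correspondence, which replaces an otherwise lengthy case-by-case check at $\alpha = 1, 2, 3$ with a one-line verification.
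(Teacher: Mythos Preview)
Your proof is correct, and Cases 1 and 2 are essentially word-for-word the paper's argument. In Case 3 the paper also restricts to a triangle $C$, but then simply cites Theorem~\ref{Theorem-canonical-module}(ii) (applied to $C=K_3$, which is bridgeless chordal) to conclude $\alpha\geq 4$; that theorem in turn was proved in Lemma~\ref{canonical_ module_bridgeless_chordal} by a small case analysis at $k=2,3$. Your four-to-four correspondence between the strict triangle inequalities and the multiplicities $n_0,n_1,n_2,n_3$ of the four cut vectors of $K_3$ is a cleaner, self-contained replacement for that case check: writing $x_a=n_1+n_2$, $x_b=n_1+n_3$, $x_c=n_2+n_3$ and $\alpha=n_0+n_1+n_2+n_3$, each strict inequality reduces exactly to $n_i>0$ for the corresponding $i$. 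So the two proofs have the same architecture; yours just unpacks and streamlines the triangle step rather than invoking the earlier canonical-module computation.
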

\begin{proof}
Let $\vbf=(\xbf,\ \alpha)\in \inte(M_G)$
with $\alpha\in \Nb$ chosen in a way that (using (\ref{Eq-reg}))
\[
\reg \Kb(G)
=
|E|+1-\alpha.
\]

\emph{Case 1}: Assume at first that any induced cycles of $G$ (if existing)
is of even length and in particular, $G$ contains no triangles.
It follows from (\ref{InEq-facets1}) that in this case $\alpha \geq 2$, since
the facet defining inequalities $0\leq x_e\leq 1$ have no chance to be strictly satisfied by integers.
This implies already $\reg \Kb[G]\leq |E|-1$. The equality follows from this and
Lemma \ref{regularity-bipartite}.

\emph{Case 2}: Next we consider the case that $G$ contains
no triangles, but an induced cycle $C$ of odd length.
As in Case 1 we see immediately $\alpha \geq 2$.
In particular, for $e\in E(G)$ and the coordinate $x_e$ of $\vbf$
we get by (\ref{InEq-facets1}) that
\[
1\leq x_e \leq \alpha-1.
\]
Assume that $\alpha=2$. Then $x_e=1$ for $e\in E(G)$.
But for $e\in E(C)$ and the coordinates $x_e$ of $\vbf$, the congruences in
(\ref{elt-in-group}) imply the contradiction
$
|E(C)|=\sum_{e\in C} x_e
\equiv 0
\modulo 2.
$
Hence, $\alpha \geq 3$ and
$
\reg \Kb[G]\leq |E|-2.
$

\emph{Case 3}: Finally, we consider the case that $G$ contains (at least) one triangle $C$
with edges $\{e, f, g\}$.
Then $\vbf=(\xbf,\ \alpha)\in \inte(M_G)$. Using
(\ref{elt-in-group}), (\ref{InEq-facets1}) and (\ref{InEq-facets2})
applied to the cycle $C$ yield that $(x_e,\ x_f,\ x_g,\ \alpha)\in \inte(M_C)$.
Then by Theorem \ref{Theorem-canonical-module}(ii) we get
that $\alpha \geq 4$. Thus, $\reg \Kb[G]\leq |E|-3$.
This together with the lower bound of Proposition \ref{lower-bound}
yields the equality in this case.
\end{proof}

\begin{corollary}
\label{regularity-even}
Let $G=(V,E)$ be a cycle of even length. Then
\[
\reg \Kb[G]=|E|-1.
\]
\end{corollary}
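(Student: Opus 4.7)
The plan is to obtain the corollary as a direct specialization of Theorem \ref{regularity-main}, Case 1. First I would verify the two hypotheses needed there: (a) that $\cut^{\Box}(G)$ is normal, and (b) that every induced cycle of $G$ has even length. For $G=C_n$ with $n$ even, the only induced cycle is $G$ itself (since $G$ is chordless), of even length $n$, so (b) holds and $G$ is bipartite. For (a), I would observe that any minor of a cycle is a disjoint union of cycles, paths, and isolated vertices, so $G$ is in particular $K_5\setminus e$-minor-free; normality then follows from Proposition \ref{known-cases-conjecture}(i), or equivalently from the exercise cited in the proof of Corollary \ref{ring-graph}.

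With these hypotheses in hand, the equality $\reg \Kb[G] = |E| - 1$ is immediate from Theorem \ref{regularity-main} Case 1 and requires no further work. I do not anticipate any genuine obstacle: the corollary simply records this case for the prototypical family of bipartite graphs whose cut polytope is known to be normal.

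If a self-contained argument were preferred over quoting Theorem \ref{regularity-main}, the plan would be to split into the two bounds. For the lower bound $\reg \Kb[G] \geq |E| - 1$, I would cite Lemma \ref{regularity-bipartite} (whose hypotheses—bipartiteness and normality—are exactly what I just verified). For the matching upper bound, I would reproduce the Case 1 reasoning: given $(\xbf, \alpha) \in \inte(M_G)$, since no edge of $G$ lies in a triangle, the inequalities (\ref{InEq-facets1}) must be strictly satisfied by integers, forcing $0 < x_e < \alpha$ and hence $\alpha \geq 2$; formula (\ref{Eq-reg}) then gives $\reg \Kb[G] \leq \dim \Kb[G] - 2 = |E| - 1$.
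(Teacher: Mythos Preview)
Your proposal is correct and matches the paper's own proof, which simply invokes Corollary~\ref{ring-graph} for normality and then the first case of Theorem~\ref{regularity-main}. Your route to normality via Proposition~\ref{known-cases-conjecture}(i) is an equally valid (and essentially equivalent) citation, and the optional self-contained argument you sketch is exactly the content of Case~1 of Theorem~\ref{regularity-main} together with Lemma~\ref{regularity-bipartite}.
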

\begin{proof}
The proof follows from Corollary \ref{ring-graph} and from the first case of Theorem \ref{regularity-main}.
%Note that  $\cut^{\Box}(G)$ is normal by Corollary \ref{ring-graph}.
%By assumption $G$ is bipartite, which is equivalent to the fact that all induced cycles are of even length (see, e.g., \cite[Page 32, Ex. 30]{Diestel}.
%Hence, the claim follows from the first case of Theorem \ref{regularity-main}.
\end{proof}

Having Theorem \ref{regularity-main} in mind, it is an interesting
question to find a class of graphs where where always equality occurs instead of inequalities in the second case.
We will see that ring graphs have this property.  The study of the regularity of cut algebras for ring graphs was initiated in \cite[Proposition 4.4 and Corollary 6.5]{NP}. Using a different approach
we give in Theorem~\ref{regularity-ring-graph} a complete description of the regularity in that case. To prepare the proof of this result,  we discuss at first a lemma.

\begin{lemma}
\label{regularity-odd}
Let $G=(V,E)$ be a cycle of odd length $2n+1>3$ for $n\in \Nb$. Then
\[
\reg \Kb[G]=|E|-2.
\]
\end{lemma}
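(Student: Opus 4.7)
The plan is to combine the upper bound already available from Theorem \ref{regularity-main} with a matching lower bound obtained by exhibiting a specific element in $\inte(M_G)$ with last coordinate $3$, which via (\ref{Eq-reg}) forces $\reg \Kb[G] \geq |E|+1 - 3 = |E|-2$.

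For the upper bound, observe that a cycle of odd length $2n+1 \geq 5$ is a ring graph containing no triangle but having an induced cycle of odd length (namely itself). By Corollary \ref{ring-graph}, $\cut^{\Box}(G)$ is normal, so Case 2 of Theorem \ref{regularity-main} immediately yields $\reg \Kb[G] \leq |E|-2$.

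For the lower bound, let $E=E(G)$ and consider the vector $\vbf = (\xbf,3) \in \Zb^{|E|+1}$, where $\xbf$ has exactly one coordinate equal to $2$ and all remaining coordinates equal to $1$. I would first verify $\vbf \in \gp(M_G)$ using (\ref{elt-in-group}): the only cycle to check is $G$ itself, and $\sum_{e} x_e = 2 + 2n = 2n+2$ is even. Next, using Proposition \ref{Prop-facets-K5-minor-free} and the inequalities (\ref{InEq-facets1}), (\ref{InEq-facets2}), I would check that $\vbf$ is in the relative interior of $\cone(M_G)$. Since no edge belongs to a triangle, (\ref{InEq-facets1}) becomes $0 < x_e < 3$, which is satisfied. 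For (\ref{InEq-facets2}) applied to the only induced cycle $C = G$ and an odd subset $F \subseteq E(C)$ with $|F|=k$, the largest value of the left-hand side occurs when $F$ contains the distinguished edge, giving at most $2 + (k-1) - (2n+1-k) = 2k - 2n$, which must be strictly less than $3(k-1)$; this reduces to $k > 3 - 2n$, which holds for all $k \geq 1$ since $n \geq 2$. Hence all facet inequalities are strictly satisfied.

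By normality of $M_G$ (Corollary \ref{ring-graph}), one has $\Qb_{\geq 0} M_G \cap \gp(M_G) = M_G$, so $\vbf \in M_G$, and being strictly interior to the cone places $\vbf \in \inte(M_G)$. Thus the graded piece $\omega_3$ of the canonical module of $\Kb[G]$ is nonzero, and (\ref{Eq-reg}) gives $\reg \Kb[G] \geq \dim \Kb[G] - 3 = |E| - 2$. Combined with the upper bound, this yields the equality.

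The only step with any real content is producing the interior vector; the obstacle there is verifying the cycle inequality for every odd subset $F$, but this reduces to an elementary arithmetic comparison once one notes that flipping a single coordinate from $1$ to $2$ is the smallest perturbation that restores the parity condition from (\ref{elt-in-group}). The hypothesis $2n+1 > 3$ is essential: for a triangle ($n=1$) the bound $k > 3-2n = 1$ would fail at $k=1$, reflecting the fact that $\reg \Kb[C_3] = |E|-3 = 0$ by Case 3 of Theorem \ref{regularity-main}.
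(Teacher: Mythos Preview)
Your proposal is correct and follows essentially the same route as the paper: both arguments obtain the upper bound from Case~2 of Theorem~\ref{regularity-main}, and for the lower bound both exhibit the same interior element $(2,1,1,\ldots,1,3)$ and verify the strict cycle inequalities in the same way. The only minor difference is that the paper shows $\vbf\in M_G$ by writing it explicitly as a sum of three cut vectors $(\abf,1)+(\bbf,1)+(\zerobf,1)$, whereas you deduce $\vbf\in M_G$ from $\vbf\in\gp(M_G)\cap\Qb_{\geq 0}M_G$ together with normality; both are valid, and your route is equally clean since normality is already in hand.
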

\begin{proof}
Note that  $\cut^{\Box}(G)$ is normal by Corollary \ref{ring-graph}.
Let $\omega$ be its canonical module. By the second case of Theorem \ref{regularity-main} we have $\reg \Kb[G]\leq |E|-2$.

Let $V=\{v_1, \cdots,v_{2n+1}\}$  and $E=\{e_1,\cdots,e_{2n+1}\}$,
where $e_i=v_iv_{i+1}$ for $1\leq i< {2n+1}$ and
$e_{2n+1}=v_{2n+1}v_1$.

Consider the cut vectors
$\abf$ corresponding to $\{v_1,v_3, \ldots, v_{2n+1}\} \subseteq V$
and
$\bbf$ corresponding to $\{v_1\} \subseteq V$.
Thus, $\abf_{e_i}=1$ for $i\neq 2n+1$ and $\abf_{e_{2n+1}}=0$
as well as
$\bbf_{e_i}=1$ for $i=1,2n+1$ and
$\bbf_{e_i}=0$ for $1<i<2n+1$.
It follows that
\[
\vbf=
(2,\ 1,\ 1,\ \ldots,\ 1,3)
=
(\abf,1)
+
(\bbf,1)
+
(\zerobf,1)
\in M_G.
\]
Let $F\subseteq E$ be an odd subset.
If $e_1\in F$, then
\[
|F|+1< 3(|F|-1)+(2n+1-|F|),
\]
since $2n+1+|F|-4>0$.
If $e_1\not\in F$, then similarly
\[
|F|<3(|F|-1)+(2n+1-|F|+1).
\]
Using (\ref{InEq-facets1}) and (\ref{InEq-facets2})
one gets that $\vbf \in \inte(M_G)$ and then $\omega_3\neq 0$.
Hence, using Equation (\ref{Eq-reg}) this yields
\[
\reg \Kb[G] \geq \dim\Kb[G]-3 =|E|+1-3=|E|-2.
\]
Thus, $\reg \Kb[G]=|E|-2$.
\end{proof}

For the proof of the next theorem we observe the following.
A ring graph $G$ may have isolated vertices, which are not relevant
for us since they have no affect on cut polytopes. So
we always may assume without loss of generality that $G$ is connected for such a graph. $G$ can be a cycle or a tree. If this is not the case, then
there exists always a decomposition
of one of the following types
(see, e.g., \cite[Example 5.10(4)]{Romer-Madani}):
\begin{enumerate}
\item
$G=\Tilde{G}\times_k T$ for $k\in \{0,1\}$
where $\Tilde{G}$ is a ring graph and $T$ is a tree.
One even can reduce the case $k=1$ to (at most)
two decompositions of type $\Tilde{G}\times_0 T$.
\item
$G=\Tilde{G}\times_k C$ for $k\in \{0,1\}$
where $\Tilde{G}$ is a ring graph
and $C$ is a cycle.
\end{enumerate}
Note also that induced cycles of $G$ are the ones of $\Tilde{G}$
in both cases, or in (ii) additionally the cycle $C$.
With this preparation, we are ready to prove:
\begin{theorem}
\label{regularity-ring-graph}
Let $G=(V,E)$ be a ring graph with $|E|\geq 1$.
Then:
\[
\reg \Kb[G]
=
\begin{cases}
|E|-1&\text{if any induced cycles of $G$ is of even length},\\
|E|-2&\text{if $G$ has no triangles and has an induced cycle of odd length},\\
|E|-3&\text{if $G$ contains a triangle}.
\end{cases}
\]
\end{theorem}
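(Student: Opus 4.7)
The plan is to combine Corollary \ref{ring-graph} with Theorem \ref{regularity-main}. Since every ring graph has a normal cut polytope, Theorem \ref{regularity-main} immediately yields equality in the first case (every induced cycle is even, which holds vacuously when $G$ is a tree) and in the third case ($G$ contains a triangle), together with the upper bound $\reg \Kb[G] \leq |E|-2$ in the second case. So the only real work is establishing the matching lower bound $\reg \Kb[G] \geq |E|-2$ when $G$ has no triangles but does admit an induced cycle of odd length.

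For this lower bound, the strategy is to exhibit a lattice point in degree $3$ of the canonical module $\omega$ of $\Kb[G]$; then \eqref{Eq-reg} gives $\reg \Kb[G] \geq \dim \Kb[G] - 3 = |E|-2$ as required. By the Danilov--Stanley description of $\omega$, this reduces to producing an element of $\inte(M_G)$ whose last coordinate equals $3$. My candidate is
\[
\vbf = (2,\ 2,\ \ldots,\ 2,\ 3) \in \Zb^{|E|+1}.
\]
I would verify $\vbf \in \inte(M_G)$ using the two conditions supplied by normality. First, $\vbf \in \gp(M_G)$ follows from \eqref{elt-in-group}, since $\sum_{e \in C} 2 = 2|E(C)|$ is even for every cycle $C$ of $G$. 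Second, because $G$ is triangle-free, every edge fails to belong to a triangle and every induced cycle has length $n \geq 4$. Hence \eqref{InEq-facets1} reduces to the strictly valid $0 < 2 < 3$, while for any induced cycle $C$ of length $n \geq 4$ and any $F \subseteq E(C)$ with $|F|$ odd, \eqref{InEq-facets2} becomes $4|F| - 2n < 3(|F|-1)$, equivalently $|F| < 2n - 3$; this is satisfied since $|F| \leq n \leq 2n - 4 < 2n - 3$. Thus $\vbf$ lies in the interior of $\cone(M_G)$, and normality forces $\vbf \in \inte(M_G)$.

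I do not anticipate serious obstacles: the essential ingredients (normality of ring graphs via Corollary \ref{ring-graph}, together with the explicit facet description \eqref{InEq-facets1}--\eqref{InEq-facets2}) are all in hand by this point in the paper. The main conceptual choice is the candidate $\vbf$: doubling every edge coordinate uniformly trivializes the parity constraint of \eqref{elt-in-group}, while the absence of triangles is exactly what supplies the strict slack needed in the induced-cycle inequalities at the scale $\alpha = 3$. This argument subsumes the special case of an odd cycle treated in Lemma \ref{regularity-odd} and avoids any need to argue inductively along the ring-graph decomposition.
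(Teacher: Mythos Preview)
Your proof is correct and takes a genuinely different, cleaner route from the paper in Case~2. The paper handles Cases~1 and~3 exactly as you do, but for Case~2 it constructs an interior element $(\xbf,3)$ with coordinates $x_e\in\{1,2\}$ by induction along the ring-graph decomposition $G=G_1\#_k G_2$ ($k\in\{0,1\}$), carefully matching the value at the glued edge in the $1$-sum case and splitting into several subcases according to which summand carries an odd induced cycle. Your uniform choice $\vbf=(2,\ldots,2,3)$ bypasses all of this: doubling every coordinate kills the parity obstruction in \eqref{elt-in-group} outright, and triangle-freeness alone (forcing $n\geq 4$, hence $|F|\leq n\leq 2n-4<2n-3$) supplies the strict slack in \eqref{InEq-facets2}.

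What your approach buys is generality. Nothing in your argument uses the ring-graph structure beyond normality and $K_5$-minor-freeness (the latter already implied by the former), so you have in fact shown that the inequality in Case~2 of Theorem~\ref{regularity-main} is always an equality. This is strictly stronger than Theorem~\ref{regularity-ring-graph} and also absorbs Lemma~\ref{regularity-example} (the graph $C_5\#_{P_2}C_4$ is triangle-free and normal but not a ring graph). The paper's inductive proof, by contrast, is tied to the $0$- and $1$-sum decomposition and does not extend past ring graphs without further work.
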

\begin{proof}
The polytope  $\cut^{\Box}(G)$ is normal by Corollary \ref{ring-graph}.
Let $\omega$ be the associated canonical module
generated as an ideal by $\inte(M_G)$ which trivially satisfies $\omega_0=0$.
Without loss of generality we may assume that $G$ is connected.

\emph{Cases 1, 3}:
These cases follow directly from Theorem \ref{regularity-main}.

\emph{Case 2}:
Next we consider the case of a ring graph $G$, which does not
contain triangles, but has induced cycles of odd length.
It follows from the second case of Theorem \ref{regularity-main} that  $\reg \Kb[G]\leq |E|-2$.  We claim that there exists an $\xbf\in \Zb^{E(G)}$ such that
\[
(\xbf,\ 3)\in \inte(M_G).
\]
Note that by (\ref{InEq-facets1}) necessarily $1\leq x_e \leq 2$ for any $e\in E(G)$. Then (\ref{Eq-reg}) yields that $\reg \Kb[G]\geq |E|-2$ and it remains to prove the claim for Case 2.

If $G$ is equal to just such an odd cycle, then Lemma \ref{regularity-odd}
and its proof yield the assertion of the theorem as well as the claim.

We show the claim by an induction on the number of $0$- and $1$-sums
at most needed to construct $G$ as a ring graph using trees and cycles as discussed above, where we already verified the base case and consider for trees only $0$-sums.

Assume that $G=G_1\#_1G_2$ is a $1$-sum of two ring graphs $G_1$, $G_2$
which both do not contain triangles and $G_2=C$ is a cycle.
Let the edge $e=vw$, at which the 1-sum is performed, be the first edge of $E(G)$,
then we assume that the remaining edges
from $E(G_1)$ follow and finally we see the edges
from $E(G_2)$ with respect to a suitable ordering.

If $G_1$ contains (if existing) only cycles of even length,
then $G_2$ has to be an odd cycle.
In $\inte(M_{G_1})$ there exists
by the proof of Lemma \ref{regularity-bipartite}
an element $(1,\ 1, \ldots,\ 1,\ 2)$ and then also
\[
(1,\ \vbf_1,\ 3)
=
(1,\ 1, \ldots,\ 1,\ 3)
=
(1,\ 1, \ldots,\ 1,\ 2)
+
(0,\ 0, \ldots,\ 0,\ 1)\in \inte(M_{G_1}),
\]
since $(0,\ 0, \ldots,\ 0,\ 1)\in M_{G_1}$
as the vector induced by the cut vector of the empty set.
It follows from Lemma \ref{regularity-odd} that $\inte(M_{G_2})$  contains
$
(1,\ \vbf_2,\ 3)
=
(1,\ 2,\ 1,\dots,\ 1,\ 3),
$
since for such vectors the $2$ might be at any position (except the last one).
One gets that
\[
(1,\ \vbf_1,\ \vbf_2,\  3)
\in \inte(M_G),
\]
which follows from the fact
that induced cycles of $G$ are those from $G_1,G_2$,
the monoid $M_G$ is normal (thus $\Qb_{\geq 0} M_G\cap \gp(M_G)=M_G$),
and then applying all this using
(\ref{elt-in-group}), (\ref{InEq-facets1}) and (\ref{InEq-facets2}).

Next we assume that $G_1$ has induced cycles of odd length
and $G_2$ might be an arbitrary cycle not equal to a triangle.
If by induction hypothesis
\[
(1,\ \vbf_1,\ 3)\in \inte(M_{G_1}),
\]
then one uses facts in the same manner as before
by observing that
$(1,\ 1, \ldots,\ 1,\ 3)\in \inte(M_{G_2})$
if $G_2$ is an even cycle, or
$(1,\ 2,\ 1, \ldots,\ 1,\ 3)\in \inte(M_{G_2})$
if $G_2$ is an odd cycle, to conclude
that
$
(1,\ldots ,\  3)
\in \inte(M_G).
$

It remains to consider the subcase
\[
(2,\ \vbf_1,\ 3)\in \inte(M_{G_1}).
\]
If $G_2$ is an odd cycle, then
Lemma \ref{regularity-odd} implies that $\inte(M_{G_2})$  contains
\[
(2,\ \vbf_2,\ 3)
=
(2,\ 1,\dots,\ 1,\ 3)\in \inte(M_{G_2}).
\]
Since all monoids are normal,
that induced cycles of $G$ are those from $G_1,G_2$,
and then applying again
(\ref{elt-in-group}), (\ref{InEq-facets1}) and (\ref{InEq-facets2})
yields
\[
(2,\ \vbf_1,\ \vbf_2,\ 3)\in \inte(M_G).
\]
Finally, assume that $G_2$ is an even cycle and thus
\[
(1,\ 1,\ 1,\dots,\ 1,\ 2)\in \inte(M_{G_2})
\]
by the proof of Lemma \ref{regularity-bipartite}.
Since $G_2$ is bipartite, one can see
analogously to the proof of Lemma \ref{canonical_module_bipartite}
that
\[
(1,\ 1,\ 1,\dots,\ 1,\ 1)\in M_{G_2}
\]
and thus
\[
(2,\ \vbf_2,\ 3)
=
(1,\ 1,\ 1,\dots,\ 1,\ 2)
+
(1,\ 1,\ 1,\dots,\ 1,\ 1)
\in  \inte(M_{G_2}).
\]
In this case
we get by our ``standard'' argument that
\[
(2,\ \vbf_1,\ \vbf_2,\ 3)\in \inte(M_G).
\]
Similarly, one proves the case that
$G=G_1\#_0G_2$ is a $0$-sum using graphs $G_1,G_2$ as above.
Here the case distinction is much simpler, since one does not have
to make sure that one particular chosen coordinate is
equal for vectors produced in $\inte(M_{G_1})$ and $\inte(M_{G_2})$.
\end{proof}

Before we discuss our next main result,
we need to determine the regularity for one more special case. In the following
$P_n$ denotes always a path of (edge-)length $n$, i.e.\ it has exactly $n$ edges.

\begin{lemma}
\label{regularity-example}
We have $\reg \Kb[C_5\#_{P_2}C_4]=5$.

\end{lemma}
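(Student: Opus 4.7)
The approach is two-fold: derive the upper bound from Theorem~\ref{regularity-main} applied to $G=C_5\#_{P_2}C_4$, and match it with an explicit interior lattice point of $M_G$ at level~$3$. Concretely, gluing a $5$-cycle and a $4$-cycle along a path $P_2$ (three common vertices, two common edges) yields a graph on $|V|=6$ vertices with $|E|=5+4-2=7$ edges, whose induced cycles consist of one $4$-cycle and two $5$-cycles (and no triangles). Since every minor of $G$ has at most $7$ edges while $K_5\setminus e$ has $9$, the graph $G$ is automatically $K_5\setminus e$-minor-free, so Proposition~\ref{known-cases-conjecture}(i) guarantees that $\cut^{\Box}(G)$ is normal. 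Because $G$ has no triangles but contains an induced odd cycle, Case~2 of Theorem~\ref{regularity-main} then gives
\[
\reg\Kb[G]\leq |E|-2=5.
\]

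For the matching lower bound it suffices, via the Danilov--Stanley description and (\ref{Eq-reg}), to exhibit some $(\xbf,3)\in\inte(M_G)$, which forces $\omega_3\neq 0$ and hence $\reg\Kb[G]\geq |E|+1-3=5$. The natural candidate is $\xbf$ with $x_{e_0}=2$ on a single edge $e_0\in E(C_5)\setminus E(P_2)$ (which lies on both induced $5$-cycles) and $x_e=1$ on every other edge. With $\alpha=3$ the strict version of (\ref{InEq-facets1}) is automatic since $1\leq x_e\leq 2<3$; the strict version of (\ref{InEq-facets2}) for an induced cycle $C$ and odd $F\subseteq E(C)$ is bounded from above by $3|F|-|E(C)|<3|F|-3$, which holds whenever $|E(C)|\geq 4$ and so covers every induced cycle of $G$. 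The congruence (\ref{elt-in-group}) only needs to be checked on a basis of the $2$-dimensional cycle space of $G$, for instance the induced $4$-cycle (no $2$-weighted edges, sum~$4$) and the induced $5$-cycle through $e_0$ (one $2$-weighted edge, sum~$6$); both sums are even, and the third induced cycle is the $\Zb/2$-sum of these two. Normality of $M_G$ then places $(\xbf,3)\in\gp(M_G)\cap\inte(\cone(M_G))=\inte(M_G)$.

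Combining both inequalities yields $\reg\Kb[G]=5$. The only mild obstacle is choosing $e_0$ so that the parity condition (\ref{elt-in-group}) holds on every induced cycle simultaneously, but because the cycle space has dimension $|E|-|V|+1=2$ and the $4$-cycle already has even weight with all coordinates equal to~$1$, any single flip on an edge common to both $5$-cycles suffices, and $E(C_5)\setminus E(P_2)$ provides three such edges.
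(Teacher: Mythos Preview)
Your proof is correct and follows essentially the same strategy as the paper: the upper bound via Theorem~\ref{regularity-main}, and the lower bound by exhibiting an interior lattice point of $M_G$ at height~$3$. The only notable difference is that the paper writes the candidate $(1,1,1,2,1,1,1,3)$ directly as a sum of three lifted cut vectors (so membership in $M_G$ is immediate), whereas you verify the group condition \eqref{elt-in-group} and the strict facet inequalities and then invoke normality; your account of the induced cycle structure (one $C_4$ and two $C_5$'s) is in fact more precise than the paper's somewhat elliptical phrase ``$C_4,C_5$ are the only induced cycles''.
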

\begin{proof}
Let $G=C_5\#_{P_2}C_4$ and $E=E(G)$ as well as $V=V(G)$. Since $G$ is $K_5\setminus e$-minor-free, it follows from Proposition~\ref{known-cases-conjecture} (i)
%\cite [Thereom 3.8]{Ohsugi}
that $\cut^{\Box}(G)$ is normal.
Let $\omega$ be its canonical module. By the second case of Theorem \ref{regularity-main} we know already that
$
\reg \Kb[G]\leq
7-2=5.
$
Choose an ordering of the vertices and edges.
Let $V=\{v_1, \ldots,v_{6}\}$  and $E=\{e_1,\ldots,e_{7}\}$,
where  $e_i=v_iv_{i+1}$ for $1\leq i< {5}$,
$e_{5}=v_{5}v_1$, $e_{6}=v_1v_6$ and $e_{7}=v_3v_6$.

That is, the cycle $C_5$ with vertices $\{v_1, \cdots,v_{5}\}$ is  attached via the $P_2$-sum performed
at the edges $\{e_1,e_2\}$
with $C_4$ with vertices $\{v_1,v_2,v_3,v_6\}$.

Consider the cut vectors
$\abf$ corresponding to $\{v_1,v_3, v_{5}\} \subseteq V$
and
$\bbf$ corresponding to $\{v_5\} \subseteq V$.
Thus, $\abf_{e_i}=1$ for $i\neq 5$ and $\abf_{e_{5}}=0$
as well as
$\bbf_{e_i}=1$ for $i=4,5$ and
$\bbf_{e_i}=0$ for any other $i$.
It follows that
\[
\vbf=
(1,\ 1,\ 1,\ 2,\ 1,\ 1,\ 1,\ 3)
=
(\abf,1)
+
(\bbf,1)
+
(\zerobf,1)
\in M_G.
\]
Using the proofs of
Lemma \ref{regularity-bipartite}
and
Lemma \ref{regularity-odd}
as well as
that $C_4,C_5$ are the only induced cycles of $G$
one gets that $\vbf \in \inte(M_G)$ and then $\omega_3\neq 0$.
Hence, Equation (\ref{Eq-reg}) yields
\[
\reg \Kb[G] \geq \dim\Kb[G]-3 =7+1-3=5,
\]
and thus $\reg \Kb[G]=5$.
\end{proof}
The strategy of the previous proof
can also be used to show that $\reg \Kb[C_{2n+1}\#_{P_2}C_{2k}]=2n+2k-3$ for $n\geq 1$ and $k\geq 2$.
Note that cut polytopes with small regularity are normal.
\begin{lemma}
\label{small-regularity-normal}
 Let $G$ be a graph such that $\reg \Kb[G]\leq 4$. Then $\cut^{\Box}(G)$  is normal.
\end{lemma}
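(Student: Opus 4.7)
The plan is to reduce to Proposition~\ref{known-cases-conjecture}(i) by showing that any $G$ with $\reg \Kb[G] \leq 4$ must be $K_5\setminus e$-minor-free. The key technical tool will be monotonicity of Castelnuovo--Mumford regularity under face retracts of polytopal algebras.

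First I would record that if $H$ is a minor of $G$ obtained by edge contractions alone, then $\cut^{\Box}(H)$ is a face of $\cut^{\Box}(G)$ by \cite[Lemma 3.2(2)]{Sturmfels-Sullivant}, and hence $\Kb[H]$ is a graded algebra (face) retract of $\Kb[G]$ by \cite[Corollary 4.34]{Bruns-Gubeladze}. For such combinatorial pure subring inclusions of toric algebras, graded Betti numbers over the corresponding polynomial rings are monotone (a known result of Ohsugi--Herzog--Hibi), which yields $\reg \Kb[H] \leq \reg \Kb[G]$.

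Applying this first to $H = K_5$: since $\mu(I_{K_5}) = 6$ we have $\reg \Kb[K_5] \geq \mu(I_{K_5}) - 1 = 5$, and since $K_5$ is a complete minor it can always be obtained from $G$ by contractions alone (as recalled in Section~\ref{Section-2}). Hence $\reg \Kb[G] \leq 4$ forces $G$ to be $K_5$-minor-free.

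Next I would rule out a $K_5\setminus e$ minor by contradiction. Performing all contractions first, one reduces $G$ to a $5$-vertex graph $G''$ with at least $9$ edges (since subsequent deletions would still need to yield the $9$ edges of $K_5\setminus e$). The only simple $5$-vertex graphs with $\geq 9$ edges are $K_5\setminus e$ and $K_5$; as $G$ is $K_5$-minor-free we must have $G'' \cong K_5\setminus e$, so the minor is obtained from $G$ by contractions alone. Now $K_5\setminus e$ is chordal, bridgeless, $K_5$-minor-free, and contains a triangle, so Theorem~\ref{Ohsugi-Gorenstein}(ii) yields that its cut polytope is normal Gorenstein, and Theorem~\ref{regularity-main} gives $\reg \Kb[K_5\setminus e] = |E(K_5\setminus e)| - 3 = 6$. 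Monotonicity then forces $\reg \Kb[G] \geq 6$, contradicting $\reg \Kb[G] \leq 4$. Therefore $G$ is $K_5\setminus e$-minor-free, and Proposition~\ref{known-cases-conjecture}(i) concludes that $\cut^{\Box}(G)$ is normal.

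The main hurdle is the regularity-monotonicity lemma for face retracts: while the excerpt invokes normality and seminormality inheritance for retracts via \cite[Proposition 3.1]{Epstein-Nguyen}, the analogous Betti-number (hence regularity) monotonicity for combinatorial pure toric subrings needs to be stated and cited separately. The edge-counting argument identifying $K_5\setminus e$-minors of $K_5$-minor-free graphs with contraction-only minors is then the easy part.
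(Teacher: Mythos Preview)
Your proposal is correct and follows essentially the same route as the paper's proof: rule out a $K_5$-minor via $\reg \Kb[K_5]\geq 5$, then show any $K_5\setminus e$-minor of a $K_5$-minor-free graph is in fact obtainable by contractions alone (the paper phrases this via commuting deletions/contractions, you via the edge count $\geq 9$ on $5$ vertices), use $\reg \Kb[K_5\setminus e]=6$ from Theorem~\ref{regularity-main}, and conclude by Proposition~\ref{known-cases-conjecture}(i). The regularity-monotonicity step you flag as the ``main hurdle'' is exactly what the paper handles by citing \cite[Corollary 4.6 and Proposition 2.2]{Romer-Madani}, which packages the face-retract and Betti-number inequality you assemble from \cite{Sturmfels-Sullivant}, \cite{Bruns-Gubeladze}, and the Ohsugi--Herzog--Hibi result.
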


\begin{proof}
For this proof we use the fact that $I_{K_5}$ has a minimal generator in degree 6
(see, e.g., \cite[Table 1]{Sturmfels-Sullivant}),
which implies that $\reg I_{K_5}\geq 6$ and thus $\reg \Kb[K_5]\geq 5$.
Assume that $G$ has $K_5$ as a minor. Recall that this
minor can be obtained by edge contractions only
(plus possible deletions of isolated vertices, etc.)
and that $\Kb[K_5]$ is an algebra retract of $\Kb[G]$
(see \cite[Corollary 4.6]{Romer-Madani}).
Then, e.g., \cite[Proposition 2.2]{Romer-Madani} implies that
$$
\reg \Kb[G]\geq \reg \Kb[K_5]\geq 5.
$$
This is a contradiction. Hence, $G$ is $K_5$-minor-free.

By Theorem \ref{regularity-main} or Theorem \ref{Theorem-canonical-module}(ii)
we know that $\reg \Kb[K_5\setminus e]=6$.

Assume that $H=K_5\setminus e$ is a minor of $G$.
Then we can get it from $G$ by certain edge deletions and edge contractions.
Observe that the deletions do not decrease the number of vertices, but
contractions have this property  and also that
operations deletions/contractions commute with each other.

Just by using only the edge contractions with respect to $G$ (plus possible deletions of isolated vertices, etc.)
yields a graph $\Tilde{H}$ on five vertices which contains $H$ as a subgraph.
The only possibilities of such $\Tilde{H}$ are $H$ and $K_5$.
Since $G$ is $K_5$-minor-free, we obtain that $\Tilde{H}=K_5\setminus e$.
Thus, the minor $K_5\setminus e$ can be obtained from $G$ using only
edge contractions. But then \cite[Corollary 4.6]{Romer-Madani} implies again
the contradiction
$$
\reg \Kb[G]\geq \reg \Kb[K_5\setminus e]=6.
$$
Hence, $G$ is also $K_5\setminus e$-minor-free and
Proposition \ref{known-cases-conjecture} (i)
implies that $\cut^{\Box}(G)$  is normal.
\end{proof}

We recall the following known results from
\cite[Section 6]{Romer-Madani}
that classify connected graphs with very small regularities.

\begin{proposition}
\label{prop-reg-known-cases}
 Let $G=(V,E)$ be a connected graph with $|E|\geq 1$ and $r=\reg\Kb[G]$. Then:
\begin{enumerate}
\item
$r=0$ if and only if $G=C_3=K_3$ or $G=P_1=K_2$.
\item
$r=1$ if and only if $G=C_3\#_0P_1=K_3\#_0K_2$ or $G=P_2$.
\end{enumerate}
\end{proposition}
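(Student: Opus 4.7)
For part (i) I first reduce $r = 0$ to $I_G = 0$: since $G$ is connected, $(I_G)_1 = 0$, so any nonzero $I_G$ has $\reg I_G \geq 2$ and hence $r \geq 1$. Thus $r = 0$ is equivalent to $\Kb[G]$ being a polynomial ring on the $2^{|V|-1}$ cut variables, and since $\Kb[G]$ is a domain of Krull dimension $|E|+1$ this forces the numerical identity $2^{|V|-1} = |E|+1$. Combined with $|E| \leq \binom{|V|}{2}$, the only solutions are $(|V|,|E|) \in \{(2,1),(3,3)\}$, giving exactly $K_2$ and $K_3$. For the converse, each of these is a ring graph, so $\cut^{\Box}(G)$ is normal by Corollary~\ref{ring-graph}, and Theorem~\ref{regularity-main} directly confirms $r = |E|-1 = 0$ for $K_2$ and $r = |E|-3 = 0$ for $K_3$.

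For the forward direction of (ii), I apply three structural consequences of $r = 1$ in sequence. First, $r \leq 4$ and Lemma~\ref{small-regularity-normal} yield that $\cut^{\Box}(G)$ is normal. Second, $\reg I_G = 2$ means $I_G$ is generated in degree at most two, so $G$ is $K_4$-minor-free by Proposition~\ref{known-cases-conjecture}(ii). Then I case-split using Theorem~\ref{regularity-main}. In the bipartite case, $r = |E|-1 = 1$ forces $|E| = 2$, and the only connected bipartite graph with two edges is $G = P_2$. In the case that $G$ contains a triangle, $r = |E|-3 = 1$ forces $|E| = 4$, and a quick enumeration of connected $4$-edge graphs containing a triangle shows that the only option is the triangle with a pendant edge, namely $G = C_3 \#_0 P_1$. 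The remaining case (no triangle but an induced odd cycle) needs one extra ingredient: such an induced cycle has length at least $5$, so $|E| \geq 5$, and Proposition~\ref{lower-bound} together with normality then gives $r \geq |E|-3 \geq 2$, contradicting $r=1$.

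The converse of (ii) follows by plugging $G = P_2$ and $G = C_3 \#_0 P_1$ into Theorem~\ref{regularity-main} (equivalently Theorem~\ref{regularity-ring-graph}): both are ring graphs, so $\cut^{\Box}(G)$ is normal by Corollary~\ref{ring-graph}, and the formulas yield $r = |E|-1 = 1$ and $r = |E|-3 = 1$ respectively. The main obstacle is the triangle-free odd-cycle subcase in the forward direction of (ii), because Theorem~\ref{regularity-main} there supplies only an upper bound on $r$; the key observation rescuing this case is that an induced odd cycle of length $\geq 5$ automatically forces $|E| \geq 5$, so the lower bound in Proposition~\ref{lower-bound} is already strong enough, without any further structural analysis of $K_4$-minor-free graphs.
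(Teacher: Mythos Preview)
Your proof is correct. The paper's own proof is terse: it primarily cites external results from R\"omer--Saeedi Madani and then remarks, without detail, that ``alternatively, the proof follows also from Lemma~\ref{regularity-bipartite} and Theorem~\ref{regularity-main}.'' Your argument is essentially a fully worked-out version of that alternative, with a few differences worth noting.

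For the forward direction of (i) you take a slightly different route than the paper's alternative: rather than invoking normality and Theorem~\ref{regularity-main} directly, you observe that $r=0$ forces $I_G=0$ and then compare $\dim S_G = 2^{|V|-1}$ with $\dim \Kb[G]=|E|+1$. This is a clean self-contained argument and is in fact the content of the cited result \cite[Proposition~3.1]{Romer-Madani}. For (ii) you correctly supply two ingredients the paper's one-line alternative leaves implicit: Lemma~\ref{small-regularity-normal} to get normality (so that Theorem~\ref{regularity-main} applies at all), and Proposition~\ref{lower-bound} to dispose of the triangle-free odd-cycle case, where Theorem~\ref{regularity-main} only gives an upper bound. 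One small remark: your appeal to Proposition~\ref{known-cases-conjecture}(ii) to deduce $K_4$-minor-freeness is never used afterwards and can be dropped.
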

(One should note that the notation for $P_n$ differs by one in the index compared to \cite{Romer-Madani}).
\begin{proof}
 The case $r=0$ follows  from \cite[Proposition 3.1, Proposition 3.2]{Romer-Madani}
and the case $r=1$ is obtained from
\cite[Proposition 3.2, Corollary 6.11]{Romer-Madani}. Alternatively, the proof follows also from Lemma \ref{regularity-bipartite} and Theorem \ref{regularity-main}.
\end{proof}

We are ready to state our second main result of this section.
Extending the results of Proposition \ref{prop-reg-known-cases} we describe all connected graphs whose cut algebras have regularities less than or equal to $4$. Recall that in this case $(I_G)_1=0$. Thus, either $I_G=0$ or the ideal is generated in degrees $\geq 2$.
\begin{theorem}
\label{Theorem-classify-reg-4}
Let $G=(V,E)$ be a connected graph with $|E|\geq 1$ and $r=\reg\Kb[G]$. Then:
\begin{enumerate}
\item
$r=2$ if and only if
$G$ is a tree with $|E|=3$
or
$G$ contains a $C_3$  with $|E|=5$.
\item
$r=3$ if and only if $G$ is a tree with $|E|=4$ or $G=C_4$ or $G=C_5$ or  $G$  contains a $C_3$ with $|E|=6$.
\item
$r=4$ if and only if $G$ is bipartite with $|E|=5$
%or $G=C_4\#_0 P_1$
or $G=C_5\#_0 P_1$ or
$G$ contains a $C_3$ with $|E|=7$.
\end{enumerate}
\end{theorem}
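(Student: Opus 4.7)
The plan is to combine the normality conclusion from Lemma \ref{small-regularity-normal} with the case split of Theorem \ref{regularity-main} to pin down $|E|$ in terms of $r$, and then to enumerate the few remaining graphs.

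First, since $r=\reg\Kb[G]\leq 4$, Lemma \ref{small-regularity-normal} gives that $\cut^{\Box}(G)$ is normal, and Theorem \ref{regularity-main} produces three mutually exclusive cases: (a) $G$ is bipartite with $r=|E|-1$; (b) $G$ is triangle-free and contains an odd induced cycle, with $|E|-3\leq r\leq |E|-2$; (c) $G$ contains a triangle with $r=|E|-3$. For fixed $r\in\{2,3,4\}$, case (a) pins $|E|=r+1$, case (c) pins $|E|=r+3$, and case (b) restricts $|E|\in\{r+2,r+3\}$. Both directions of items (i)--(iii) restricted to cases (a) and (c) then follow immediately from Theorem \ref{regularity-main} by matching the combinatorial invariants (bipartiteness or triangle-containment, and edge count) listed in the theorem statement, so the theorem reduces to a careful study of case (b).

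Case (b) forces an induced $C_{2k+1}$ with $k\geq 2$, hence $|E|\geq 5$. For $r=2$, only $|E|=5$ is admissible, which forces $G=C_5$; but $\reg\Kb[C_5]=3$ by Theorem \ref{regularity-ring-graph}, so case (b) contributes nothing to item (i). For $r=3$, the option $|E|=5$ gives $G=C_5$ with $r=3$, accounting for the ``$G=C_5$'' entry in item (ii). For $r=3$ with $|E|=6$ (and equivalently for $r=4$ with $|E|=6$), I would argue by direct inspection that the only connected triangle-free graph containing an odd induced cycle with six edges is $C_5\#_0 P_1$: since every chord of $C_5$ creates a triangle, the sixth edge must be a pendant at a vertex of $C_5$. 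Theorem \ref{regularity-ring-graph} then gives $\reg\Kb[C_5\#_0 P_1]=4$, so this graph contributes to item (iii) but not to item (ii).

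The hard part will be the final sub-case $r=4$, case (b), $|E|=7$, where I must rule out $r=4$ altogether. Every ring graph there satisfies $r=|E|-2=5$ by Theorem \ref{regularity-ring-graph}, so only non-ring graphs remain; such a $G$ must contain a 2-connected block that is not a cycle, is triangle-free, non-bipartite, and uses at most $7$ edges. A short ear-decomposition argument (either starting from $C_4$ and attaching an ear of length $3$, or starting from $C_5$ and attaching a length-$2$ ear between non-adjacent vertices to avoid producing a triangle) singles out the generalized theta graph $\Theta_{2,2,3}$ as the only candidate, and since this block already uses all $7$ edges we must have $G=\Theta_{2,2,3}$. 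Recognizing $\Theta_{2,2,3}\cong C_5\#_{P_2}C_4$, Lemma \ref{regularity-example} gives $r=5$, completing item (iii). The enumeration of admissible $2$-connected blocks is the most delicate step, but it is tractable because $|E|=7$ and the combination of triangle-freeness with the presence of an odd cycle tightly constrains the possibilities.
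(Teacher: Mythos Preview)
Your argument is essentially correct and arrives at the same classification, but it is organized differently from the paper's proof. The paper proceeds by a straightforward enumeration over $|E|\in\{1,\dots,7\}$, listing all connected graphs with that many edges and computing $r$ for each using Theorem~\ref{regularity-main}, Lemma~\ref{regularity-bipartite}, Lemma~\ref{regularity-odd}, Theorem~\ref{regularity-ring-graph}, and Lemma~\ref{regularity-example}. You instead lead with the trichotomy of Theorem~\ref{regularity-main}, dispose of the bipartite and triangle cases at once, and concentrate the combinatorics into case~(b). Your ear-decomposition identification of $\Theta_{2,2,3}\cong C_5\#_{P_2}C_4$ as the unique triangle-free, non-bipartite, non-ring graph with seven edges is a clean systematic replacement for the paper's subcase analysis of $|E|=7$; the two enumerations yield the same list ($C_7$, $C_5\#_0 P_2$, $C_5\#_0 P_1\#_0 P_1$, and $C_5\#_{P_2}C_4$), all with $r=5$.

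There is, however, one genuine gap in your write-up. You invoke Theorem~\ref{regularity-main} only after deducing normality from Lemma~\ref{small-regularity-normal}, which presupposes $r\leq 4$. That suffices for the forward implication, but for the backward implication (e.g.\ ``$G$ contains a triangle with $|E|=7$ $\Rightarrow$ $r=4$'') you must know a priori that $\cut^{\Box}(G)$ is normal before applying Theorem~\ref{regularity-main}; otherwise the equality $r=|E|-3$ is not available and the argument is circular. The paper closes this loop explicitly by observing that every connected graph with $|E|\leq 7$ is $K_5\setminus e$-minor-free (since $K_5\setminus e$ has nine edges) and hence normal by Proposition~\ref{known-cases-conjecture}(i). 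You should insert this remark before claiming that ``both directions'' of the bipartite and triangle cases follow from Theorem~\ref{regularity-main}.
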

%Theorem \ref{Theorem-classify-reg-4} extends the work in \cite{Romer-Madani}.
%(here also the notation for $P_n$ differs by one in the index compared to \cite{Romer-Madani}).

\begin{proof}
First note that by Lemma \ref{small-regularity-normal} any cut algebra with regularity $\leq 4$ is normal.
It follows from Lemma \ref{lower-bound} that
$
|E|\leq r+3\leq 7.
$
Conversely, any connected graph with $|E|\leq 7$ is $K_5\setminus e$-minor free
and thus by
Proposition \ref{known-cases-conjecture} (i)
has a normal cut polytope.

In conclusion, we have to consider all possible graphs with $|E|\leq 7$
and have to determine their regularity, where we can use the fact that the cut polytopes are normal. We proceed on a case by case basis with respect to
$|E|\geq 1$.

\emph{Case $|E|=1$}:
Here $G$ has to equal $P_1$ with $r=0$ by Lemma \ref{regularity-bipartite}.
This case is included in Proposition \ref{prop-reg-known-cases}(i)
and not relevant here.

\emph{Case $|E|=2$}:
There exists only the possibility $G=P_2$ with $r=1$ by Lemma \ref{regularity-bipartite}.
This case is included in Proposition \ref{prop-reg-known-cases}(ii)
and not relevant here.

\emph{Case $|E|=3$}: If $G$ is a tree, then $r=2$ by Lemma \ref{regularity-bipartite}. The only other possibility is $G=C_3=K_3$
where $r=0$ by Theorem \ref{regularity-main}, which is again
included in Proposition \ref{prop-reg-known-cases}(i)
and not relevant here.

\emph{Case $|E|=4$}: If $G$ contains a triangle $C_3$, then
$G=C_3\#_0 P_1$ and $r=1$ by Theorem \ref{regularity-main}, which
is included in Proposition \ref{prop-reg-known-cases}(ii)
and not relevant here.
Otherwise $G=C_4$ or $G$ is a tree and in both cases $r=3$ by
Lemma \ref{regularity-bipartite}.

\emph{Case $|E|=5$}:
If $G$ is a tree, then $r=4$ by Lemma \ref{regularity-bipartite}. If $G$ contains a triangle $C_3$, then $r=2$ by Theorem \ref{regularity-main}.
Next assume that $G$ has cycles, but does not contain a $C_3$.
Then either $G=C_5$ with $r=3$ by Lemma \ref{regularity-odd},
or the bipartite graph $G=C_4\#_0P_1$ with $r=4$ by Lemma \ref{regularity-bipartite}.

\emph{Case $|E|=6$}:
If $G$ is a tree, then  $r=5$ by Lemma \ref{regularity-bipartite} and this case is not listed in the theorem. If $G$ contains a triangle $C_3$, then  $r=3$ by Theorem \ref{regularity-main}.

Next assume that $G$ has  cycles, but does not contain a $C_3$. The first case is that $G=C_6$
with $r=5$ by Lemma \ref{regularity-bipartite}, which has to be excluded.
If $G\neq C_6$ contains a $C_5$, then this cycle cannot have a chord since
otherwise there would exists also a triangle. Hence, in this case
$G=C_5\#_0P_1$ with $r=4$ by Theorem \ref{regularity-ring-graph}.

The remaining case is that $G$ contains no  $C_3, C_5$ and $C_6$,
but a $C_4$. Then $G$ is bipartite, because it has no cycles of odd length and thus $r=5$ by Lemma \ref{regularity-bipartite}; a case which has again to be excluded from our desired list.

\emph{Case $|E|=7$}:
If $G$ is a tree, then  $r=6$ by Lemma \ref{regularity-bipartite}. Again a subcase we have to exclude from our list. If $G$ contains a triangle then $r=4$ by Theorem \ref{regularity-main}.
Next assume that $G$ has cycles, but does not contain a $C_3$.
The first case is that $G=C_7$
with $r=5$ by Theorem \ref{regularity-ring-graph}, which has again to be excluded.

If $G\neq C_7$ contains a $C_6$, there exist two subcases.
The graph $G$ could contain a chord of $C_6$
and then $G=C_4\#_1C_4$ with $r=6$ by Lemma \ref{regularity-bipartite}.
Or $C_6$ is an induced cycle of $G$. Then
$G=C_6\#_0 P_1$ with $r=6$ by Lemma \ref{regularity-bipartite}.
Both subcases are not relevant for us since $r>4$.

The next subcase is that $G$ contains no $C_3, C_6$ and $C_7$,
but a $C_5$. There cannot exist chords, since otherwise we would obtain
triangles inside $G$. Then the choices for $G$
are
$C_5\#_{P_2}C_4$
or
$C_5\#_0 P_1\#_0P_1$
or
$C_5\#_0P_2$.
In all these cases $r=5$ by Lemma \ref{regularity-example} and
Theorem \ref{regularity-ring-graph}, which all have to be excluded.

The remaining case is that $G$ contains no $C_3$, $C_5$, $C_6$ and $C_7$,
but a $C_4$. Here $G$ is bipartite with  $r=6$ by Lemma \ref{regularity-bipartite}, which is again too large to be included in the desired list.
\end{proof}

\section{Problems}
\label{Section-7}
In this short section we discuss some research questions, which are of interest for future activities. Related to Section \ref{Section-3} and Section \ref{Section-4} the main challenge is of course Conjecture \ref{Conj-StSu}.
%challenges are of course Conjecture \ref{Conj-StSu},
%Conjecture \ref{Conj-seminormal} and Conjecture \ref{Conj-StSu-extended}.

In Section \ref{Section-5} we determined in many cases the canonical module
of cut polytopes like in Theorem \ref{Theorem-canonical-module} in the normal Gorenstein case. Since the case of trees is here included in Case (i) of that theorem, it is a natural question to consider the case of cycles $C_n$ for $n\geq 5$. Cut polytopes of cycles are always normal by Corollary \ref{ring-graph}, but almost never Gorenstein (see Theorem \ref{Ohsugi-Gorenstein}), since only $C_3$ and $C_4$ induce this property for their cut polytopes.
\begin{problem}
Determine the canonical module of $\cut^{\Box}(C_n)$ for any $n\geq 5$.
\end{problem}
Computational evidences using Normaliz \cite{Bruns-Normaliz} suggest
that in the case of cycles $C_n$, $n\geq 5$ the canonical module
is not generated in one degree. However, note that by Theorem \ref{regularity-ring-graph} and using Equation (\ref{Eq-reg})
we know the smallest degree of generators for any $C_n$.

In Section \ref{Section-6} many cases are studied where
one knows the Castelnuovo--Mumford regularity. In particular,
having Theorem \ref{Theorem-classify-reg-4} in mind,
the following would be interesting:
\begin{problem}
Continue the classification of cut polytopes with small regularities.
\end{problem}

\end{document}